\theoremstyle{plain}
\newtheorem*{tw*}{Theorem}
\newtheorem{tw}{Theorem}[section]
\newtheorem*{lemma*}{Lemma}
\newtheorem{lemma}[tw]{Lemma}
\newtheorem{pro}[tw]{Proposition}
\newtheorem*{pro*}{Proposition}
\newtheorem{cor}[tw]{Corollary}
\newtheorem*{cor*}{Corollary}
\theoremstyle{definition}
\newtheorem{df}[tw]{Definition}
\newtheorem*{df*}{Definition}
\theoremstyle{remark}
\newtheorem{rem}[tw]{Remark}
\newtheorem*{rem*}{Remark}
\newtheorem{ex}[tw]{Example}
\newtheorem*{ex*}{Example}
\newcommand{\Hilb}[1]{\operatorname{Hilb}_{{#1}}}
\newcommand{\Perm}{\operatorname{S}}
\newcommand{\vv}{v}
\newcommand{\ch}{\operatorname{ch}}
\newcommand{\id}{\operatorname{id}}
\newcommand{\pt}{\operatorname{pt}}
\newcommand{\res}{\operatorname{res}}
\newcommand{\val}{\operatorname{val}}
\newcommand{\ZZ}{\mathbb{Z}}
\newcommand{\NN}{\mathbb{N}}
\newcommand{\QQ}{\mathbb{Q}}
\newcommand{\CC}{\mathbb{C}}
\newcommand{\Aa}{\mathbb{A}}
\newcommand{\TT}{\mathbb{T}}
\newcommand{\Tq}{{\mathbb{T}_q}}
\def\O{\mathcal{O}}
\def\L{\mathcal{L}}
\newcommand{\E}{\mathcal{E}}
\newcommand\znak[1]{(-1)^{l({#1})-1}}
\newcommand{\Partq}[1]{[{#1}]_q}
\newcommand{\Fr}{\operatorname{Fr}}
\newcommand{\xtto}[1]{\stackrel{#1}{\longrightarrow}}
\newcommand{\xto}[1]{{\xrightarrow{#1}}}
\newcommand{\mono}{\hookrightarrow}
\newcommand{\coh}{\operatorname{H}}
\DeclareMathOperator{\KTh}{K}
\title[The multiplicative structure of the McKay correspondence]{The multiplicative structure of the K-theoretical McKay correspondence for the Hilbert scheme of points in the complex plane}
\author{Jakub Koncki}
\address{Institute of Mathematics of the Polish Academy of Sciences, Poland}
\email{j.koncki@mimuw.edu.pl}
\author{Magdalena Zielenkiewicz}
\address{Institute of Mathematics, University of Warsaw, Poland}
\email{magdaz@mimuw.edu.pl}
\begin{document}

\begin{abstract}
We consider the K-theory of the Hilbert scheme of points in the complex plane, which under McKay correspondence is isomorphic to the space of symmetric functions $\Lambda^n$. 
We prove a formula conjectured by Boissière for the endomorphism of $\Lambda^n$ induced by multiplication by the classes of the Adams powers of the tautological bundle.
 We describe the structure constants for the multiplication on $\Lambda^n$ induced by the tensor product in K-theory.
\end{abstract} 

\maketitle
\section{Introduction}
\noindent
Hilbert schemes, introduced in the 60's \cite{Groth}, quickly became standard objects in algebraic geometry. In this paper, we focus on the Hilbert scheme of $n$ points in the complex plane $\Hilb{n}:=\Hilb{n}(\Aa^2_\CC)$. It is a smooth quasiprojective variety parametrizing zero-dimensional subschemes of $\Aa^2_\CC$ of length $n$. The McKay correspondence forms a bridge between the geometry of $\Hilb{n}$ and the combinatorics of symmetric functions of \hbox{degree $n$.}
\\ \\
The permutation group $\Perm_n$ of $n$ elements acts on the vector space $(\CC^{2})^n$ by permuting the $\CC^2$ factors. This action preserves the standard symplectic form on $\CC^{2n}$. The quotient space is a symplectic singularity in the sense of  \cite{Kal,Bea} and the Hilbert scheme $\Hilb{n}$ is its symplectic resolution. The McKay correspondence predicts a relation between the geometry of $\Hilb{n}$ and the $\Perm_n$-equivariant geometry of $\CC^{2n}$.
Various instances of the McKay correspondence are known. It was studied as an isomorphism of cohomology or K-theory groups e.g. \cite{Kal2,ItoNakajima}, as an equality of elliptic classes \cite{BoLi,Waelder} or as an equivalence of derived categories \cite{BKR}. The K-theoretical correspondence was used by Haiman to prove several combinatorial conjectures \cite{Haiman,Haiman2}. The study of the McKay correspondence (often for different groups and resolutions of singularities) is an active area of research, e.g. \cite{Ohmoto,Krug,AW2,Craw,AW}. 
\\ \\
We focus on the K-theoretical correspondence for the Hilbert scheme $\Hilb{n}(\Aa^2_\CC)$. It provides us with an isomorphism
$$\Theta: \Lambda^n \simeq \KTh_{\Perm_n}(\CC^{2n}) \to \KTh(\Hilb{n}(\Aa^2_\CC))\,,$$
where $\Lambda^n$ is the space of symmetric functions of degree $n$. The codomain of this isomorphism has a product structure given by the tensor product. It induces interesting operations on the space $\Lambda^n$. We are interested in two natural questions:
\begin{enumerate}[A)]
	\item \label{intro1} For a given element $F\in\KTh(\Hilb{n})$, find a compact formula for the map \hbox{$\E_F:\Lambda^n\to\Lambda^n$} such that 
	$\Theta(\E_F(x))=\Theta(x)\otimes F\,,$
	 i.e. the diagram below commutes
	 $$
	 \begin{tikzcd}
	 	{\Lambda^n} &  {\KTh(\Hilb{n})} \\
	 	 {\Lambda^n} & {\KTh(\Hilb{n})}
	 	\arrow["\Theta", from=1-1, to=1-2]
	 	\arrow["{-\otimes F}"', from=1-2, to=2-2]
	 	\arrow["{\E_F}"', from=1-1, to=2-1]
	 	\arrow["\Theta", from=2-1, to=2-2]
	 \end{tikzcd}
	$$
	\item Describe the product structure on $\Lambda^n$ induced from $\KTh(\Hilb{n})$. \label{intro2}
\end{enumerate}
The Hilbert scheme $\Hilb{n}$ comes with a distinguished vector bundle $B_n$ called the tautological vector bundle. Kirwan surjectivity type results imply that the cohomology $\coh^*(\Hilb{n})$ is generated as a ring by the graded parts of the Chern character $\ch^k(B_n)$ \cite{ES} and the K-theory is generated by the Adams powers $\psi^kB_n$ \cite{QuiverKirwan}. 
Therefore, 
it is enough to solve problem $\ref{intro1}$ for $F=\psi^kB_n$. Characteristic classes of the tautological bundles and their relation with the McKay correspondence can also be considered for varieties other than the affine plane, e.g. \cite{BoiNiep,Scala,Krug2}.
\\ \\
A cohomological version of problem $\ref{intro1}$ was solved in \cite{Lehn}. It was used in \cite{LS} to describe the multiplicative structure on $\Lambda^n$ induced from $\coh^*(\Hilb{n})$.
In K-theory, the formula for the map $\E_{B_n}$ was given in \cite{Boissiere}. Under the standard isomorphism $\bigoplus_{n\in\NN}\Lambda_n \simeq \QQ[p_1,p_2,\dots]$ (see Section \ref{s:sym}) we have:
$$
\E_{B_n}=\operatorname{Coeff}\left(t^0;\sum_{r\ge 1}rp_rt^r\cdot \exp\left(\sum_{r\ge 1}\frac{\partial}{\partial p_r}t^{-r}\right)\right)\,.
$$
Moreover, the formula for $F=\psi^k{B_n}$ was conjectured.
\begin{tw*} [{\cite[Conjecture 8.1]{Boissiere}}]
	$$\E_{\psi^k{B_n}}=k^{-1}\cdot\operatorname{Coeff}\left(t^0;\sum_{r\ge 1}rp_rt^r\cdot \exp\left(k\cdot\sum_{r\ge 1}\frac{\partial}{\partial p_r}t^{-r}\right)\right)\,.$$
\end{tw*}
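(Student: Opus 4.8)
The plan is to deduce the theorem from Boissière's formula for $\E_{B_n}$ by a conjugation argument, whose core is the identification of the endomorphism of $\Lambda^n$ induced by the Adams operation $\psi^k$. Since the tensor product is additive and associative, $F\mapsto\E_F$ is a ring homomorphism $\KTh(\Hilb{n})\to\operatorname{End}(\Lambda^n)$, and $\psi^k$ is a ring homomorphism as well; writing $\Psi^k:=\Theta^{-1}\circ\psi^k\circ\Theta$, the relation $\Theta\E_F=(-\otimes F)\Theta$ yields the naturality identity $\E_{\psi^kB_n}=\Psi^k\,\E_{B_n}\,(\Psi^k)^{-1}$ (with $\Psi^k$ invertible once $k$ is inverted). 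Everything thus reduces to computing $\Psi^k$ explicitly.

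The key step will be to prove
$$\Psi^k=k^{\,n-\mathcal{L}},\qquad \mathcal{L}:=\sum_{r\ge1}p_r\frac{\partial}{\partial p_r},$$
that is, $\Psi^k$ acts on the length-$\ell$ part of $\Lambda^n$ by $k^{\,n-\ell}$. I would argue through the Chern character. As $\Hilb{n}$ has an affine paving, $\ch\colon\KTh(\Hilb{n})\otimes\QQ\to\coh^*(\Hilb{n})\otimes\QQ$ is a ring isomorphism, so $\E_F$ becomes multiplication by $\ch(F)$ and $\Psi^k$ becomes the operator induced by $\psi^k$ on cohomology. By the standard relation $\ch(\psi^kB_n)=\sum_{d\ge0}k^{d}\,\ch^d(B_n)$, the operation $\psi^k$ acts on $\coh^{2d}(\Hilb{n})$ by the scalar $k^{d}$. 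It then remains to match the gradings, namely that $\ch\circ\Theta$ carries the length-$\ell$ part of $\Lambda^n$ onto $\coh^{2(n-\ell)}(\Hilb{n})$. Granting this, $\psi^k$ acts on the length-$\ell$ part by $k^{\,n-\ell}$, which is exactly $k^{\,n-\mathcal L}$; combined with naturality, and using that the central factor $k^{n}$ cancels in the conjugation, one obtains the compact identity
$$\E_{\psi^kB_n}=k^{-\mathcal L}\,\E_{B_n}\,k^{\mathcal L}.$$

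Finally I would unfold this identity with Boissière's formula. From $[\mathcal L,p_r]=p_r$ and $[\mathcal L,\tfrac{\partial}{\partial p_r}]=-\tfrac{\partial}{\partial p_r}$ one gets $k^{-\mathcal L}p_r\,k^{\mathcal L}=k^{-1}p_r$ and $k^{-\mathcal L}\tfrac{\partial}{\partial p_r}k^{\mathcal L}=k\,\tfrac{\partial}{\partial p_r}$, so conjugating $\operatorname{Coeff}(t^0;\sum_r rp_rt^r\exp(\sum_r\tfrac{\partial}{\partial p_r}t^{-r}))$ by $k^{-\mathcal L}$ multiplies the creation current by $k^{-1}$ and replaces the exponent by $k\sum_r\tfrac{\partial}{\partial p_r}t^{-r}$, producing precisely the claimed right-hand side. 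The main obstacle is the grading match used above: the K-theoretic correspondence $\Theta$ is not graded on the nose, and one must show that, through $\ch$, the power-sum length on $\Lambda^n$ corresponds to half the cohomological degree on $\Hilb{n}$. I would establish this from the compatibility of $\Theta$ with the Nakajima–Heisenberg structure—under which $p_r$ carries weight $2(r-1)$—anchored in degree zero by $\ch^0(B_n)=\operatorname{rank}B_n=n$, whose multiplication operator is the first term $\sum_r rp_r\tfrac{\partial}{\partial p_r}=n\cdot\mathrm{id}$ of Boissière's operator, and propagated to all degrees by Kirwan surjectivity, since the classes $\ch^d(B_n)$ generate $\coh^*(\Hilb{n})$.
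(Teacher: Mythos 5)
Your opening reduction is sound: $F\mapsto\E_F$ is a ring homomorphism, $\psi^k$ is invertible on $\KTh(\Hilb{n})\otimes\QQ$, and the conjugation identity $\E_{\psi^kB_n}=\Psi^k\,\E_{B_n}\,(\Psi^k)^{-1}$ with $\Psi^k=\Theta^{-1}\psi^k\Theta$ is correct. The formal computation at the end is also correct: conjugating Boissière's $k=1$ operator by $k^{-\mathcal{L}}$, $\mathcal{L}=\sum_r p_r\frac{\partial}{\partial p_r}$, does yield the claimed right-hand side, so the identity $\E_{\psi^kB_n}=k^{-\mathcal{L}}\E_{B_n}k^{\mathcal{L}}$ is in fact \emph{equivalent} to the theorem. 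The gap is the key step $\Psi^k=k^{\,n-\mathcal{L}}$, which is false. That identity says precisely that each $\Theta(p_\lambda)$ is a $\psi^k$-eigenvector of weight $n-l(\lambda)$, i.e.\ that $\ch(\Theta(p_\lambda))$ is \emph{concentrated} in $\coh^{2(n-l(\lambda))}(\Hilb{n})$. What is actually true (\cite[Theorem 1.1]{Boissiere2}, quoted in the remark after Corollary \ref{cor:coh}) is only a one-sided statement: $\ch(\Theta(p_\lambda))$ vanishes below degree $2(n-l(\lambda))$ and is nonzero there, but it has nonzero higher-degree components in general. Concretely, for $n=2$ one has $p_1^2=s_2+s_{11}$, so $\Theta(p_1^2)=[\O_{\Hilb{2}}]+\Theta(s_{11})$; since $\Theta(s_{11})$ is not a rational multiple of the trivial class and $\ch$ is injective, $\ch\Theta(p_1^2)$ has a nonzero component in $\coh^2(\Hilb{2})$, hence $\Psi^k(p_1^2)\neq k^0\,p_1^2$. (Consistently, the length grading cannot be the $\psi^k$-eigenspace decomposition, because the latter would force the product $\odot$ to be graded by length, while Corollary \ref{cor:coh} is only an inequality and, e.g., $p_1^2\odot p_1^2$ has a nonzero $p_2$-coefficient.) This is exactly the pitfall the paper flags: ``the relation between the Chern character and the McKay correspondence is surprisingly complicated.'' Your proposed repair via the Nakajima--Heisenberg grading does not close the gap: that grading is the one carried by the \emph{cohomological} Nakajima isomorphism $\coh^*(\Hilb{n})\cong\Lambda^n$, which differs from $\ch\circ\Theta$ precisely by the lower-order terms at issue, and Kirwan surjectivity says nothing about $\Theta$ being graded.

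What your argument actually requires is not $\Psi^k=k^{n-\mathcal{L}}$ but the weaker assertion that $k^{\mathcal{L}}\Psi^k$ commutes with $\E_{B_n}$; this is not addressed and, being equivalent to the theorem, cannot be obtained by soft arguments. The paper's proof goes a different way: restrict to the coordinate subtorus $\Tq$ (set $t=1$), where $\nabla^q_k=(\omega\E^q_{\psi^kB}\omega)^*$ acquires the multiplicative eigenbasis $v_\lambda=h_\lambda[X/(1-q)]$ with additive eigenvalues, hence satisfies the Leibniz rule; then $\nabla^q_k(p_n)$ is determined from the $k=1$ case by a $q\mapsto q^k$ substitution in the eigenvalues, and the theorem follows by letting $q\to1$ and dualizing. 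If you want to pursue a conjugation-style argument, you would first have to prove a corrected, non-graded description of $\Psi^k$, which is likely no easier than the route the paper takes.
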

Our main result is Theorem \ref{tw:main1}, where we prove an equivalent version of the above formula.  Both the results of \cite{Boissiere} and our proof rely on the study of the torus action.
\\

The standard action of the two dimensional complex torus $\TT=(\CC^*)^2$ on the affine plane induces an action on the Hilbert scheme $\Hilb{n}$. This action was first used to compute the Betti numbers of the Hilbert scheme in \cite{ESBBdec} and its use is a standard method since then, e.g. \cite{Evain2,Smirnov,KZ,SV}. 
The fixed point set $\Hilb{n}^\TT$ is finite, which allows for a combinatorial approach to the equivariant cohomology and K-theory using localization theorems \cite{Tho,AB}. Roughly speaking, any class in K-theory is determined by local contributions coming from the fixed point set. These contributions are governed by the combinatorics of the tangent weights. Therefore, many cohomological statements can be reduced to combinatorial problems. Unfortunately, they are usually complicated. This is the case for the computation of the map $\E_F$.
\\ 

Our idea is to simplify the problem using a reduction to a well-chosen subtorus $\TT_1 \subset \TT$. For a general $\TT_1$, the fixed point $\Hilb{n}^{\TT_1}$ is finite, yet the description of the tangent weights for $\TT_1$ is no easier than for $\TT$.  On the other hand, the combinatorics of the tangent weights visibly simplifies when the fixed point set $\Hilb{n}^{\TT_1}$ is bigger than $\Hilb{n}^{\TT}$. Yet, in such case one needs to consider the geometry of the fixed point set which may be complicated, see \cite{Evain}.
We choose $\Tq \subset \TT$ to be a coordinate subtorus. The fixed point set $\Hilb{n}^{\Tq}$ is much bigger than $\Hilb{n}^{\TT}$, therefore the tangent weights become much easier.  Moreover, $\Hilb{n}^{\Tq}$ is a disjoint union of affine spaces, thus it has trivial geometry (from the cohomological point of view). As expected, after restriction to the subtorus $\Tq$, problem $\ref{intro1}$ simplifies.
\\ \\
Our proof is purely combinatorial but it is based on the geometric intuition presented above. Let $q$ and $t$ be the coordinate characters of $\TT$. The restriction to $\Tq$ corresponds to the substitution $t=1$ in the combinatorial formulas. After this restriction
the adjoint operator $\bigoplus_{n\in\NN}\E^*_{\psi^kB_n}$
satisfies the Leibniz rule, which allows for an effective calculation.
\\ \\
In the last section, we study problem $\ref{intro2}$. We consider the  multiplication \hbox{$\odot:\Lambda^n\times\Lambda^n\to \Lambda^n$} induced by the tensor product in the K-theory ring $\KTh(\Hilb{n})$. This product is determined by its structure constants $c^\mu_{\lambda_1,\lambda_2}\in \QQ$ in the power sum basis, i.e.
$$p_{\lambda_1}\odot p_{\lambda_2}= \sum_{\mu\vdash n} c^\mu_{\lambda_1,\lambda_2} p_\mu\,.$$
In Theorem \ref {tw:main2}, we obtain a formula for the constants $c^\mu_{\lambda_1,\lambda_2}$ as the coefficients of a certain power series. Our method of proof is again the study of the action of the one-dimensional torus $\Tq$.
\\ \\
The structure of this paper is as follows. Sections \ref{s:K}--\ref{s:McKay} provide the necessary background. In Section \ref{s:McKay} we state equivalent formulations of \cite[Conjecture 8.1]{Boissiere} in Theorem \ref{tw:main1} and Proposition \ref{pro:main1}. Section \ref{s:t=1} contains the proof of the main theorem, Theorem \ref{tw:main1}. In Section \ref{s:mult} we study the product $\odot$ on $\Lambda^n$. In Appendix \ref{s:appendix} we include a proof of the equivariant K-theoretic McKay correspondence. It is a well-known result, but we were unable to find a reference.
\\
\paragraph{\bf Acknowledgements:} JK is supported by NCN grant SONATINA 2023/48/C/ST1/00002.
Both authors are grateful to Joachim Jelisiejew, Bruno Stonek and Andrzej Weber for their helpful comments.
JK wants to thank J\"org Sch\"urmann for introducing him to the papers of Boissière.
We would like to thank the anonymous referee for constructive remarks.

\section{K-theory} \label{s:K}
 The standard reference for equivariant K-theory is \cite{CG}. Let $\TT$ be an algebraic torus and $X$ a $\TT$-variety. The equivariant K-theory $\KTh_\TT(X)$ is the $\KTh$-group of the additive category of $\TT$-equivariant vector bundles on $X$. If $X$ is smooth, it is isomorphic to the $\KTh$-group of the category of equivariant coherent sheaves on $X$. All our varieties are quasiprojective. Moreover, we consider only K-theory with rational coefficients, i.e.
	$$\KTh(X):=\KTh(X,\QQ)=\KTh(X,\ZZ)\otimes\QQ\,,\qquad \KTh_\TT(X):=\KTh_\TT(X,\QQ)=\KTh_\TT(X,\ZZ)\otimes\QQ\,.$$

For an algebraic torus $\TT$, let $S_{\TT}\subset \KTh_{\TT}(\pt)$ be the multiplicative system of all nonzero elements. The localised K-theory of a point is $S^{-1}_{\TT}\KTh_{\TT}(\pt)$. For any $\TT$-variety $X$, the ring $\KTh_{\TT}(X)$ is a $\KTh_{\TT}(\pt)$-algebra. We denote the localised K-theory of $X$ by
$$S^{-1}_{\TT}\KTh_{\TT}(X):=\KTh_{\TT}(X)\otimes_{\KTh_{\TT}(\pt)}S^{-1}_{\TT}\KTh_{\TT}(\pt)\,.$$
Analogously, for a finite group $G$ and a $\TT\times G$-variety $X$ we have
$$S^{-1}_{\TT}\KTh_{\TT\times G}(X):=
\KTh_{\TT\times G}(X)\otimes_{\KTh_{\TT}(\pt)}S^{-1}_{\TT}\KTh_{\TT}(\pt)\,.$$
\begin{tw}[Localisation Theorem {\cite[Theorem 2.1]{Tho}}]
	Let $\TT$ be an algebraic torus and $X$ a $\TT$-variety. The inclusion $i:X^\TT\mono X$ induces an isomorphism of rings
	$$i^*:S_{\TT}^{-1}\KTh_\TT(X) \simeq S_{\TT}^{-1}\KTh_\TT(X^\TT)\,.$$
\end{tw}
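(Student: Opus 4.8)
The plan is to establish the theorem through the two classical ingredients of equivariant localisation: a \emph{concentration} statement saying that localised K-theory is supported on the fixed locus, and the invertibility of the K-theoretic Euler class of the normal bundle to $X^\TT$. Throughout I work with the K-theory of $\TT$-equivariant coherent sheaves (G-theory), which carries pushforwards along closed immersions and fits into localisation long exact sequences; for smooth $X$ it coincides with $\KTh_\TT(X)$, and this is the only case needed in the applications of this paper.

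First I would prove the concentration lemma: if a $\TT$-variety $Y$ satisfies $Y^\TT=\emptyset$, then $S_{\TT}^{-1}\KTh_\TT(Y)=0$, and likewise for the higher K-groups. By Noetherian induction, stratifying $Y$ by orbit type and repeatedly applying the localisation sequence for an invariant closed subset and its open complement, it suffices to treat a single orbit $Y=\TT/\TT'$. Here $\KTh_\TT(\TT/\TT')\cong \KTh_{\TT'}(\pt)=R(\TT')$, with the $R(\TT)=\KTh_\TT(\pt)$-module structure induced by the restriction homomorphism $R(\TT)\to R(\TT')$. Since $Y$ has no fixed point, $\TT'$ is a proper subgroup with $\dim\TT'<\dim\TT$, so this restriction has nontrivial kernel; as $R(\TT)$ is (rationally) a Laurent polynomial domain, any nonzero kernel element lies in $S_\TT$ and maps to $0$ in $R(\TT')$, forcing $S_{\TT}^{-1}R(\TT')=0$.

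Next I would feed this into the localisation long exact sequence for the pair consisting of the closed fixed locus $X^\TT$ and its open complement $U=X\setminus X^\TT$. Localisation at $S_\TT$ is exact, and $S_{\TT}^{-1}\KTh_\TT(U)=0$ by the concentration lemma in every degree, so the pushforward $i_*\colon S_{\TT}^{-1}\KTh_\TT(X^\TT)\to S_{\TT}^{-1}\KTh_\TT(X)$ is an isomorphism. To deduce the same for $i^*$, I would invoke the self-intersection formula: for $X$ smooth the immersion $i$ is regular and $i^*i_*$ is multiplication by the K-theoretic Euler class $\lambda_{-1}(N^\vee)=\sum_i(-1)^i[\bigwedge^i N^\vee]$ of the conormal bundle, where $N=N_{X^\TT/X}$. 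Because $X^\TT$ is precisely the fixed locus, the weight decomposition of $N$ over $X^\TT$ involves only nontrivial characters $\chi$, and each corresponding factor of $\lambda_{-1}(N^\vee)$ is a unit multiple of a power of $1-\chi^{-1}\in S_\TT$; hence $\lambda_{-1}(N^\vee)$ becomes invertible after localisation. Thus $i^*i_*$ is an isomorphism, and together with $i_*$ being an isomorphism this gives that $i^*$ is an isomorphism. Since $i^*$ is a ring homomorphism for any morphism of varieties, it is an isomorphism of rings.

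I expect the main obstacle to be the concentration lemma, and specifically the dévissage reducing an arbitrary fixed-point-free $Y$ to a single orbit: this requires the localisation sequence in G-theory together with a careful stratification by orbit type. Once concentration is in place, the invertibility of the Euler class is essentially formal, following from the nonvanishing of the normal weights and the fact that $1-\chi^{-1}$ is a nonzero, hence invertible, element of $S_{\TT}^{-1}R(\TT)$ for every nontrivial character $\chi$.
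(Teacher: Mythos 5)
The paper offers no proof of this statement: it is imported wholesale from Thomason via the citation \cite{Tho}, so the only meaningful comparison is with the standard proof in that reference --- which is essentially what you have reconstructed. Your two-step scheme (a concentration lemma for fixed-point-free actions proved by d\'evissage in equivariant $G$-theory, followed by invertibility of the Euler class $\lambda_{-1}(N^\vee)$ to pass from the $i_*$-statement to the $i^*$-statement) is precisely Thomason's strategy, and with this paper's convention that $S_\TT$ consists of \emph{all} nonzero elements of $\KTh_\TT(\pt)$ with rational coefficients, your kernel argument is correct: a proper closed subgroup $\TT'\subsetneq\TT$ admits a nontrivial character $\chi$ restricting trivially to $\TT'$, so $1-\chi\in S_\TT$ acts by zero on $R(\TT')$ and forces $S_\TT^{-1}R(\TT')=0$.

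Two steps in your sketch need sharpening before this is a complete argument. First, the d\'evissage cannot literally ``stratify $Y$ by orbit type and reduce to a single orbit'': there are in general infinitely many orbits, and an orbit-type stratum is not one orbit, nor does the $R(\TT)$-module structure on its K-theory obviously factor through $R(\TT')$. What makes the Noetherian induction run is Thomason's generic slice theorem: every nonempty invariant reduced $Y$ contains a nonempty invariant open $U\simeq(\TT/\TT')\times Z$ with $\TT$ acting trivially on $Z$, whence $\KTh_\TT(U)\simeq R(\TT')\otimes\KTh(Z)$ and your character argument applies; the localisation sequence (exactness being preserved by inverting $S_\TT$) then handles the closed complement. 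Second, invertibility of $\lambda_{-1}(N^\vee)$ does not follow from nonvanishing of the normal weights alone: for a weight-$\chi$ summand $N_\chi$ of rank $r$ one has $\lambda_{-1}(N_\chi^\vee)=(1-\chi^{-1})^{r}$ only modulo classes of rank zero, and one must invoke nilpotence of rank-zero classes on the finite-dimensional variety $X^\TT$ (equivalently a $\gamma$-filtration or splitting-principle argument) to conclude that the correction terms are harmless after localisation; your phrase ``unit multiple of a power of $1-\chi^{-1}$'' is the conclusion of that argument, not an observation. Finally, note that your route, like Thomason's Euler-class step, requires $X$ smooth (so that $i$ is a regular immersion, $X^\TT$ is smooth, and $i^*i_*$ is multiplication by $\lambda_{-1}(N^\vee)$), whereas the theorem as printed is stated for an arbitrary $\TT$-variety, for which the correct general assertion is the $G$-theoretic one about $i_*$. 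You flag this restriction yourself, and it is harmless here: the paper applies the theorem only to the smooth varieties $\Hilb{n}$ and $\CC^{2n}$.
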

\begin{cor} \label{cor:fixed}
	Let $\TT$ be an algebraic torus and $X$ be a $\TT$-variety.
	\begin{enumerate}
		\item Suppose that the fixed point set $X^\TT$ is finite. Then the fundamental classes of the fixed points form a basis of the localised K-theory $S_{\TT}^{-1}\KTh_\TT(X)$ over $S_{\TT}^{-1}\KTh_\TT(\pt)$.
		\item More generally, suppose that the fixed point set is a disjoint union of affine spaces. Then the fundamental classes of the components of the fixed point set form a basis of the localised K-theory $S_{\TT}^{-1}\KTh_\TT(X)$ over $S_{\TT}^{-1}\KTh_\TT(\pt)$. 
	\end{enumerate}	
\end{cor}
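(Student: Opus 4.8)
The plan is to deduce both statements directly from the Localisation Theorem: the isomorphism $i^*$ transports a manifest basis of the fixed-point side to a basis of $S_{\TT}^{-1}\KTh_\TT(X)$, and the fundamental classes turn out to be this basis up to an invertible \emph{diagonal} change of coordinates. Write $X^\TT=\bigsqcup_{j=1}^m Z_j$ for the decomposition into connected components, so that $Z_j$ is a single point in case (1) and an affine space in case (2), and let $\iota_j\colon Z_j\mono X$ denote the inclusions. Since K-theory sends disjoint unions to direct sums,
$$S_{\TT}^{-1}\KTh_\TT(X^\TT)=\bigoplus_{j=1}^m S_{\TT}^{-1}\KTh_\TT(Z_j)\,.$$

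First I would identify each summand. The torus $\TT$ acts trivially on the fixed component $Z_j$, which is equivariantly isomorphic to an affine space (a point in case (1)); by homotopy invariance of K-theory along the projection $Z_j\to\pt$ one obtains $\KTh_\TT(Z_j)\simeq\KTh_\TT(\pt)$, free of rank one with generator the unit $1_{Z_j}=[\O_{Z_j}]$. Hence $\{1_{Z_j}\}_{j=1}^m$ is a basis of $S_{\TT}^{-1}\KTh_\TT(X^\TT)$ over $S_{\TT}^{-1}\KTh_\TT(\pt)$, and since $i^*$ is an isomorphism of $S_{\TT}^{-1}\KTh_\TT(\pt)$-modules (indeed of rings), $S_{\TT}^{-1}\KTh_\TT(X)$ is itself free of rank $m$.

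It remains to match this basis with the fundamental classes $[\O_{Z_j}]=(\iota_j)_*1_{Z_j}\in\KTh_\TT(X)$, where the identification of $\KTh_\TT(X)$ with equivariant $G$-theory uses the smoothness of $X$. As the $Z_j$ are pairwise disjoint, $i^*[\O_{Z_j}]$ vanishes on every component except the $j$-th, while on $Z_j$ the self-intersection (clean-intersection) formula gives $\iota_j^*(\iota_j)_*1_{Z_j}=\lambda_{-1}(N_j^\vee)$, the K-theoretic Euler class of the normal bundle $N_j$ of $Z_j$ in $X$. Thus, in the basis $\{1_{Z_j}\}$, the images $\{i^*[\O_{Z_j}]\}$ have diagonal transition matrix with entries $\lambda_{-1}(N_j^\vee)$. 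The key point — and the only place where the geometry of the fixed locus enters — is that each $\lambda_{-1}(N_j^\vee)$ is invertible after localisation: every $\TT$-weight occurring in $N_j$ is nonzero because $Z_j$ is a \emph{full} connected component of the fixed locus, so under $\KTh_\TT(Z_j)\simeq\KTh_\TT(\pt)$ the class $\lambda_{-1}(N_j^\vee)$ equals $\prod_w(1-e^{-w})$, a nonzero element of the domain $\KTh_\TT(\pt)$, and therefore lies in $S_{\TT}$. The diagonal change of basis is consequently invertible, so $\{i^*[\O_{Z_j}]\}$ is again a basis, and applying $(i^*)^{-1}$ shows that the fundamental classes $\{[\O_{Z_j}]\}_j$ form a basis of $S_{\TT}^{-1}\KTh_\TT(X)$. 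I expect the main obstacle to be the bookkeeping for case (2): one must verify that the equivariant K-theory of the trivially acted-upon affine-space components collapses to that of a point, and that the Euler-class computation and its invertibility go through for positive-dimensional, rather than pointlike, fixed components.
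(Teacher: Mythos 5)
Your proposal is correct, and in fact the paper offers no proof of this corollary at all --- it is stated as an immediate consequence of the quoted Localisation Theorem. Your argument (transport the basis of unit classes $1_{Z_j}$ of the fixed components through the isomorphism $i^*$, and observe that the transition matrix to the restricted fundamental classes $i^*[\O_{Z_j}]$ is diagonal with entries the K-theoretic Euler classes $\lambda_{-1}(N_j^\vee)$, which lie in $S_\TT$ because a full fixed component has no zero weights in its normal bundle) is exactly the standard justification, and you correctly flag the two points the paper leaves implicit: the smoothness of $X$ needed for the fundamental classes to live in $\KTh_\TT(X)$, and the collapse of the equivariant K-theory of a trivially-acted-on affine component to $\KTh_\TT(\pt)$.
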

For a $\TT$-variety $X$, the K-theory $\KTh_\TT(X)$ is a $\lambda$-ring with $\lambda$-operations induced by taking exterior powers, see \cite{lambda1} for  $\lambda$-rings. The theory of $\lambda$-rings provides us with Adams operations
$$
\psi^m: \KTh_{\TT}(X) \to \KTh_{\TT}(X)\,,
$$
for every $m\in \NN$. They were first introduced in the non-equivariant setting in \cite{Adams}.
Let us recall some basic properties of the Adams operations.
\begin{pro}
	Let $\TT$ be an algebraic torus and $X$ be a $\TT$-variety.
	\begin{enumerate}
		\item  The Adams operation $\psi^m: \KTh_{\TT}(X) \to \KTh_{\TT}(X)$ is a ring homomorphism.
		\item  For an equivariant line bundle $\L \in \operatorname{Pic}^\TT(X)$ we have $$\psi^m(\L)=\L^{\otimes m}\,.$$
		\item The Adams operations are natural transformations with respect to pullbacks, i.e. for any $\TT$-equivariant map $f:Y\to X$ we have
		$$f^*\circ \psi^m =\psi^m \circ f^*\,.$$
	\end{enumerate}	  
\end{pro}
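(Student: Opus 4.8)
The statement collects the three basic properties of the Adams operations, all of which are formal consequences of the $\lambda$-ring structure on $\KTh_\TT(X)$; the plan is to prove them using Newton's identity together with the splitting principle. Recall that $\psi^m$ is defined by the recursion
$$\psi^m(x)=\sum_{i=1}^{m-1}(-1)^{i-1}\lambda^i(x)\,\psi^{m-i}(x)+(-1)^{m-1}m\,\lambda^m(x)\,,$$
so that $\psi^m$ is a universal polynomial with integer coefficients in $\lambda^1,\dots,\lambda^m$ (the $m$-th Newton polynomial). In particular $\psi^1=\lambda^1=\id$.

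I would begin with part (2), which is the most concrete and feeds into the rest. For a line bundle $\L$ one has $\lambda^1(\L)=\L$ and $\lambda^i(\L)=0$ for $i\ge 2$, since the higher exterior powers of a rank-one bundle vanish. Substituting into the recursion collapses it to $\psi^m(\L)=\L\cdot\psi^{m-1}(\L)$ for $m\ge 2$, and with the base case $\psi^1(\L)=\L$ an induction yields $\psi^m(\L)=\L^{\otimes m}$.

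For part (1), additivity is immediate from the $\lambda$-ring axiom $\lambda_t(x+y)=\lambda_t(x)\lambda_t(y)$: passing to the logarithmic derivative of the generating series turns this product into the sum $\psi^m(x+y)=\psi^m(x)+\psi^m(y)$. Multiplicativity is where I would invoke the splitting principle. After pulling back along a suitable equivariant flag bundle (an injection on K-theory) one may assume that $x=\sum_i\ell_i$ and $y=\sum_j m_j$ are sums of line elements. Then, using additivity and part (2),
$$\psi^m(xy)=\psi^m\Big(\sum_{i,j}\ell_i m_j\Big)=\sum_{i,j}(\ell_i m_j)^{\otimes m}=\Big(\sum_i\ell_i^{\otimes m}\Big)\Big(\sum_j m_j^{\otimes m}\Big)=\psi^m(x)\,\psi^m(y)\,,$$
and $\psi^m(1)=1$ since $1$ is itself a line element; this gives the ring homomorphism property.

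Finally, part (3) follows from the naturality of the exterior power operations under pullback: for any equivariant $f\colon Y\to X$ one has $f^*\Lambda^i E=\Lambda^i f^*E$, hence $f^*\circ\lambda^i=\lambda^i\circ f^*$. Since $f^*$ is a ring homomorphism and $\psi^m$ is the universal Newton polynomial in the $\lambda^i$, it commutes with $f^*$. The main point requiring care is the justification of the splitting principle in the equivariant category --- namely the existence of an equivariant flag bundle whose pullback is injective on $\KTh_\TT$ --- after which every identity reduces to the case of line elements. Alternatively one can bypass geometry entirely and verify all three identities inside the free $\lambda$-ring on the $\lambda$-operations, where they become identities of symmetric functions.
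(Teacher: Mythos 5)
Your argument is correct and is the standard one; note that the paper itself states this proposition without proof, simply recalling it as a formal consequence of the $\lambda$-ring structure on $\KTh_\TT(X)$ with references to the literature on $\lambda$-rings and to Adams' original paper. Your derivation via the Newton recursion, the vanishing of higher exterior powers of a line bundle, and the splitting principle is exactly how these facts are established in the references, and you correctly isolate the only point requiring genuine care in the equivariant setting: the equivariant splitting principle, i.e.\ that pullback to the $\TT$-equivariant flag bundle of an equivariant vector bundle is injective on $\KTh_\TT$ (which follows from the equivariant projective bundle theorem). One small cleanup worth making explicit: the splitting-principle computation of $\psi^m(xy)$ applies when $x$ and $y$ are classes of actual equivariant vector bundles; the extension to arbitrary elements of $\KTh_\TT(X)$, which are differences of such classes, then follows from the additivity of $\psi^m$ established in the first half of your part (1).
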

\begin{rem} \label{rem:Adams}
	Suppose that a vector bundle $V$ is a direct sum of line bundles
	$$V=L_1\oplus\dots\oplus L_n\,.$$
	Then its Adams power is of the form
	$$\psi^kV=L^{\otimes k}_1\oplus\dots\oplus L^{\otimes k}_n\,.$$
	In general, a vector bundle cannot be decomposed as a direct sum of line bundles. Still the above formula holds if we use the Chern roots of $V$ instead of the line bundles $L_i$. More precisely, let $V$ be any vector bundle and $\alpha_1,\dots\alpha_k$ its Chern roots. Then
	$$\psi^kV=\alpha_1^{k}+\dots+\alpha^{k}_n \in \KTh_\TT(X)\,.$$
\end{rem}
\section{Combinatorics}
\subsection{Partitions}
A partition is a non-increasing sequence of positive integers, i.e.
$$
\lambda=(\lambda_1,\dots,\lambda_k)\,,\qquad \lambda_1\ge\lambda_2\ge\dots\ge\lambda_k >0\,.
$$
We say that $s(\lambda):=\sum_{i=1}^{k} \lambda_i$ is the sum of the partition $\lambda$ or that $\lambda$ is a partition of $s(\lambda)$, and write $\lambda\vdash s(\lambda)$. We call $l(\lambda):=k$ the length of the partition $\lambda$. A partition can be presented in an alternative way $\lambda=(1^{a_1(\lambda)},2^{a_2(\lambda)},\dots)$ where
$$a_j(\lambda):=|\{i\in \NN|\lambda_i=j\}|\,.$$
In further considerations we use the numbers
\begin{equation} \label{w:4}
	z_{\lambda}=\prod_{i=1}^\infty\left( a_i(\lambda)!\cdot i^{a_i(\lambda)}\right)=\prod_{i=1}^\infty a_i(\lambda)!\cdot \prod_{i=1}^{l(\lambda)}\lambda_i\,.
\end{equation}
\begin{df}
	Let $\lambda$ be a partition of $n$ and $\mu$ a partition of $m$.
	The partition $\lambda\cdot\mu$ of $m+n$ is defined by
	$$a_i(\lambda\cdot\mu)=a_i(\lambda)+a_i(\mu)\,. $$
\end{df}
\begin{ex}
	For $\lambda=(5,2,1)$ and $\mu=(3,2,1)$ we obtain 
	$\lambda\cdot\mu=(5,3,2,2,1,1)$.
\end{ex}
\subsection{Symmetric functions} \label{s:sym}
We recall some properties of symmetric functions. For a full exposition and proofs see \cite{Macdonald}. \\
Let $\Perm_m$ be the permutation group of $m$ elements. Denote by $\QQ[x_1,\dots, x_m]_n^{\Perm_m}$ the vector space of invariant homogenous polynomials of degree $n$. We consider a diagram
$$
\QQ[x_1]_n \leftarrow
\QQ[x_1,x_2]^{\Perm_2}_n \leftarrow
\QQ[x_1,x_2,x_3]^{\Perm_3}_n \leftarrow
\dots \leftarrow
\QQ[x_1,\dots, x_m]_n^{\Perm_m} \leftarrow \dots
$$
where the maps are defined by setting the last coordinate to $0$. The $\QQ$-vector space of symmetric functions of degree $n$, denoted $\Lambda^n$, is the inverse limit of this diagram, i.e.
%Let  be the $\QQ$-vector space of symmetric functions of degree $n$ and , i.e.
$$\Lambda^n:=\lim\limits_{\leftarrow} (\QQ[x_1]_n \leftarrow
\QQ[x_1,x_2]^{\Perm_2}_n \leftarrow
\dots \leftarrow
\QQ[x_1,\dots, x_m]_n^{\Perm_m} \leftarrow \dots)
%\,,\qquad
%\Lambda=\bigoplus_{n\in \NN} \Lambda^n\,.
$$
We consider also the ring of symmetric functions $\Lambda=\bigoplus_{n\in \NN} \Lambda^n$. Denote by $\Lambda_q^n,\Lambda_{q,t}^n,\Lambda_q,\Lambda_{q,t}$ the spaces of symmetric functions with additional parameters
$$
\Lambda_q^n=\Lambda^n\otimes_\QQ \QQ(q)\,, \qquad
\Lambda_{q,t}^n=\Lambda^n\otimes_\QQ \QQ(q,t)\,, \qquad
\Lambda_q=\Lambda\otimes_\QQ \QQ(q)\,, \qquad \Lambda_{q,t}=\Lambda\otimes_\QQ \QQ(q,t)\,.
$$
The space $\Lambda^n$ has several standard bases indexed by partitions of $n$, such as the power sum functions $p_{\lambda}$, the symmetric monomial functions $m_\lambda$, the full symmetric functions $h_\lambda$ and the Schur functions $s_\lambda$.
\begin{rem}
	The basses $h_\lambda$ and $p_{\lambda}$ are multiplicative, i.e.
	$$
	h_\lambda=\prod_{i=1}^{l(\lambda)} h_{\lambda_i}\,, \qquad
	p_\lambda=\prod_{i=1}^{l(\lambda)} p_{\lambda_i}\,.
	$$
	This implies that for any partitions $\lambda$ and $\mu$ we have $h_{\lambda\cdot\mu}=h_\lambda\cdot h_\mu$ and $p_{\lambda\cdot\mu}=p_\lambda\cdot p_\mu$. \\
	The bases $m_\lambda$ and $s_\lambda$ are not multiplicative.
\end{rem} 
\begin{df} \label{df:w}
	\begin{itemize}
		\item The involution $\omega:\Lambda^n\to \Lambda^n$ is defined by $\omega (p_\lambda)=(-1)^{l(\lambda)}p_\lambda$. We extend it $\QQ(q,t)-$linearly to an involution $\omega:\Lambda_{q,t}^n\to \Lambda_{q,t}^n$.
		\item There is an inner product $\langle -,-\rangle$ on the space $\Lambda^n$  defined by one of the equivalent conditions:
		$$\langle p_\lambda,p_\mu\rangle=\delta_{\lambda,\mu}z_\lambda\,, \qquad \langle s_\lambda,s_\mu\rangle=\delta_{\lambda,\mu}
		\,, \qquad \langle m_\lambda,h_\mu\rangle=\delta_{\lambda,\mu}
		\,.$$
		We extend it to a $\QQ(q,t)$-bilinear form on $\Lambda^n_{q,t}$.
	\end{itemize}
\end{df}
\begin{rem}
	In the literature there are two conventions concerning the involution $\omega$. One either defines $\omega (p_\lambda)=(-1)^{l(\lambda)}p_\lambda$ (e.g. \cite{Boissiere}), or $\omega (p_\lambda)=(-1)^{n+l(\lambda)}p_\lambda$ (e.g. \cite{Macdonald,Haiman_surv}). As we prove a conjecture originating from \cite{Boissiere}, we use the first convention to be consistent with the notation therein. This convention has a nice interpretation in terms of plethystic substitutions, see Example \ref{ex:plethysm} (1).
\end{rem}
The ring of symmetric functions is a polynomial ring generated by the power sums $\{p_n\}_{n\in \NN}$, i.e.
\begin{equation} \label{w:3}
	\Lambda=\QQ[p_1,p_2,\dots]\,.
\end{equation}
This allows us to define derivations $\frac{\partial}{\partial p_n}:\Lambda\to\Lambda$ in the standard way.
\begin{df}
	For a partition $\mu=(\mu_1,\dots,\mu_l)$, the map $\partial_{\mu}:\Lambda\to\Lambda$ is defined by
	$$\partial_{\mu}=\frac{\partial}{\partial p_{\mu_1}}\circ\frac{\partial}{\partial p_{\mu_2}}\circ\dots\circ \frac{\partial}{\partial p_{\mu_l}}\,.$$
	This map may be extended $\QQ(q,t)$-linearly to a map $\partial_{\mu}:\Lambda_{q,t}\to\Lambda_{q,t}$.
\end{df}
The following widely known fact can be easily checked using the power sum basis.
\begin{pro} \label{pro:adjoint}
	Let $f$ and $g$ be symmetric functions.
	\begin{enumerate}
		\item Let $m$ be a natural number.  Multiplication by $\frac{p_m}{m}$ and differentiation $\frac{\partial}{\partial p_m}$ are adjoint, i.e.
		$$\left\langle \frac{f\cdot p_m}{m},g\right\rangle= \left\langle f,\frac{\partial}{\partial p_m}g\right\rangle\,. $$
		\item Let $\lambda$ be a partition. Multiplication by $\frac{p_\lambda}{\prod_{i=1}^{l(\lambda)}\lambda_i}$ and differentiation $\partial_\lambda$ are adjoint, i.e.
		$$\left\langle \frac{f\cdot p_\lambda}{\prod_{i=1}^{l(\lambda)}\lambda_i},g\right\rangle= \left\langle f,\partial_\lambda g\right\rangle\,. $$
	\end{enumerate}
\end{pro}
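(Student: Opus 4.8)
The plan is to verify everything on the power sum basis $\{p_\lambda\}$, using that $\langle-,-\rangle$ is bilinear and that both multiplication by $p_m/m$ and the derivation $\frac{\partial}{\partial p_m}$ are linear. By \eqref{w:3} the $p_\lambda$ span $\Lambda$, so it suffices to establish part (1) for $f=p_\lambda$ and $g=p_\mu$. Since the inner product only pairs classes of equal degree nontrivially, both sides vanish unless $\deg g = \deg f + m$; so I take $\lambda\vdash n$ and $\mu\vdash n+m$.

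For the left-hand side I would use that $p_\lambda\cdot p_m = p_{\lambda\cdot(m)}$, where $(m)$ is the one-part partition, giving $\left\langle \frac{p_\lambda p_m}{m},p_\mu\right\rangle = \frac{1}{m}\,\delta_{\lambda\cdot(m),\mu}\,z_{\lambda\cdot(m)}$ from Definition \ref{df:w}. For the right-hand side I would write $p_\mu=\prod_i p_i^{a_i(\mu)}$ and differentiate in the polynomial ring \eqref{w:3}, obtaining $\frac{\partial}{\partial p_m}p_\mu = a_m(\mu)\,p_{\mu'}$, where $\mu'$ denotes $\mu$ with one part equal to $m$ deleted (and the expression is $0$ when $a_m(\mu)=0$). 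Hence $\left\langle p_\lambda,\frac{\partial}{\partial p_m}p_\mu\right\rangle = a_m(\mu)\,\delta_{\lambda,\mu'}\,z_\lambda$.

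Both sides are nonzero under exactly the same condition, namely $\mu=\lambda\cdot(m)$ (equivalently $\mu'=\lambda$), so the whole statement collapses to a single numerical identity: when $\mu=\lambda\cdot(m)$ one must check that $\frac{1}{m}z_\mu = a_m(\mu)\,z_\lambda$. This is where the actual content sits, and it follows immediately from the product formula \eqref{w:4} for $z$: since $a_m(\mu)=a_m(\lambda)+1$ while $a_i(\mu)=a_i(\lambda)$ for $i\neq m$, the quotient $z_\mu/z_\lambda$ telescopes to $m\cdot a_m(\mu)$. I expect this $z$-recursion to be the only genuine step; everything around it is bookkeeping, so there is no serious obstacle.

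For part (2) I would induct on $l(\lambda)$. The base case $l=1$ is exactly part (1) with $m=\lambda_1$. For the inductive step, write $p_\lambda = p_{\lambda''}\,p_{\lambda_l}$ with $\lambda''=(\lambda_1,\dots,\lambda_{l-1})$, apply part (1) (to the function $\frac{f\cdot p_{\lambda''}}{\prod_{i<l}\lambda_i}$) to move $\frac{\partial}{\partial p_{\lambda_l}}$ across the pairing, and then apply the inductive hypothesis to $\partial_{\lambda''}$ acting on $\frac{\partial}{\partial p_{\lambda_l}}g$. Here I would use that the derivations $\frac{\partial}{\partial p_j}$ commute, so the order in $\partial_\lambda$ is immaterial, and the normalising factor $\prod_{i=1}^{l(\lambda)}\lambda_i$ factors correctly at each step once part (1) is in hand.
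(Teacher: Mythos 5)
Your proof is correct and follows exactly the route the paper indicates (the paper offers no detailed proof, only the remark that the fact "can be easily checked using the power sum basis," which is precisely your computation). The key numerical identity $\tfrac{1}{m}z_{\lambda\cdot(m)}=a_m(\lambda\cdot(m))\,z_\lambda$ and the induction for part (2) are both sound.
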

\subsection{Plethystic substitution}
The plethystic substitution describes the $\lambda$-ring structure on the ring of symmetric functions, see e.g. \cite{plethysm} for a detailed description. Thanks to the presentation \eqref{w:3} of $\Lambda$, in order to define a $\QQ$-algebra map $\varphi:\Lambda^n\to\Lambda^n$ it is enough to specify the values $\varphi(p_n)$.
\begin{df}
	For an element $A\in \Lambda$, the plethystic substitution $f \to f[A]$ is the $\QQ$-algebra morphism $-[A]:\Lambda\to\Lambda$ such that
	$$p_k[A]=A(x_1^k,x_2^k,\dots)\,.$$
	For $A\in \Lambda_{q,t}$, the plethystic substitution is the $\QQ(q,t)$-algebra morphism $-[A]:\Lambda_{q,t}\to\Lambda_{q,t}$ such that $p_k[A]=A(x_1^k,x_2^k,\dots,q^k,t^k)\,.$
\end{df}
\noindent In plethystic notation we write $X$ for the sum of all variables: $X=x_1+x_2+\dots$
\begin{df} \label{df:partq}
	Let $n$ be a natural number and $\lambda$ a partition. We write
	$$[n]_q:=\frac{q^n-1}{q-1}\in\QQ(q)\,,\qquad \Partq{\lambda}:=\prod_{i=1}^{l({\lambda})} [{\lambda}_i]_q\in\QQ(q)\,.$$
\end{df}
\begin{ex} \label{ex:plethysm}
	Let $f,g\in \Lambda_q$ be symmetric functions.
	\begin{enumerate}
		\item We have $f[X]=f$ and $f[-X]=\omega(f)$.
		\item Let $A(q)\in \QQ(q)$ be a non-zero rational function. The substitution $f\to f[A(q)\cdot X]$ is a $\QQ(q)$-linear isomorphism of $\Lambda^n_q$ with inverse \hbox{$f\to f[\frac{X}{A(q)}]$.}
		\item Let $A(q)\in \QQ(q)$ be a rational function. The substitution $f\to f[A(q)\cdot X]$ is self-adjoint, i.e.
		$$\langle f,g[A(q)\cdot X]\rangle= \langle f[A(q)\cdot X],g\rangle $$
		\item For any partition $\lambda$, we have
		$$\langle p_\lambda,f[X(q-1)]\rangle=\langle p_\lambda[X(q-1)],f\rangle=(q-1)^{l(\lambda)}\Partq{\lambda}\cdot\langle p_\lambda,f\rangle \,.$$
	\end{enumerate}
\end{ex}

\subsection{Macdonald Polynomials}
Macdonald polynomials form an important basis of the space $\Lambda_{q,t}$, see \cite[Chapter 6]{Macdonald}. To a partition $\lambda$ of $n$ we associate a symmetric function  $\tilde{H}_\lambda(q,t) \in \Lambda_{q,t}^{n}$ called the modified Macdonald polynomial, see
{\cite[Proposition 2.1.1]{Haiman}}
for a definition. The functions $\tilde{H}_\lambda$ are in fact polynomials in $q,t$. It was proved in \cite[Theorem 2]{Haiman} that their coefficients in the Schur basis are polynomials with non-negative integer coefficients, i.e. 
$$\tilde{H}_\mu(q,t)= \sum_{\lambda \vdash n}\tilde{K}_{\lambda\mu}(q,t)s_\lambda \in \Lambda_{q,t}^{n}\,, \qquad  \tilde{K}_{\lambda\mu}(q,t) \in \NN[q,t] \,.$$
Several restrictions of these polynomials are known, see e.g. \cite[Section 3.5.3]{Haiman_surv}. For us the restriction to $t=1$ will be important.
\begin{pro}[{\cite[p. 364, Example 7]{Macdonald}, see also
		\cite[Proposition 3.5.8]{Haiman_surv} for this formulation}]
	\label{pro:t=1}
	We have
	$$\tilde{H}_\lambda(q,1)=c_\lambda(q) \cdot \prod_{i=1}^{l(\lambda)} h_{\lambda_i}\left[\frac{X}{1-q}\right] \in \Lambda_q^{n}\,,$$
	where $c_\lambda(q)=\prod_{i=1}^{l(\lambda)}\prod_{j=1}^{\lambda_i} (1-q^j)\in \QQ(q)$.
\end{pro}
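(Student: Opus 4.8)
The plan is to prove the factorization formula for $\tilde H_\lambda(q,1)$ by exploiting the known product structure of the modified Macdonald polynomials under the specialization $t=1$, together with the plethystic characterization of the $h_k$. I would first recall that the modified Macdonald polynomials are defined via triangularity conditions with respect to the parameters $q$ and $t$, and that the specialization $t=1$ collapses one of these conditions, forcing the polynomials to become multiplicative in a suitable sense. Concretely, I expect that $\tilde H_\lambda(q,1)$ depends only on the parts $\lambda_i$ and not on their arrangement, so that it decomposes as a product over the parts of $\lambda$.

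The key reduction is to the single-row case: it suffices to show that for a one-row partition $(k)$ we have $\tilde H_{(k)}(q,1) = c_{(k)}(q)\cdot h_k\!\left[\frac{X}{1-q}\right]$ with $c_{(k)}(q)=\prod_{j=1}^{k}(1-q^j)$, and then to argue that the general case multiplies these together. For the single-row computation I would use the defining axioms of $\tilde H_{(k)}$ (or the explicit hook formula available for one-row shapes) and the plethystic identity describing how $h_k$ transforms under the substitution $X\mapsto \frac{X}{1-q}$. The transformation $f\mapsto f\left[\frac{X}{1-q}\right]$ is the plethysm by $A(q)=\frac{1}{1-q}$ acting on $X$, which by Example \ref{ex:plethysm}(2) is an invertible $\QQ(q)$-linear endomorphism of $\Lambda^n_q$, so the claimed formula is well-posed and the constant $c_\lambda(q)$ is exactly the normalization needed to match the standard integrality conventions for $\tilde H_\lambda$.

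The multiplicativity step is where the structure of the argument really lives. I would verify that both sides of the claimed identity are multiplicative with respect to the partition product $\lambda\cdot\mu$ of Definition \ref{df:w}'s neighboring definition: the right-hand side is manifestly a product of the single-part factors $h_{\lambda_i}\!\left[\frac{X}{1-q}\right]$ times the product constant $c_\lambda(q)=\prod_i c_{(\lambda_i)}(q)$, and the crucial claim is that $\tilde H_\lambda(q,1)$ factors the same way. This should follow from the known compatibility of the modified Macdonald polynomials with the $t=1$ specialization, namely that at $t=1$ the Macdonald operators degenerate so that eigenfunctions associated to a partition become products of eigenfunctions associated to its parts.

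The main obstacle I anticipate is establishing this multiplicativity of $\tilde H_\lambda(q,1)$ rigorously from first principles, since the modified Macdonald polynomials are not themselves defined multiplicatively and the factorization is a genuinely special feature of the $t=1$ slice. Rather than rederive the full Macdonald theory, I would lean on the cited references — the formula appears as Example 7 on p.~364 of \cite{Macdonald} and is restated as \cite[Proposition 3.5.8]{Haiman_surv} — and present the proof as a verification that the transformed quantity satisfies the characterizing properties of $\tilde H_\lambda(q,1)$, checking the triangularity and normalization axioms in the specialized setting. The genuinely computational part, matching the leading coefficient to pin down $c_\lambda(q)$, I expect to be routine once the multiplicative structure is in place.
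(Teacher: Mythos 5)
The paper offers no proof of this proposition: it is quoted as a known result, with the $t=1$ factorization attributed to Macdonald (p.~364, Example~7, stated there for the integral forms $J_\lambda$) and to Haiman's survey for this plethystic formulation. So there is no in-paper argument to compare yours against, and citing the literature --- which is what your plan ultimately reduces to --- is exactly what the authors do.

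Judged as a self-contained proof, however, your proposal has a genuine gap, and it sits precisely at the step you yourself flag as ``where the structure of the argument really lives.'' The multiplicativity of $\tilde{H}_\lambda(q,1)$ over the parts of $\lambda$ \emph{is} the content of the proposition; it does not follow from the triangularity axioms ``collapsing'' at $t=1$ (the dominance-order triangularity conditions survive the specialization and do not by themselves force a product structure), nor from the eigenfunction characterization, since one would still have to show that the relevant specialized Macdonald operators act multiplicatively. The reduction to one-row shapes and the normalization $c_{(k)}(q)=\prod_{j=1}^{k}(1-q^j)$ are indeed routine once factorization is granted, but the factorization itself needs an actual argument --- for instance the $t=1$ specialization of the Pieri coefficients, or the identity $J_\lambda(x;q,1)=\prod_{i} J_{(\lambda_i)}(x;q,1)$ for the integral forms followed by the plethystic change of variables relating $J_\lambda$ to $\tilde{H}_\lambda$. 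Deferring that step to the cited sources is acceptable and matches the paper's treatment, but then the rest of your write-up is scaffolding around a citation rather than a proof; if a genuine proof is wanted, the multiplicativity step must be supplied.
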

\begin{ex}
	For $\lambda=(4,2,2,1)$ we have
	$$
	c_\lambda(q)=
	\prod_{j=1}^{4} (1-q^j)\cdot \prod_{j=1}^{2} (1-q^j) \cdot \prod_{j=1}^{2} (1-q^j)\cdot \prod_{j=1}^{1} (1-q^j)=
	(1-q^4)(1-q^3)(1-q^2)^3(1-q)^4\,.
	$$
\end{ex}
\begin{df}\label{df:v}
	Let $\lambda$ be a partition of $n$. We use the  notation
	$$v_\lambda:= h_{\lambda}\left[\frac{X}{1-q}\right] \in \Lambda_q^{n}\,.$$
\end{df}
\begin{pro}
	\label{pro:v}
	\begin{enumerate}
		\item Let $\lambda$ and $\mu$ be partitions. We have
		$$v_{\lambda\cdot\mu}=v_{\lambda}\cdot v_{\mu}\,.$$
		\item The set $\{v_{\lambda}\}_{\lambda \vdash n}$ is a $\QQ(q)$-basis of $\Lambda^n_q$.
	\end{enumerate}
\end{pro}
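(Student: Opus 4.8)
The plan is to deduce both parts directly from the properties of plethystic substitution recorded in Example \ref{ex:plethysm}, since the functions $v_\lambda$ are obtained from the $h_\lambda$ by a single plethystic substitution.

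For part (1), I would first unwind the definitions. Writing $h_\lambda = \prod_i h_i^{a_i(\lambda)}$ and using $a_i(\lambda\cdot\mu) = a_i(\lambda)+a_i(\mu)$, the product of partitions is built exactly so that $h_{\lambda\cdot\mu} = h_\lambda\cdot h_\mu$. Because $-[\tfrac{X}{1-q}]$ is a $\QQ(q)$-algebra morphism it is in particular multiplicative, and applying it to this identity yields
$$v_{\lambda\cdot\mu} = (h_\lambda\cdot h_\mu)\left[\tfrac{X}{1-q}\right] = h_\lambda\left[\tfrac{X}{1-q}\right]\cdot h_\mu\left[\tfrac{X}{1-q}\right] = v_\lambda\cdot v_\mu.$$

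For part (2), the key step is to recognize $\tfrac{X}{1-q} = \tfrac{1}{1-q}\cdot X$ as a substitution of the form $A(q)\cdot X$ with $A(q) = \tfrac{1}{1-q}$ a non-zero rational function. Example \ref{ex:plethysm}(2) then guarantees that $f\mapsto f[\tfrac{X}{1-q}]$ is a $\QQ(q)$-linear automorphism of $\Lambda^n_q$, with inverse $f\mapsto f[(1-q)X]$. I would then invoke the standard fact that $\{h_\lambda\}_{\lambda\vdash n}$ is a $\QQ$-basis of $\Lambda^n$, hence a $\QQ(q)$-basis of $\Lambda^n_q$; the image of a basis under an automorphism is again a basis, and that image is precisely $\{v_\lambda\}_{\lambda\vdash n}$.

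I do not expect a genuine obstacle here: the content is entirely formal manipulation of the definitions. The only points demanding a moment's care are checking that the product of partitions really produces $h_{\lambda\cdot\mu}=h_\lambda h_\mu$ (rather than, say, a sum), and casting $\tfrac{X}{1-q}$ in the form required by Example \ref{ex:plethysm}(2) so that the automorphism statement applies verbatim.
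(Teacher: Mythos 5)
Your proof is correct and follows essentially the same route as the paper's: part (1) via multiplicativity of the plethystic substitution applied to $h_{\lambda\cdot\mu}=h_\lambda h_\mu$, and part (2) by invoking Example \ref{ex:plethysm} (2) to see that $f\mapsto f[\frac{X}{1-q}]$ is a $\QQ(q)$-linear automorphism carrying the basis $\{h_\lambda\}_{\lambda\vdash n}$ to $\{v_\lambda\}_{\lambda\vdash n}$. No gaps.
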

\begin{proof}
	The proposition follows from analogous properties of the polynomials $h_\lambda$. We have
	$$
	v_{\lambda\cdot\mu}=
	h_{\lambda\cdot\mu}\left[\frac{X}{1-q}\right]=
	(h_{\lambda}\cdot h_{\mu})\left[\frac{X}{1-q}\right]=
	h_{\lambda}\left[\frac{X}{1-q}\right]\cdot h_{\mu}\left[\frac{X}{1-q}\right]=
	v_\lambda\cdot v_\mu\,.
	$$
	For the second point, note that the set $\{h_{\lambda}\}_{\lambda \vdash n}$ is a basis of $\Lambda^n_q$. The plethystic substitution $f\to f[\frac{X}{1-q}]$ is a $\QQ(q)$-linear automorphism of $\Lambda^n_q$, cf. Example \ref{ex:plethysm} (2).
	Therefore the set $\{v_{\lambda}\}_{\lambda \vdash n}$ is also a basis.
\end{proof}
\subsection{Frobenius characteristic map} \label{s:Frob}
The conjugacy classes of the elements of the permutation group $\Perm_n$ are indexed by the partitions of $n$. Therefore, the representation ring $\KTh_{\Perm_n}(\pt)$ is a $\QQ$-vector space with a basis corresponding to partitions. The space $\Lambda^n$ is a $\QQ$-vector space of the same dimension. There is a standard choice of isomorphism between these spaces called the Frobenius characteristic map $\Fr\colon \KTh_{\Perm_n}(\pt) \to \Lambda^n$, see e.g. \cite[Section 7.18]{Stanley}. It is defined by 
$$\Fr([V])=\frac{1}{n!}\sum_{\sigma\in \Perm_n}\chi_V(\sigma) p_{\lambda(\sigma)}\,,$$
where $\chi_V$ is the character of the representation $V$ and $\lambda(\sigma)$ is the partition induced by the permutation $\sigma$.
\noindent Let $\Tq$ be a one-dimensional torus and $\TT$ be a two-dimensional torus. We have isomorphisms of vector spaces
$$\begin{tikzcd}
	{S^{-1}_{\Tq}\KTh_{\Tq\times\Perm_n}(\pt)} & {\KTh_{\Perm_n}(\pt)\otimes_\QQ S^{-1}_{\Tq}\KTh_{\Tq}(\pt)} & {\KTh_{\Perm_n}(\pt)\otimes_\QQ \QQ(q)} & {\Lambda^n_q}
	\arrow["\simeq", from=1-1, to=1-2]
	\arrow["\simeq", from=1-2, to=1-3]
	\arrow["{\Fr\otimes \id_{\QQ(q)}}", from=1-3, to=1-4]
\end{tikzcd},$$
$$\begin{tikzcd}
	{S^{-1}_{\TT}\KTh_{\TT\times\Perm_n}(\pt)} & {\KTh_{\Perm_n}(\pt)\otimes_\QQ S^{-1}_{\TT}\KTh_{\TT}(\pt)} & {\KTh_{\Perm_n}(\pt)\otimes_\QQ \QQ(q,t)} & {\Lambda^n_{q,t}}
	\arrow["\simeq", from=1-1, to=1-2]
	\arrow["\simeq", from=1-2, to=1-3]
	\arrow["{\Fr\otimes \id_{\QQ(q,t)}}", from=1-3, to=1-4]
\end{tikzcd}.$$
We call both these maps the Frobenius map and denote them by $\Fr$. We use this notation only in Definition \ref{df:McKay} and Proposition \ref{pro:sn}.

\section{Hilbert Scheme} \label{s:Hilb}
Let $\Hilb{n} := \Hilb{n}(\Aa_{\CC}^2)$ be the Hilbert scheme of $n$ points in the complex plane $\Aa_{\CC}^2$. We recall some standard facts about $\Hilb{n}$, for reference see \cite{Nak1}. 
The Hilbert scheme $\Hilb{n}$ parametrises zero-dimensional subschemes of $\Aa_{\CC}^2$ of length $n$. It is a smooth irreducible quasi-projective variety of dimension~$2n$, \cite{Fogarty}. Set-theoretically, $\Hilb{n}$ can be identified with the set of ideals $I \lhd \CC[x,y]$ such that the $\CC$-vector space $\CC[x,y]/I$ has dimension $n$, i.e.,
$$\Hilb{n}:=\{I \lhd \CC[x,y]|\, \dim \CC[x,y]/I=n\}\,.$$
Let $\TT=(\CC^{\ast})^2$ be the algebraic torus. The standard $\TT$--action on $\Aa_{\CC}^2$
induces an action of $\TT$ on the Hilbert scheme $\Hilb{n}$. The fixed points of this action are monomial ideals contained in $\Hilb{n}$. They correspond bijectively to partitions of $n$. To a partition $\lambda=(\lambda_1,\dots,\lambda_l)$ we associate an ideal
$$
\langle x^{\lambda_1}\,, x^{\lambda_2}y\,,\dots\,, x^{\lambda_l}y^{l-1}\,, y^l \rangle \in \Hilb{s(\lambda)}^\TT\,.
$$
We do not distinguish in the notation between the partition $\lambda$ and the associated fixed point.
\noindent Denote by $q$ and $t$ the coordinate characters of the torus $\TT$. We consider the one-dimensional coordinate subtorus $\Tq\subset \TT$ acting on the first coordinate. 
Then
$$\KTh_\TT(\pt) = \QQ[q^{\pm 1},t^{\pm 1}]\,,\qquad \KTh_\Tq(\pt)=\QQ[q^{\pm 1}]\,.$$
The restriction map $\KTh_\TT(\pt)\to \KTh_\Tq(\pt)$ is given by the substitution $t=1$. \\

The fixed point set $\Hilb{n}^\Tq$ is a disjoint union of affine spaces, \cite[Section 1.2]{Evain}.  Corollary \ref{cor:fixed} gives the following description of the localised K-theory of $\Hilb{n}$.
\begin{pro} \label{pro:localised}
	\begin{enumerate}
		\item The fundamental classes of the $\TT$-fixed points form a basis of the localised $\KTh$-theory $S^{-1}_\TT\KTh_\TT(\Hilb{n})$.
		\item The fundamental classes of the components of the fixed point set $\Hilb{n}^\Tq$ form a basis of the localised $\KTh$-theory $S^{-1}_\Tq\KTh_\Tq(\Hilb{n})$.
	\end{enumerate}
\end{pro}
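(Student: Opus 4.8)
The plan is to apply Corollary \ref{cor:fixed} directly to $X = \Hilb{n}$, verifying in each case only the relevant hypothesis on the fixed locus; the two statements are then immediate specialisations of the two parts of that corollary.

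For part (1), first I would recall from Section \ref{s:Hilb} that the $\TT$-fixed points of $\Hilb{n}$ are exactly the monomial ideals, and that these are in bijection with the partitions of $n$. Since the set of partitions of $n$ is finite, the fixed locus $\Hilb{n}^\TT$ is finite. This is precisely the hypothesis of part (1) of Corollary \ref{cor:fixed}, which then yields verbatim that the fundamental classes of the $\TT$-fixed points form a basis of $S^{-1}_\TT\KTh_\TT(\Hilb{n})$ over $S^{-1}_\TT\KTh_\TT(\pt)$.

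For part (2), I would invoke the structural result of Evain \cite[Section 1.2]{Evain}, which asserts that the fixed locus $\Hilb{n}^{\Tq}$ of the one-dimensional coordinate subtorus is a disjoint union of affine spaces. This is exactly the hypothesis of part (2) of Corollary \ref{cor:fixed}, so that corollary gives that the fundamental classes of the connected components of $\Hilb{n}^{\Tq}$ form a basis of $S^{-1}_{\Tq}\KTh_{\Tq}(\Hilb{n})$ over $S^{-1}_{\Tq}\KTh_{\Tq}(\pt)$.

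There is no genuine obstacle here: the mathematical content is entirely carried by the two cited inputs (the combinatorial description of $\Hilb{n}^\TT$ and Evain's description of $\Hilb{n}^{\Tq}$) together with Corollary \ref{cor:fixed}, which itself rests on the Localisation Theorem. The only point deserving a word of care is that $\Hilb{n}$ is smooth and quasiprojective, as recorded at the start of Section \ref{s:Hilb}; this is what guarantees that the Localisation Theorem applies and that the fundamental classes of the (smooth) fixed components are well-defined, so that the hypotheses of Corollary \ref{cor:fixed} are legitimately met in both cases.
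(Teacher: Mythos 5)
Your proposal matches the paper exactly: the proposition is stated as an immediate consequence of Corollary \ref{cor:fixed}, using the finiteness of $\Hilb{n}^\TT$ (monomial ideals indexed by partitions) for part (1) and Evain's description of $\Hilb{n}^{\Tq}$ as a disjoint union of affine spaces for part (2). Nothing is missing.
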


\begin{df}
	Let $F$ be a $\TT$-equivariant vector bundle on the Hilbert scheme $\Hilb{n}$ and $\lambda$ a partition of $n$. We write
	$$ F_\lambda \in \KTh^\TT(\pt)\simeq \QQ[q^{\pm 1},t^{\pm 1}]\,,$$
	for the class of the restriction of $F$ to the fixed point $\lambda$.
	We write
	$$ F^q_\lambda \in \KTh^{\Tq}(\pt)\simeq \QQ[q^{\pm 1}] $$
	for the restriction to $\Tq$-equivariant K-theory, i.e. $F^q_\lambda= (F_\lambda)_{t=1}$.
\end{df}
The Hilbert scheme $\Hilb{n}$ comes with a universal flat family
$$\mathcal{F}_n \subset \Hilb{n} \times \Aa^2_\CC\,,$$
whose fibre over a point $Z\in \Hilb{n}$ is $Z$, treated as a subscheme in $\Aa^2_\CC$. The push-forward of the sheaf of regular functions on $\mathcal{F}_n$ along the projection $\pi: \mathcal{F}_n \to \Hilb{n}$ defines (a sheaf of sections of) a rank $n$ vector bundle on $\Hilb{n}$,
$$B_n =\pi_{\ast} \mathcal{O}_{\mathcal{F}_n} \in \operatorname{Vect}(\Hilb{n})\,.$$
It is called the \emph{tautological vector bundle} on $\Hilb{n}$. We are interested in the Adams powers of the tautological bundle, due to the following instance of Kirwan surjectivity.
\begin{tw}[{\cite[Corollary 1.4]{QuiverKirwan}}] \label{tw:Kirwan}
	The classes of the bundles $\psi^kB_n$ for $k\in \NN$ generate $\KTh(\Hilb{n})$ as a $\QQ$-algebra.
\end{tw}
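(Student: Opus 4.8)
The plan is to deduce the $K$-theoretic statement from the analogous statement in cohomology by transporting it through the Chern character. The key external input is the theorem of Ellingsrud--Strømme \cite{ES}, quoted in the introduction, that $\coh^*(\Hilb{n},\QQ)$ is generated as a $\QQ$-algebra by the graded pieces $\ch^j(B_n)$ of the Chern character of the tautological bundle. Since the $\TT$-action endows $\Hilb{n}$ with a finite paving by affine cells \cite{ESBBdec}, the ring $\KTh(\Hilb{n})=\KTh(\Hilb{n},\QQ)$ is a finite-dimensional $\QQ$-algebra, $\coh^*(\Hilb{n},\QQ)$ is finite-dimensional and even, and the Chern character $\ch\colon \KTh(\Hilb{n})\to \coh^*(\Hilb{n},\QQ)$ is an isomorphism of $\QQ$-algebras (no completion is needed because both sides have equal finite rank and the isomorphism can be checked on the associated graded of the cell filtration). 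It therefore suffices to prove that the classes $\ch(\psi^k B_n)$, $k\in\NN$, generate $\coh^*(\Hilb{n},\QQ)$.

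First I would record the standard compatibility of Adams operations with the Chern character: for every class $F$,
$$\ch(\psi^k F)=\sum_{j\ge 0} k^{j}\,\ch^{j}(F)\,,$$
which is immediate for line bundles from $\ch(\L^{\otimes k})=e^{k c_1(\L)}$ and extends by the splitting principle. In particular $\ch(\psi^k B_n)=\sum_{j\ge 0}k^{j}\ch^{j}(B_n)$, and since $\coh^*(\Hilb{n},\QQ)$ vanishes above some finite cohomological degree $2N$, this is a finite sum for every $k$.

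The core is then a routine linear-algebra step. Evaluating the identity for $k=1,\dots,N+1$ writes the vector $(\ch(\psi^k B_n))_k$ as the image of $(\ch^{j}(B_n))_j$ under the Vandermonde matrix $(k^{j})$, which is invertible over $\QQ$. Inverting it expresses each $\ch^{j}(B_n)$ as a $\QQ$-linear combination of the $\ch(\psi^k B_n)$, so every generator $\ch^{j}(B_n)$ lies in the $\QQ$-subalgebra generated by $\{\ch(\psi^k B_n)\}_{k}$. As these generators already generate all of $\coh^*(\Hilb{n},\QQ)$ by \cite{ES}, that subalgebra is the whole ring; transporting the conclusion back through the Chern-character isomorphism shows that $\{\psi^k B_n\}_{k\in\NN}$ generate $\KTh(\Hilb{n})$.

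The main obstacle is not the Vandermonde argument but the two inputs it rests on: the Ellingsrud--Strømme generation theorem and the fact that the Chern character is a ring isomorphism over $\QQ$ (which is where the affine paving is used). An alternative, more intrinsic route avoids cohomology altogether: realise $\Hilb{n}$ as the GIT quotient of the ADHM quiver data by $GL_n$, so that $B_n$ is associated to the standard representation $V$; then $K$-theoretic Kirwan surjectivity makes $R(GL_n)=\KTh_{GL_n}(\pt)\to\KTh(\Hilb{n})$ surjective. Since $\psi^k V=p_k$ and $R(GL_n)=\QQ[p_1,\dots,p_n][e_n^{-1}]$ with $e_n$ a polynomial in the $p_k$, one only needs the image of $e_n^{-1}$ to lie in the algebra generated by the $\psi^k B_n$; this holds because $[\det B_n]=1+\nu$ with $\nu$ nilpotent (the positive-degree part of $\coh^*(\Hilb{n})$ is nilpotent), so $[\det B_n]^{-1}$ is a polynomial in $[\det B_n]$. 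I would present the Chern-character argument as the main line, since it isolates all the content in the single cohomological input \cite{ES}.
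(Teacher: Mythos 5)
Your main line is correct, but note that the paper does not prove this statement at all: it is imported verbatim from \cite[Corollary 1.4]{QuiverKirwan}, a K-theoretic Kirwan surjectivity theorem for quiver varieties, so your argument is a genuinely independent derivation rather than a variant of anything in the paper. Your route instead reduces the claim to the older cohomological generation theorem of Ellingsrud--Str\o mme \cite{ES} via the Chern character, and the two nonformal inputs are exactly the ones you flag: the identity $\ch(\psi^k F)=\sum_{j\ge 0}k^j\ch^j(F)$ (standard, by the splitting principle) and the fact that $\ch\colon \KTh(\Hilb{n})\to \coh^*(\Hilb{n},\QQ)$ is a ring isomorphism despite $\Hilb{n}$ being non-compact. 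The latter does hold, and the affine paving \cite{ESBBdec} is the right reason: for a smooth quasiprojective variety with an algebraic cell decomposition, $\KTh$ and the Chow ring are free of rank the number of cells, the cycle map to cohomology is an isomorphism, and singular Riemann--Roch (Baum--Fulton--MacPherson, after multiplying by the invertible Todd class) makes $\ch$ a $\QQ$-algebra isomorphism; your ``associated graded of the cell filtration'' remark is the correct skeleton of this, though in a final write-up you should cite Riemann--Roch explicitly rather than leave it as an aside. The Vandermonde inversion with nodes $k=1,\dots,N+1$ is then airtight, and transporting back through $\ch$ finishes the proof. As for what each approach buys: yours isolates all the content in the single classical input \cite{ES} and is specific to $\Hilb{n}(\Aa^2_\CC)$, whereas the cited result comes with a uniform proof for a broad class of quiver varieties. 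One caution: your ``alternative, more intrinsic route'' is not an independent proof --- K-theoretic Kirwan surjectivity for the ADHM presentation \emph{is} the main theorem of \cite{QuiverKirwan}, and its Corollary 1.4 is deduced from it essentially by your $R(GL_n)$ and determinant-inversion argument, so that paragraph reproduces the cited proof rather than replacing it; moreover its nilpotence step ($[\det B_n]=1+\nu$ with $\nu$ nilpotent) quietly relies on the same cellularity/Chern-character facts as your main line.
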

The tautological bundle $B_n$ admits a natural $\TT$-linearisation. Its fibre over a fixed point $I\in \Hilb{n}^\TT$ is isomorphic to the $n$-dimensional vector space $\CC[x,y]/I$ as a $\TT$-representation. Let $\lambda=(\lambda_1,\dots,\lambda_l)$ be a partition of $n$, then
	$$(B_n)_{\lambda}=\sum^{l(\lambda)}_{i=1} \sum_{j=1}^{\lambda_i} t^{i-1}q^{j-1}\,.$$
	This allows for an explicit computation of the restrictions $(\psi^kB_{n})_{\lambda}$ for any $k\in \NN$, see Remark \ref{rem:Adams}:
	$$
	\big(\psi^kB_{n}\big)_{\lambda}=\psi^k\big((B_{n})_{\lambda}\big)=
	\sum^{l(\lambda)}_{i=1} \sum_{j=1}^{\lambda_i} (t^{i-1}q^{j-1})^k\,.
	$$
	After restriction to the one dimensional torus $\TT_q$ we obtain
	$$
	(B_n)^q_{\lambda}=\sum^{l(\lambda)}_{i=1} \sum_{j=1}^{\lambda_i} q^{j-1}
	\,, \qquad
	(\psi^kB_{n})^q_{\lambda}=\sum^{l(\lambda)}_{i=1} \sum_{j=1}^{\lambda_i} (q^{j-1})^k\,.
	$$
The last formula in the above example implies the following fact.
\begin{cor} \label{cor:Adams}
	Let $\lambda$ be a partition of $n$, $\mu$ be a partition of $m$ and $k\in \NN$ be a natural number. We have
	$$ (\psi^kB_{n+m})^q_{\lambda\cdot\mu}=(\psi^kB_{n})^q_{\lambda}+(\psi^kB_{m})^q_{\mu}\,. $$
\end{cor}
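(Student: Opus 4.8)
The plan is to reduce everything to the explicit restriction formula recorded just above the statement, namely
$$(\psi^kB_{n})^q_{\lambda}=\sum_{i=1}^{l(\lambda)}\sum_{j=1}^{\lambda_i}q^{k(j-1)}\,,$$
and to exploit the fact that passing to $\Tq$ (the substitution $t=1$) has erased all dependence on the row index $i$. The point is that the inner sum $\sum_{j=1}^{\lambda_i}q^{k(j-1)}=[\lambda_i]_{q^k}$ depends only on the size of the $i$-th part, not on its position in the staircase. Thus the whole expression is a sum, over the parts of $\lambda$, of a quantity depending only on the size of each part.

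First I would rewrite the formula by collecting parts of equal size, using the multiplicities $a_s(\lambda)=|\{i:\lambda_i=s\}|$:
$$(\psi^kB_{n})^q_{\lambda}=\sum_{s\ge 1}a_s(\lambda)\cdot[s]_{q^k}\,,\qquad [s]_{q^k}:=\sum_{j=1}^{s}q^{k(j-1)}\,.$$
This reformulation is the crux: it expresses the fixed-point restriction purely in terms of the multiplicity data of $\lambda$, which is exactly the data used to define the product partition.

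Then I would invoke the definition of $\lambda\cdot\mu$, characterised by $a_s(\lambda\cdot\mu)=a_s(\lambda)+a_s(\mu)$ for all $s$. Substituting into the displayed reformulation and splitting the sum gives
$$(\psi^kB_{n+m})^q_{\lambda\cdot\mu}=\sum_{s\ge 1}\big(a_s(\lambda)+a_s(\mu)\big)[s]_{q^k}=(\psi^kB_{n})^q_{\lambda}+(\psi^kB_{m})^q_{\mu}\,,$$
which is the desired identity.

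There is essentially no serious obstacle here: the entire content is the observation that setting $t=1$ destroys the two-dimensional staircase geometry and collapses the restriction into an additive invariant of the multiset of parts. The only thing to watch is notational hygiene — I must keep the geometric-summation index distinct from the integers $n,m$ that $\lambda,\mu$ partition, which is why I renamed the part-size index to $s$ above.
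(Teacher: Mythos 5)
Your proof is correct and is exactly the argument the paper intends: the paper simply asserts that the corollary follows from the restriction formula $(\psi^kB_{n})^q_{\lambda}=\sum_{i}\sum_{j=1}^{\lambda_i}q^{k(j-1)}$, and your observation that this expression depends only on the multiplicities $a_s(\lambda)$ (since setting $t=1$ removes the row index), combined with $a_s(\lambda\cdot\mu)=a_s(\lambda)+a_s(\mu)$, is the natural way to fill in that one-line justification. No gaps.
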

\begin{rem}
	The above fact is false if we restrict to $q=1$ instead of $t=1$. In this case, we have
	$$ (\psi^kB_{n+m})^t_{(\lambda^\top\cdot\mu^\top)^\top}=(\psi^kB_{n})^t_{\lambda}+(\psi^kB_{m})^t_{\mu}\,, $$
	where $\lambda^\top$ denotes the transpose partition.
\end{rem}

\section{McKay correspondence} \label{s:McKay}
\subsection{Additive correspondence}
The permutation group $\Perm_n$ acts on the vector space $\CC^{2n}=(\CC^2)^{n}$ by permutation of the $\CC^2$ factors. The Hilbert-Chow morphism \hbox{$\Hilb{n} \to \CC^{2n}/\Perm_n$} is a resolution of the quotient singularity. In \cite[Definition 3.2.4]{Haiman}, the  isospectral Hilbert scheme $X_n$ is defined as the reduced fiber product
\begin{equation} \label{diagram2}
	\begin{tikzcd}
		{X_n} & {\CC^{2n}} \\
		{\Hilb{n}} & {\CC^{2n}/\Perm_n}
		\arrow["\rho", from=1-1, to=1-2]
		\arrow["p"', from=1-1, to=2-1]
		\arrow[from=2-1, to=2-2]
		\arrow[from=1-2, to=2-2]
	\end{tikzcd}.
\end{equation}
We use the names of the maps from the above diagram. The McKay correspondence relates the derived category of the Hilbert scheme with the $\Perm_n$-equivariant derived category of the affine space $\CC^{2n}$.
\begin{pro}[{\cite[Corollary 1.3]{BKR}}]
	The composition
	$$R\rho_*\circ p^*\colon D(\Hilb{n}) \to D^{\Perm_n}(\CC^{2n})$$
	is an equivalence of categories.
\end{pro}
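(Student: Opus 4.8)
The plan is to realise this statement as an instance of the Bridgeland--King--Reid theorem \cite{BKR} applied to the data $M=\CC^{2n}$, $G=\Perm_n$, $X=\CC^{2n}/\Perm_n$ and the crepant resolution $Y=\Hilb{n}$, with the isospectral Hilbert scheme $X_n$ of diagram \eqref{diagram2} serving as the correspondence kernel. Concretely, one checks that $\Hilb{n}$ together with $X_n$ represents the $\Perm_n$-Hilbert scheme of $\CC^{2n}$ (Nakamura's moduli of $\Perm_n$-clusters), so that $\O_{X_n}$ is the universal cluster sheaf and $\Phi:=R\rho_*\circ p^*$ is the associated Fourier--Mukai functor. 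Granting this, the Bridgeland--King--Reid criterion reduces the claim to two hypotheses: the Gorenstein condition, that $\omega_M$ is trivial as a $\Perm_n$-equivariant sheaf; and the fibre-dimension bound $\dim(Y\times_X Y)\le \dim Y+1$.

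The Gorenstein condition is immediate. The permutation action of $\Perm_n$ on $\CC^{2n}=(\CC^2)^n$ preserves the standard symplectic form, hence factors through $Sp(2n,\CC)\subset SL(2n,\CC)$ and acts trivially on the top exterior power $\omega_{\CC^{2n}}$. This is the infinitesimal counterpart of the fact, used throughout the paper, that $\Hilb{n}$ is a crepant (indeed symplectic) resolution of $X$.

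For the dimension bound I would argue along the Hilbert--Chow morphism $\pi\colon\Hilb{n}\to X=\operatorname{Sym}^n\CC^2$, stratifying $X$ by partition type. The stratum $X_\mu$ of configurations with distinct support points of multiplicities $\mu_1,\dots,\mu_{l(\mu)}$ has dimension $2\,l(\mu)$, and by Brian\c{c}on's theorem the fibre of $\pi$ over a point of $X_\mu$ is a product of punctual Hilbert schemes of total dimension $\sum_i(\mu_i-1)=n-l(\mu)$. Hence over $X_\mu$ the fibre product $Y\times_X Y$ has dimension $2\,l(\mu)+2\bigl(n-l(\mu)\bigr)=2n$, independent of $\mu$; taking the maximum over all strata gives $\dim(Y\times_X Y)=2n\le 2n+1=\dim Y+1$, which is precisely the required inequality.

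With both hypotheses in hand the Bridgeland--King--Reid argument applies: the dimension bound guarantees, via Bridgeland's criterion, that the objects $\Phi(\O_y)$ for $y\in\Hilb{n}$ form an orthonormal spanning class, so $\Phi$ is fully faithful; since $\Phi$ admits both adjoints and the canonical sheaves on the two sides are ($\Perm_n$-equivariantly) trivial, full faithfulness upgrades to the desired equivalence $D(\Hilb{n})\xto{\sim}D^{\Perm_n}(\CC^{2n})$, with inverse the Fourier--Mukai functor of the transposed kernel. I expect the main obstacle to be not the dimension estimate, which is elementary once one has Brian\c{c}on's theorem, but rather the input needed to identify $\O_{X_n}$ as a genuine universal cluster sheaf: this amounts to the flatness of $p\colon X_n\to\Hilb{n}$ together with the regular-representation structure on its fibres, i.e.\ Haiman's $n!$-theorem. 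I would therefore isolate this flatness statement as the single deep ingredient and cite it, after which the proof is the formal verification sketched above.
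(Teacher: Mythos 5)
The paper offers no proof of this statement: it is quoted verbatim from \cite[Corollary 1.3]{BKR}, so there is nothing internal to compare your argument against. Your reconstruction is nevertheless a faithful account of how that corollary is actually proved: the symplectic (hence $SL$) action gives the $\Perm_n$-equivariant triviality of $\omega_{\CC^{2n}}$; the stratification of $\operatorname{Sym}^n\CC^2$ by partition type together with Brian\c{c}on's theorem gives $\dim(Y\times_X Y)=2n\le\dim Y+1$; and the one genuinely deep input --- that $\Hilb{n}\cong\Perm_n\text{-Hilb}(\CC^{2n})$ with $X_n$ as the universal cluster family, equivalently the flatness of $p$ with regular-representation fibres --- is Haiman's $n!$ theorem \cite{Haiman}, which you correctly isolate as the ingredient to cite rather than reprove. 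I see no gaps.
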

\noindent As a consequence, we have an isomorphism of $\KTh$-groups.
\begin{cor}
	The map
	$$\rho_*\circ p^*\colon \KTh(\Hilb{n}) \to \KTh_{\Perm_n}(\CC^{2n})$$
	is an isomorphism of $\QQ$-vector spaces.
\end{cor}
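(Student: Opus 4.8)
The statement is the decategorification of the equivalence in the preceding proposition, so the plan is to apply the Grothendieck-group functor $K_0(-)$ to $R\rho_*\circ p^*$ and to check that the induced map is precisely the one appearing in the statement.

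First I would identify both sides of the claimed isomorphism with Grothendieck groups of derived categories. Since $\Hilb{n}$ is smooth, its K-theory of vector bundles agrees with its G-theory, and both equal $K_0\big(D(\Hilb{n})\big)$; thus $\KTh(\Hilb{n})\simeq K_0\big(D(\Hilb{n})\big)$. In the equivariant setting, $\CC^{2n}$ is smooth and $\Perm_n$ is finite, so likewise $\KTh_{\Perm_n}(\CC^{2n})\simeq K_0\big(D^{\Perm_n}(\CC^{2n})\big)$. Both identifications are standard; see e.g. \cite{CG}.

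The categorical input is that any exact equivalence of triangulated categories induces an isomorphism on Grothendieck groups: an exact functor sends distinguished triangles to distinguished triangles, hence respects the relations $[B]=[A]+[C]$ attached to triangles $A\to B\to C\to A[1]$ and descends to a group homomorphism on $K_0$; applying the same construction to a quasi-inverse produces a two-sided inverse. Applying $K_0(-)$ to the equivalence $R\rho_*\circ p^*$ therefore yields an isomorphism $K_0\big(D(\Hilb{n})\big)\simeq K_0\big(D^{\Perm_n}(\CC^{2n})\big)$.

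It then remains to match this induced map with $\rho_*\circ p^*$. On Grothendieck groups the derived pullback descends to the K-theoretic pullback $p^*$, and the proper pushforward $R\rho_*$ descends to $[\mathcal{G}]\mapsto\sum_i(-1)^i[R^i\rho_*\mathcal{G}]$, i.e.\ the K-theoretic $\rho_*$; by functoriality of $K_0(-)$ their composite is exactly the map in the statement. Tensoring with $\QQ$ then preserves the isomorphism, giving the asserted isomorphism of $\QQ$-vector spaces. The only point requiring care is that all functors stay within the bounded coherent (equivariant) derived categories: $\rho$ is proper, so $R\rho_*$ preserves boundedness and coherence, and the finiteness of $\Perm_n$ keeps the equivariant category well behaved. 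Note that the possible singularity of the isospectral scheme $X_n$ never enters, since we use only the composite equivalence between the two smooth sides and never form $K_0$ of $X_n$ itself.
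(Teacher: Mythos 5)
Your argument is correct and coincides with the paper's (implicit) one: the corollary is obtained exactly by decategorifying the BKR equivalence, identifying $\KTh$ of the two smooth sides with $K_0$ of their (equivariant) derived categories, and observing that the induced map on Grothendieck groups is $\rho_*\circ p^*$. Your remark that the possible singularity of $X_n$ is harmless because only the composite between the two smooth ends is used is a sensible point of care, consistent with how the paper treats it.
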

\begin{cor}\label{cor:eq}
	The maps
	$$\rho_*\circ p^*\colon S^{-1}_{\TT}\KTh_{\TT}(\Hilb{n}) \to S^{-1}_{\TT}\KTh_{\TT\times\Perm_n}(\CC^{2n})\,,\qquad
	\rho_*\circ p^*\colon S^{-1}_{\Tq}\KTh_{\Tq}(\Hilb{n}) \to S^{-1}_{\Tq}\KTh_{\Tq\times\Perm_n}(\CC^{2n})$$
	are isomorphisms of vector spaces over $\QQ(q,t)$ and $\QQ(q)$, respectively.
\end{cor}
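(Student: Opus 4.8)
The plan is to promote the non-equivariant isomorphism of the previous corollary to the localised equivariant setting by a freeness-and-specialisation argument, using the Localisation Theorem only to control ranks. First I would record the equivariance. The $\TT$-action on $\Aa^2_\CC$ induces actions on $\Hilb{n}$, on $\CC^{2n}$ and on $\CC^{2n}/\Perm_n$ that commute with the permutation action, and by functoriality of the reduced fibre product \eqref{diagram2} they lift to a $\TT\times\Perm_n$-action on $X_n$. Thus $p$ is $\TT$-equivariant (with $\Perm_n$ acting trivially on $\Hilb{n}$) and $\rho$ is $\TT\times\Perm_n$-equivariant, so
$$\Phi:=\rho_*\circ p^*\colon \KTh_\TT(\Hilb{n})\to \KTh_{\TT\times\Perm_n}(\CC^{2n})$$
is a well-defined homomorphism of $\KTh_\TT(\pt)$-modules, and analogously over $\Tq$.

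Next I would exhibit both sides as free $\KTh_\TT(\pt)$-modules of the same rank. Since $\CC^{2n}$ is $\TT\times\Perm_n$-equivariantly contractible, homotopy invariance gives $\KTh_{\TT\times\Perm_n}(\CC^{2n})\simeq \KTh_{\Perm_n}(\pt)\otimes_\QQ \KTh_\TT(\pt)$, free over $\KTh_\TT(\pt)$ of rank $\dim_\QQ\KTh_{\Perm_n}(\pt)$, i.e. the number of partitions of $n$. For the source, the Bia\l{}ynicki--Birula decomposition attached to a generic one-parameter subgroup endows $\Hilb{n}$ with a $\TT$-equivariant affine cell structure, so $\KTh_\TT(\Hilb{n})$ is free over $\KTh_\TT(\pt)$; its rank equals $\dim_\QQ\KTh(\Hilb{n})=|\Hilb{n}^\TT|$, again the number of partitions of $n$. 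Over $\Tq$ the same conclusions hold: $\Hilb{n}^\Tq$ is a disjoint union of affine spaces \cite{Evain}, and the associated Bia\l{}ynicki--Birula strata are affine, so $\KTh_\Tq(\Hilb{n})$ is free over $\KTh_\Tq(\pt)$ of the same rank.

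The core step is a determinant computation. Fixing bases, write $\Phi$ as a square matrix $M$ over $\KTh_\TT(\pt)=\QQ[q^{\pm1},t^{\pm1}]$. As $\rho_*$ and $p^*$ commute with the forgetful maps, reduction of $M$ modulo the augmentation ideal $(q-1,t-1)$ is the matrix of the non-equivariant map $\rho_*\circ p^*\colon \KTh(\Hilb{n})\to\KTh_{\Perm_n}(\CC^{2n})$, which is an isomorphism by the previous corollary. Hence $\det M$ is nonzero modulo $(q-1,t-1)$, so $\det M\neq 0$ in the domain $\QQ[q^{\pm1},t^{\pm1}]$; after inverting all nonzero elements it becomes a unit of $S^{-1}_\TT\KTh_\TT(\pt)=\QQ(q,t)$, and therefore $S^{-1}_\TT\Phi$ is an isomorphism of $\QQ(q,t)$-vector spaces. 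The identical argument over $\Tq$, reducing modulo $(q-1)$, gives the second isomorphism, over $\QQ(q)$.

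I expect the main obstacle to be the bookkeeping behind the freeness and the specialisation: one must justify that reduction modulo the augmentation ideal genuinely recovers non-equivariant $\KTh$-theory, which is valid here precisely because $\Hilb{n}$ is equivariantly cellular and $\CC^{2n}$ is equivariantly contractible. A more conceptual route, which additionally yields the stronger un-localised equivariant isomorphism, is to equivariantise \cite{BKR} directly: the Fourier--Mukai kernel $\O_{X_n}$ is $\TT\times\Perm_n$-equivariant, so the equivalence lifts to the equivariant derived categories and induces $\KTh_\TT(\Hilb{n})\simeq\KTh_{\TT\times\Perm_n}(\CC^{2n})$ before localisation; there the delicate point is verifying that the inverse kernel and the unit and counit isomorphisms are themselves equivariant.
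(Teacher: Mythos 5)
Your proposal is correct and follows essentially the same strategy as the paper: deduce the equivariant statement from the non-equivariant isomorphism by specialising at $q=1$ (and $t=1$), using that $\rho_*\circ p^*$ commutes with this restriction. The paper merely packages the specialisation step differently — instead of a determinant over $\QQ[q^{\pm1},t^{\pm1}]$ it proves a small lemma showing, via the $(q-1)$-adic valuation, that the images of a basis stay linearly independent after localisation, and it applies this in two stages ($\QQ\to\QQ(q)\to\QQ(q,t)$) — while taking for granted the identifications $S^{-1}_{\Tq}\KTh_{\Tq}(\Hilb{n})\simeq\KTh(\Hilb{n})\otimes_\QQ\QQ(q)$ that your freeness discussion makes explicit.
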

\noindent The torus-equivariant version is an easy consequence of the non-equivariant one. It is widely known, see e.g. \cite[Section 5.4.3]{Haiman_surv}, yet we were unable to find a proof in the literature. For the sake of completeness we present a simple argument in Appendix \ref{s:appendix}.
\begin{df}[cf. {\cite[Section 3.2]{Boissiere}}] \label{df:McKay}
	Let $i\colon\{0\} \mono \CC^{2n}$ be the inclusion of the origin and $\Fr$ the Frobenius characteristic map (see Section \ref{s:Frob}).
	We consider the composition of isomorphisms of $\QQ(q,t)$-vector spaces:
	$$
	\Theta_{q,t}\colon \Lambda_{q,t}^n\xto{\Fr^{-1}}
	 S^{-1}_{\TT}\KTh_{\TT\times\Perm_n}(\pt) \xto{(i^*)^{-1}}
	 S^{-1}_{\TT}\KTh_{\TT\times\Perm_n}(\CC^{2n}) \xto{(q_*\circ p^*)^{-1}}
	 S^{-1}_{\TT}\KTh_{\TT}(\Hilb{n})\,.
	$$
	Analogously, we consider isomorphisms
	$$\Theta: \Lambda^n \xtto{\simeq} \KTh (\Hilb{n}) \,, \qquad \Theta_q: \Lambda_q^n  \xtto{\simeq} S^{-1}_\Tq \KTh^\Tq(\Hilb{n})  \,.$$ 
\end{df}
\begin{pro} \label{pro:sn}
	Schur function $s_n$ corresponds to the trivial bundle over Hilbert scheme in the McKay correspondence, i.e.
	$$\Theta_{q,t}(s_n)=1 \in \KTh^\TT(\Hilb{n})\,.$$
\end{pro}
This is a well known fact (e.g. \cite[First dot in Section 8.1]{Boissiere}, \cite[Discussion after Proposition 5.4.6]{Haiman_surv}), yet we were unable to find a direct reference. Below we present a sketch of a proof.
\begin{proof}[Proof]
	Let $\rho_{triv}$ be the trivial representation of the permutation group $\Perm_n$. By definition $s_n$ corresponds to $\rho_{triv}$ in the Frobenius map, i.e. $\Fr^{-1}(s_n)=\rho_{triv}$. \\
	The sheaf $P=p_*\O_{X_n}$ is a  $\Perm_n$-equivariant vector bundle of rank $n!$, see \cite[Theorem 1]{Haiman} and \cite[Theorem 5.2.1]{Haiman2}.
	Representation $\rho_{triv}$ is self-dual, therefore \cite[discussion after Formula 5.3]{ItoNakajima} implies that
	$$
	(q_*\circ p^*)^{-1}(\rho_{triv})=(q_*\circ p^*)^{-1}(\rho^*_{triv})=
	\operatorname{Inv}(P)\,,
	$$
	where $\operatorname{Inv}$ denotes the $\Perm_n$-invariant subbundle. Every fiber of $P$ is isomorphic to regular representation of $\Perm_n$, thus $\operatorname{Inv}(P)$ is a line bundle. The standard map
	$\O_{\Hilb{n}}\to p_*\O_{X_n}$ induces a nowhere vanishing invariant section of $P$. Therefore the bundle $\operatorname{Inv}(P)$ is trivial.
	% The nonvanishing part follows from identification of $\Hilb{n}$ with the Hilbert scheme of $\Perm_n$ orbits, see \cite[Corollary 5.2.2]{Haiman2} 
\end{proof}
\subsection{Multiplication by the class of a vector bundle} \label{s:McKay2}
 In this section we recall the multiplicative properties of the McKay correspondence obtained in \cite{Boissiere}. We describe the operation on the  symmetric functions induced by the multiplication by the class of a vector bundle in the $\KTh$-theory of $\Hilb{n}$.
\begin{df}
	Let $F$ be a $\TT$-equivariant vector bundle on the Hilbert scheme $\Hilb{n}$. We consider a $\QQ(q,t)$-linear map $\E^{q,t}_F:\Lambda^n_{q,t} \to \Lambda^n_{q,t}$ defined by
	$\Theta_{q,t}(\E^{q,t}_F(x))=\Theta_{q,t}(x)\otimes F \,,$
	i.e. the following diagram commutes:
$$
	\begin{tikzcd}
		{\Lambda^n_{q,t}} & {S^{-1}_\TT\KTh_\TT(\Hilb{n})} \\
		{\Lambda^n_{q,t}} & {S^{-1}_\TT\KTh_\TT(\Hilb{n})}
		\arrow["{\E^{q,t}_F}"', from=1-1, to=2-1]
		\arrow["{\Theta_{q,t}}", from=1-1, to=1-2]
		\arrow["{-\otimes F}", from=1-2, to=2-2]
		\arrow["{\Theta_{q,t}}", from=2-1, to=2-2]
	\end{tikzcd}.
$$
Analogous definitions may be given when $F$ is a vector bundle or a $\Tq$-equivariant vector bundle.  We consider the $\QQ$-linear map $\E_F:\Lambda^n \to \Lambda^n$, or the $\QQ(q)$-linear map $\E^{q}_F:\Lambda^n_{q} \to \Lambda^n_{q}$ defined by
$$\Theta_{q}(\E^{q}_F(x))=\Theta_{q}(x)\otimes F\,, \qquad \Theta(\E_F(x))=\Theta(x)\otimes F \,.$$
\end{df}
As a straightforward consequence of the above definition we obtain that the maps $\E_F^{q}$ and $\E_F$ are restrictions of $\E_F^{q,t}$ to $\Lambda^n_{q}$ and $\Lambda^n$, respectively.
\begin{pro} \label{pro:restriction1}
	Let $F$ be a $\TT$-equivariant vector bundle on the Hilbert scheme $\Hilb{n}$ and $f\in \Lambda^n_{q,t}$ be a symmetric function. Suppose that the restriction $f_{|t=1} \in \Lambda^n_{q}$ (or $f_{|t=1,q=1}\in \Lambda^n$) is defined. Then $\E_F^{q,t}(f)_{|t=1}$ (or $\E_F^{q,t}(f)_{|t=1,q=1}$) is also defined. Moreover,
	$$\E^q_F(f_{|t=1})=\E_F^{q,t}(f)_{|t=1}\in \Lambda^n_{q}\,, \qquad \E_F(f_{|t=1,q=1})=\E_F^{q,t}(y)_{|t=1,q=1}\in \Lambda^n\,.$$
\end{pro}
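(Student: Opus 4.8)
The plan is to verify that each of the elementary maps composing $\E^{q,t}_F=\Theta_{q,t}^{-1}\circ(-\otimes F)\circ\Theta_{q,t}$ is compatible with the specialization $t=1$ in the following strong sense: it carries classes admitting a specialization at $t=1$ to such classes, and it commutes with $(-)_{|t=1}$. Since $\E^q_F=\Theta_q^{-1}\circ(-\otimes F_{|t=1})\circ\Theta_q$ is literally the same composition performed over $\QQ(q)$, the proposition will then follow by composing these compatibilities and chasing the resulting diagram. Only the behaviour of the McKay isomorphism $\rho_*\circ p^*$ requires real work; the other factors are formal.

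First I would dispose of the elementary factors. The Frobenius map $\Fr$ is the extension of scalars to $\QQ(q,t)$ of a fixed $\QQ$-linear isomorphism, so $\Fr$ and $\Fr^{-1}$ manifestly preserve specializability at $t=1$ and commute with $(-)_{|t=1}$. For $(i^*)^{-1}$ I would use that $\CC^{2n}$ is equivariantly contractible: the projection $\pi\colon\CC^{2n}\to\{0\}$ satisfies $\pi\circ i=\id$, and homotopy invariance of equivariant K-theory makes $\pi^*$ an isomorphism with inverse $i^*$. Thus $(i^*)^{-1}=\pi^*$ is an honest, non-localized, natural isomorphism; as such it commutes with restriction to $\Tq$ and preserves specializability in both directions, and no Euler class ever enters. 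Finally $-\otimes F$ satisfies $(x\otimes F)_{|t=1}=x_{|t=1}\otimes F_{|t=1}$ and creates no pole at $t=1$, so it is compatible with $t=1$ and specializes to $-\otimes F_{|t=1}$.

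The main step, and the one I expect to be the real obstacle, is the McKay map. Naturality of $p^*$ and $\rho_*$ under restriction to the subtorus $\Tq$ yields a commutative square relating $\rho_*p^*$ for $\TT$ and for $\Tq$ via $(-)_{|t=1}$, so the forward map already sends specializable classes to specializable classes and commutes with $(-)_{|t=1}$; the issue is its inverse. To control the inverse I would use that $\KTh_\TT(\Hilb{n})$ and $\KTh_{\TT\times\Perm_n}(\CC^{2n})$ are free $\QQ[q^{\pm1},t^{\pm1}]$-modules of the same finite rank, namely the number of partitions of $n$, with $t=1$ realizing the base change to the corresponding $\Tq$-modules. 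In bases adapted to this base change, $\rho_*p^*$ becomes a square matrix $M$ over $\QQ[q^{\pm1},t^{\pm1}]$, and by the commutative square $M_{|t=1}$ is the matrix of the $\Tq$-version of $\rho_*p^*$. Corollary \ref{cor:eq} makes the latter invertible after inverting $S_\Tq$, so $(\det M)_{|t=1}=\det(M_{|t=1})\neq 0$ in $\QQ[q^{\pm1}]$. Hence $\det M$ does not vanish at $t=1$, the entries of $M^{-1}$ have no pole there, and $(\rho_*p^*)^{-1}$ both preserves specializability at $t=1$ and commutes with $(-)_{|t=1}$.

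Composing the four compatibilities shows that whenever $f_{|t=1}$ is defined, $\E^{q,t}_F(f)_{|t=1}$ is defined and equals $\E^q_F(f_{|t=1})$, which is the first assertion. The second assertion, with the further restriction $q=1$, follows by running the identical argument for the specialization from the $\Tq$-equivariant to the non-equivariant setting; this case is even easier, since the non-equivariant McKay correspondence is already an isomorphism $\KTh(\Hilb{n})\xtto{\simeq}\KTh_{\Perm_n}(\CC^{2n})$ with no localization, so the determinant argument is unnecessary there. The crux of the whole proof is thus the freeness-and-determinant argument of the previous paragraph, guaranteeing that the inverse McKay map introduces no pole at $t=1$; granting that, the statement is a diagram chase.
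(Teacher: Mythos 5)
Your argument is correct, but it cannot be ``the same as the paper's'' because the paper offers no proof here: the proposition is announced as a straightforward consequence of the definition, and the only related argument (the Appendix lemma) shows that a map restricting to an isomorphism is an isomorphism, \emph{not} that its inverse restricts to the inverse --- which is exactly what this statement needs and what you supply. Your decomposition of $\E^{q,t}_F$ into $\Fr$, $(i^*)^{-1}$, $(\rho_*\circ p^*)^{\pm 1}$ and $-\otimes F$ is the right one, and you correctly identify the two places where a pole along $t=1$ could arise: inverting $i^*$, where taking $(i^*)^{-1}=\pi^*$ by homotopy invariance is essential (the localization-theorem inverse would divide by the Euler class $(1-q^{\mp 1})^n(1-t^{\mp 1})^n$, which \emph{does} vanish at $t=1$), and inverting $\rho_*\circ p^*$. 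The freeness-and-determinant argument for the latter is sound: $\KTh_\TT(\Hilb{n})$ is free over $\QQ[q^{\pm1},t^{\pm1}]$ of rank $p(n)$ with restriction to $\Tq$ realizing base change (by the Bia{\l}ynicki-Birula decomposition --- a standard fact, but one you should cite since the paper never states it), so $\rho_*p^*$ is an integral matrix $M$ with $(\det M)_{|t=1}=\det(M_{|t=1})\neq 0$ by Corollary \ref{cor:eq}, whence $M^{-1}$ is regular along $t=1$ and commutes with the specialization. One small inaccuracy: for the further specialization at $q=1$ the determinant argument is not ``unnecessary'' --- you still need $(\rho^{\Tq}_*p_{\Tq}^*)^{-1}$ to be pole-free at $q=1$, which is the same argument with the non-equivariant isomorphism now supplying the non-vanishing of the specialized determinant; what changes is only that the input isomorphism requires no localization. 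This does not affect correctness. In short, you buy an actual proof of a claim the paper leaves implicit, at the cost of importing two standard structural facts about $\KTh_\TT(\Hilb{n})$.
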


\begin{df}
	Let $F$ be a $\TT$-equivariant vector bundle on the Hilbert scheme $\Hilb{n}$. Let $\nabla^{q,t}_F:\Lambda^n_{q,t} \to \Lambda^n_{q,t}$ be the $\QQ(q,t)$-linear map defined by
	$$ \nabla^{q,t}_F\tilde{H}_\lambda=F_\lambda \tilde{H}_\lambda \, \textrm{ for all } \lambda\vdash n.$$
\end{df}
\begin{rem}
	The operators $\nabla^{q,t}_F$ were studied in a  purely combinatorial way in \cite{nabla1}.
\end{rem}

\begin{tw}[{\cite[Theorem 4.1 and its proof]{Boissiere}}] \label{tw:B1}
	Let $F$ be a $\TT$-equivariant vector bundle over $\Hilb{n}$. We have equality of endomorphisms of $\Lambda_{q,t}^n$
	%maps in $\Hom(\Lambda_{q,t}^n,\Lambda_{q,t}^n)$
	\begin{align*}
		\E^{q,t}_F=\omega\circ(\nabla^{q,t}_F)^*\circ \omega \,,
	\end{align*}
	where the dual is taken with respect to the standard scalar product.
\end{tw}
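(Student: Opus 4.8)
The plan is to reduce the operator identity to the statement that $\Theta_{q,t}$ matches two explicit eigenbases, and then to isolate the single geometric ingredient that makes this work.

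\emph{Two diagonalisations.} First I would record that both sides of the claimed identity are diagonalisable with the \emph{same} eigenvalues $F_\lambda$, but in different bases. On the K-theory side, the Localisation Theorem gives that the fixed point classes $[\mathcal{O}_\lambda]$ form a basis of $S^{-1}_\TT\KTh_\TT(\Hilb{n})$; since $\mathcal{O}_\lambda$ is a skyscraper at the isolated fixed point $\lambda$, tensoring by $F$ multiplies it by the fibre class, $[\mathcal{O}_\lambda]\otimes F = F_\lambda\cdot[\mathcal{O}_\lambda]$. Hence $-\otimes F$ is diagonal in $\{[\mathcal{O}_\lambda]\}$ with eigenvalue $F_\lambda$, and conjugating by $\Theta_{q,t}$ shows $\E^{q,t}_F$ is diagonal in $\{\Theta_{q,t}^{-1}[\mathcal{O}_\lambda]\}$ with the same eigenvalues. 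On the symmetric function side, $\nabla^{q,t}_F$ is diagonal in the Macdonald basis $\{\tilde{H}_\lambda\}$ with eigenvalue $F_\lambda$ by definition.

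\emph{Linear algebra of the adjoint.} Next I would handle the right-hand side purely formally. Let $\{b^\lambda\}$ be the basis of $\Lambda^n_{q,t}$ dual to $\{\tilde{H}_\lambda\}$ with respect to the standard scalar product. Using the elementary fact that the adjoint of an operator diagonal in a basis is diagonal in the dual basis with unchanged eigenvalues, $(\nabla^{q,t}_F)^*$ is diagonal in $\{b^\lambda\}$ with eigenvalue $F_\lambda$. Since $\omega$ is a self-adjoint involution (so $\{\omega b^\lambda\}$ is dual to $\{\omega\tilde{H}_\lambda\}$), the operator $\omega(\nabla^{q,t}_F)^*\omega$ is diagonal in $\{\omega b^\lambda\}$, again with eigenvalue $F_\lambda$. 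Thus the right-hand side of the theorem is completely pinned down: its eigenvectors are the $\omega b^\lambda$ and its eigenvalues are the $F_\lambda$.

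\emph{The geometric identification (the crux).} Comparing the two previous paragraphs, the theorem reduces to the single assertion that $\Theta_{q,t}$ carries these eigenbases to one another, i.e.
\[
\Theta_{q,t}^{-1}[\mathcal{O}_\lambda]\ \propto\ \omega b^\lambda\qquad\text{for every }\lambda\vdash n .
\]
Granting this, $\E^{q,t}_F$ and $\omega(\nabla^{q,t}_F)^*\omega$ are diagonal in the same basis $\{\omega b^\lambda\}$ with identical eigenvalues $F_\lambda$, hence equal. To prove the proportionality I would unwind $\Theta_{q,t}^{-1}=\Fr\circ i^*\circ(\rho_*\circ p^*)$: applying it to $[\mathcal{O}_\lambda]$ amounts to computing the bigraded $\Perm_n\times\TT$-character of the fibre of the isospectral Hilbert scheme $X_n$ over the monomial ideal $\lambda$, which by Haiman's $n!$-theorem \cite{Haiman} is the modified Macdonald polynomial $\tilde{H}_\lambda$; the passage to the dual basis and the appearance of $\omega$ come from the restriction $i^*$ to the origin together with the duality built into the pushforward $\rho_*$. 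Equivalently, and perhaps more transparently, one shows that the standard scalar product corresponds under $\Theta_{q,t}$ to the natural Euler pairing on $\KTh_\TT(\Hilb{n})$ (up to the twist by $\omega$); for that pairing $-\otimes F$ is manifestly self-adjoint, which is the structural reason the answer for $\E^{q,t}_F$ is an adjoint.

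I expect the main obstacle to be exactly this last step: getting the normalisation, the involution $\omega$, and the dual basis all correct simultaneously. The formal diagonalisation arguments are routine, but the matching $\Theta_{q,t}^{-1}[\mathcal{O}_\lambda]\propto\omega b^\lambda$ genuinely rests on Haiman's description of $X_n$ and on the precise compatibility between the Hall inner product on $\Lambda^n_{q,t}$ and the K-theoretic pairing — and it is there, rather than in the linear algebra, that both the $\omega$ and the transpose $(-)^*$ in the final formula are produced.
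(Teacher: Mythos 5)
The paper does not prove Theorem~\ref{tw:B1} at all: it imports it from \cite[Theorem 4.1]{Boissiere}, and the remark printed immediately after the statement records precisely the intuition you describe --- the fixed-point classes are an eigenbasis of $-\otimes F$ with eigenvalues $F_\lambda$, $\nabla^{q,t}_F$ is diagonal on the $\tilde{H}_\lambda$ by definition, and ``the essence of the theorem is a description of the element $\Theta^{-1}([\lambda])$ in terms of Macdonald polynomials.'' So your reduction is faithful to the cited argument, and your formal steps are sound: the skyscraper identity $[\mathcal{O}_\lambda]\otimes F=F_\lambda[\mathcal{O}_\lambda]$, the fact that an adjoint is diagonal in the dual basis with the same eigenvalues, and the self-adjointness of $\omega$ are all correct (the first two are exactly the devices the paper itself deploys in the $\Tq$-equivariant setting, in Lemma~\ref{lem:odot1} and Lemma~\ref{lem:odot2}). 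The one thing you do not actually establish is the step you yourself flag as the crux, namely that $\Fr\circ i^*\circ\rho_*\circ p^*$ sends $[\mathcal{O}_\lambda]$ to a nonzero multiple of $\omega b^\lambda$. Making that precise requires (i) flatness of $p$, so that $p^*[\mathcal{O}_\lambda]$ is the class of the scheme-theoretic fibre of $X_n$; (ii) Haiman's theorem that the Frobenius character of that fibre (the fibre of the Procesi bundle) is $\tilde{H}_\lambda$; and (iii) the Koszul factor contributed by $i^*$ for a module supported at the origin of $\CC^{2n}$, which under $\Fr$ becomes a plethystic twist by $X(1-q^{\pm1})(1-t^{\pm1})$ and is exactly what converts the Macdonald basis into $\omega$ of its Hall-dual basis, with the correct normalisation. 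Those computations are the content of \cite[Theorem 4.1]{Boissiere}; as a standalone proof your write-up stops where they begin, but as a reconstruction of the argument the paper is citing, the architecture and the location of all the difficulty are identified correctly.
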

\begin{rem}
	There is a clear intuition behind the above theorem. The fundamental classes of the fixed points basis $[\lambda]$ form a basis of K-theory, cf. Proposition \ref{pro:localised} (1). It is an eigenbasis of the map $-\otimes F$. Therefore
	$\Theta^{-1}([\lambda])$ is an eigenbasis of the operator $\E_F^{q,t}$.
	The essence of the theorem is a description of the element $\Theta^{-1}([\lambda])$ in terms of Macdonald polynomials.
\end{rem}
\noindent Theorem \ref{tw:B1} and Proposition \ref{pro:restriction1} imply that the map $\nabla_F^{q,t}$ has well-defined restrictions to \hbox{$\Lambda^n_q$ and $\Lambda^n$.}

\begin{df} \label{df:restriction}
	Let $F$ be a $\TT$-equivariant vector bundle over $\Hilb{n}$. We define operators $\nabla_F^{q}:\Lambda^n_q\to \Lambda_q^n$ and $\nabla_F:\Lambda^n\to \Lambda^n$ such that
	$$
	\nabla^{q}_F:=(\omega\E^{q}_F\omega)^*\,, \qquad \nabla_F:=(\omega\E_F\omega)^*\,.
	$$
\end{df}

\begin{cor}
	Let $F$ be a $\TT$-equivariant vector bundle over $\Hilb{n}$ and $x\in \Lambda^n_{q,t}$.
	\begin{enumerate}
			\item Suppose that $x_{|t=1} \in \Lambda^n_{q}$ is defined. Then $\nabla_F^{q,t}(x)_{|t=1}$ is also defined. Moreover,
			$$\nabla_F^{q}(x_{|t=1})=\nabla_F^{q,t}(x)_{|t=1} \in \Lambda^n_{q},.$$
			\item Suppose that $x_{|t=1,q=1} \in \Lambda^n$ is defined. Then $\nabla_F^{q,t}(x)_{|t=1,q=1}$ is also defined. Moreover,
			$$\nabla_F(x_{|t=1,q=1})=\nabla_F^{q,t}(x)_{|t=1,q=1} \in \Lambda^n\,.$$
		\end{enumerate}
\end{cor}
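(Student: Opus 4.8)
The plan is to deduce the statement from the already-established restriction property of $\E_F^{q,t}$ in Proposition \ref{pro:restriction1}, using the fact that all three operators $\nabla_F^{q,t},\nabla_F^q,\nabla_F$ are produced from the corresponding $\E$-operators by one and the same recipe. First I would record this uniformity. By Definition \ref{df:restriction} we have $\nabla_F^q=(\omega\E_F^q\omega)^*$ and $\nabla_F=(\omega\E_F\omega)^*$, while Theorem \ref{tw:B1} gives $\E_F^{q,t}=\omega(\nabla_F^{q,t})^*\omega$. Conjugating the latter by $\omega$ and using $\omega^2=\id$ yields $\omega\E_F^{q,t}\omega=(\nabla_F^{q,t})^*$, and applying the adjunction once more (an involution, since the Hall pairing is symmetric and nondegenerate on the finite-dimensional space $\Lambda^n_{q,t}$) gives $\nabla_F^{q,t}=(\omega\E_F^{q,t}\omega)^*$. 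Thus it suffices to prove that both the involution $\omega$ and the adjunction $(\cdot)^*$ are compatible with the specialization $t=1$ (resp. $t=1,q=1$): they must preserve regularity at $t=1$ and commute with evaluation there. Granting this, the claim follows by feeding in Proposition \ref{pro:restriction1}.

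The compatibility of $\omega$ is immediate: in the power sum basis $\omega$ is diagonal with eigenvalues $(-1)^{l(\lambda)}$, which are independent of $q$ and $t$, so $\omega$ obviously preserves regularity at $t=1$ and commutes with the specialization. The one step that genuinely requires an argument is the adjunction. Writing an operator $A$ on $\Lambda^n_{q,t}$ in the power sum basis as $Ap_\nu=\sum_\mu a_{\mu\nu}p_\mu$, the orthogonality relation $\langle p_\lambda,p_\mu\rangle=\delta_{\lambda\mu}z_\lambda$ from Definition \ref{df:w} forces the matrix of $A^*$ to be $b_{\nu\kappa}=a_{\kappa\nu}\,z_\kappa/z_\nu$. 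Since the $z_\lambda$ are positive rationals with no dependence on $q$ or $t$, the entries of $A^*$ are, up to these fixed scalar ratios, the transpose of the entries of $A$. Consequently $A^*$ is regular at $t=1$ exactly when $A$ is, and $(A^*)_{|t=1}=(A_{|t=1})^*$; the identical assertion holds for the $t=1,q=1$ specialization.

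Finally I would combine the pieces. Proposition \ref{pro:restriction1} says precisely that $\E_F^{q,t}$ is regular at $t=1$ with restriction $\E_F^q$. Conjugation by $\omega$ followed by adjunction, both of which commute with $t=1$ by the two steps above, then show that $\nabla_F^{q,t}=(\omega\E_F^{q,t}\omega)^*$ is regular at $t=1$ with restriction $(\omega\E_F^q\omega)^*=\nabla_F^q$. Evaluating on an element $x$ whose restriction $x_{|t=1}$ is defined gives that $\nabla_F^{q,t}(x)_{|t=1}$ is defined and equals $\nabla_F^q(x_{|t=1})$, which is part (1); part (2) is identical after specializing at $t=1,q=1$ and invoking the second halves of Proposition \ref{pro:restriction1} and Definition \ref{df:restriction}. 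The main (and only delicate) obstacle is the adjoint step: one must be certain that the structure constants $z_\lambda$ of the Hall pairing carry no $q,t$-dependence, so that $(\cdot)^*$ neither introduces a pole at $t=1$ nor fails to commute with the specialization. Everything else is the formal transport of Proposition \ref{pro:restriction1} through two specialization-compatible operations.
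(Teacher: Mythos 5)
Your proof is correct and follows essentially the same route as the paper: both reduce the claim to Proposition \ref{pro:restriction1} applied to the power sums, using that the adjoint with respect to the Hall pairing is computed in the $p_\lambda$-basis via the $q,t$-independent constants $z_\lambda$, so that passing to $(\omega(\cdot)\omega)^*$ commutes with the specializations. The only presentational difference is that you phrase this at the level of matrix entries of the operator (and spell out $\nabla_F^{q,t}=(\omega\E_F^{q,t}\omega)^*$ via involutivity of the adjoint), whereas the paper expands $\nabla_F^{q,t}(x)=\sum_\lambda z_\lambda^{-1}\langle x,\omega\E_F^{q,t}\omega(p_\lambda)\rangle\,p_\lambda$ directly.
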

\begin{proof}
	Theorem \ref{tw:B1} and Definition \ref{df:restriction} imply that 
	$$\nabla^{q,t}_F(x)=(\omega\E^{q,t}_F\omega)^*(x)=\sum_{\lambda\vdash n}
	\frac{\langle x, \omega\E^{q,t}_F\omega(p_\lambda) \rangle}{z_\lambda} p_\lambda\,,
	\qquad
	\nabla^{q}_F(x_{|t=1})
	=\sum_{\lambda\vdash n}
	\frac{\langle x_{|t=1}, \omega\E^{q}_F\omega(p_\lambda) \rangle}{z_\lambda} p_\lambda\,.
	$$
	The restriction $x_{t=1}$ is well defined, thus for any partition $\lambda\vdash n$ we have
	$$
	\langle x, \omega\E^{q,t}_F\omega(p_\lambda) \rangle_{|t=1}=
	\langle x_{|t=1}, (\omega\E^{q,t}_F\omega(p_\lambda))_{|t=1} \rangle=
	\langle x_{|t=1}, \omega\E^{q}_F\omega(p_\lambda) \rangle\,.
	$$
	We used Proposition \ref{pro:restriction1} for $f=\omega(p_\lambda)$. This proves the first point.
	\\
	The proof of the second point is analogous.
\end{proof}

\noindent The $\KTh$-theory of the Hilbert scheme is generated as a $\QQ$-algebra by the Adams powers of the tautological bundle, cf. Theorem \ref{tw:Kirwan}. Therefore in order to describe the map $\E_F$ for every $F$ it is enough to describe it for $F=\psi^kB_n\,, k\in \NN$.
\begin{df}
	We write $\nabla_k^{q,t}$ (or $\nabla_k^{q}$, $\nabla_k$) for $\nabla^{q,t}_{\psi^kB}$ (or $\nabla_{\psi^kB}^{q}$, $\nabla_{\psi^kB}$ respectively). In this definition we intentionally omit the number of points in the Hilbert scheme.
\end{df}

The operator $\nabla_1^{q,t}$ can be expressed using   plethystic substitution. The Macdonald polynomial $\tilde{H}_\lambda(q,t)$ is an eigenvector of the operator $D:\Lambda^n_{q,t} \to \Lambda^n_{q,t}$ defined by
\begin{align} \label{w:5}
	D(f)=\operatorname{Coeff} \left(z^0; f\left[X+\frac{(1-q)(1-t)}{z}\right]\cdot \exp\left(\sum_{k\ge 1}\frac{-z^kp_k}{k} \right)\right) 
\end{align}
with eigenvalue $1-(1-q)(1-t)B_{\lambda}$, cf. \cite[Corollary 3.5.5]{Haiman_surv}. Therefore, \hbox{$\nabla_1^{q,t}=\frac{1-D}{(1-q)(1-t)}$.}
\subsection{The main result}
We are now ready to state the main theorem, which gives a closed formula for the multiplication induced by the tensoring with the $k$'th Adams power of the tautological bundle.

\begin{tw}[cf. {\cite[Conjecture 8.1]{Boissiere}}]
	\label{tw:main1}
	Let $k\in \NN$ be a natural number. We have
	$$
	\E_{\psi^kB}=\sum_{\lambda}\frac{k^{l(\lambda)-1}s(\lambda)}{\prod_{i\in\NN} a_i(\lambda)!} p_{s(\lambda)}\partial_{\lambda}\,.
	$$
\end{tw}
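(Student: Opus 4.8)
The plan is to exploit the reduction to the one-dimensional torus $\Tq$ (the substitution $t=1$) advertised in the introduction, and to prove the equivalent statement that the adjoint operator $(\E_{\psi^kB})^*$ is a \emph{derivation} of $\Lambda$ whose value on each generator $p_m$ is prescribed.

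First I would establish that $\nabla^q_k$ is a derivation of $\Lambda_q=\bigoplus_n\Lambda^n_q$. By Proposition \ref{pro:t=1} the restriction $\tilde H_\lambda(q,1)$ is a scalar multiple of $v_\lambda$, so $v_\lambda$ is an eigenvector of $\nabla^q_k$ with eigenvalue $(\psi^kB_n)^q_\lambda$. Proposition \ref{pro:v}(1) gives $v_{\lambda\cdot\mu}=v_\lambda v_\mu$, while Corollary \ref{cor:Adams} gives the additivity $(\psi^kB_{n+m})^q_{\lambda\cdot\mu}=(\psi^kB_n)^q_\lambda+(\psi^kB_m)^q_\mu$ of the eigenvalues. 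Combining these on the basis $\{v_\lambda\}$ shows $\nabla^q_k(v_\lambda v_\mu)=\nabla^q_k(v_\lambda)v_\mu+v_\lambda\nabla^q_k(v_\mu)$, i.e. $\nabla^q_k$ is a derivation. Since $\omega$ is a ring automorphism and, by Theorem \ref{tw:B1} together with Definition \ref{df:restriction}, $(\E^q_k)^*=\omega\,\nabla^q_k\,\omega$, the operator $(\E^q_k)^*$ is again a derivation; restricting $q\to1$ (using Proposition \ref{pro:restriction1} and its $\nabla$-analogue) yields that $(\E_{\psi^kB})^*$ is a derivation as well. I emphasise that this is exactly the place where the choice $t=1$ is essential: the additivity of Corollary \ref{cor:Adams} fails at $q=1$.

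A derivation of the polynomial ring $\Lambda=\QQ[p_1,p_2,\dots]$ is determined by its values on the generators $p_m$, so the theorem reduces to a single computation. On the one hand, applying Proposition \ref{pro:adjoint} to the claimed right-hand side, each summand of $\bigl(\sum_\lambda \tfrac{k^{l(\lambda)-1}s(\lambda)}{\prod_i a_i(\lambda)!}p_{s(\lambda)}\partial_\lambda\bigr)^*$ becomes $\tfrac{p_\lambda}{\prod_j\lambda_j}\,s(\lambda)\tfrac{\partial}{\partial p_{s(\lambda)}}$, which is first order; hence the adjoint is the derivation sending $p_m\mapsto m^2\sum_{\lambda\vdash m}\tfrac{k^{l(\lambda)-1}}{\prod_i a_i(\lambda)!\prod_j\lambda_j}\,p_\lambda$. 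On the other hand, $(\E_{\psi^kB})^*(p_m)=-\omega\bigl(\nabla_k(p_m)\bigr)$, so it suffices to compute $\nabla^q_k(p_m)$ and let $q\to1$. For this I would use the generating function $V(z):=\sum_{j\ge0}v_{(j)}z^j=\exp\!\bigl(\sum_{r\ge1}\tfrac{p_r}{r(1-q^r)}z^r\bigr)$, which follows from $p_r[X/(1-q)]=p_r/(1-q^r)$ and Definition \ref{df:v}. Applying $\nabla^q_k$ and using $\nabla^q_k v_{(j)}=[j]_{q^k}v_{(j)}$ gives, on one side, $\nabla^q_k V(z)=\frac{V(q^kz)-V(z)}{q^k-1}$, and on the other, by the Leibniz rule, $\nabla^q_kV(z)=V(z)\cdot\sum_{r\ge1}\tfrac{\nabla^q_k(p_r)}{r(1-q^r)}z^r$. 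Comparing the two isolates $\sum_r\tfrac{\nabla^q_k(p_r)}{r(1-q^r)}z^r=\tfrac{1}{q^k-1}\bigl(\exp(\sum_r\tfrac{(q^{kr}-1)p_r}{r(1-q^r)}z^r)-1\bigr)$, from which $\nabla^q_k(p_m)$ is read off as a coefficient; the substitution $q\to1$ (using $\frac{1-q^m}{q^k-1}\to-\frac mk$ and $\frac{q^{k\lambda_j}-1}{1-q^{\lambda_j}}\to-k$) then produces precisely the required value of $(\E_{\psi^kB})^*(p_m)$.

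Finally, since two derivations of $\Lambda$ agreeing on all the generators $p_m$ coincide, $(\E_{\psi^kB})^*$ equals the adjoint of the claimed formula, and taking adjoints once more gives the theorem. I expect the main obstacle to be the bookkeeping around the two successive specialisations $t=1$ and $q=1$ — in particular verifying that all the rational-function limits are well defined and that they commute with taking adjoints — rather than the generating-function computation itself, which is routine once the derivation property is in hand.
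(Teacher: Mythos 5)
Your proposal is correct, and its overall architecture coincides with the paper's: restrict to the subtorus $\Tq$ (i.e.\ set $t=1$), observe that $v_\lambda=h_\lambda[X/(1-q)]$ is an eigenbasis of $\nabla^q_k$ with eigenvalues $(\psi^kB)^q_\lambda$, deduce the Leibniz rule from the multiplicativity $v_{\lambda\cdot\mu}=v_\lambda v_\mu$ together with the additivity of Corollary \ref{cor:Adams}, reduce to the generators $p_m$, and pass to adjoints via Proposition \ref{pro:adjoint}. The one place where you genuinely diverge is the computation of $\nabla^q_k(p_m)$. The paper first computes the $k=1$ case from the explicit plethystic formula \eqref{w:5} for the Macdonald operator $D$ (Proposition \ref{pro:k=1}), then shows that for general $k$ the coefficients must have the form $\frac{(1-q^n)C_\lambda(q^k)}{\prod_i(1-q^{\lambda_i})}$ with $C_\lambda$ independent of $k$ (Proposition \ref{pro:poly}), and pins down $C_\lambda$ by comparison with $k=1$. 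You instead treat all $k$ at once via the generating function $\sum_j v_{(j)}z^j=\exp\bigl(\sum_r p_rz^r/(r(1-q^r))\bigr)$ and the one-row eigenvalues $[j]_{q^k}$, comparing $\frac{V(q^kz)-V(z)}{q^k-1}$ with $V(z)\cdot\sum_r\frac{\nabla^q_k(p_r)}{r(1-q^r)}z^r$; this reproduces exactly Corollary \ref{cor:pn}, and your limits $q\to1$ and the final adjoint computation check out. Your route is more self-contained — it needs neither the operator $D$ nor the known $k=1$ answer — at the cost of the small extra remark that the Leibniz rule applies coefficientwise to the exponential series (harmless, since each $z$-coefficient is a finite sum in a fixed $\Lambda^j_q$); the paper's route instead isolates the structural fact that the $k$-dependence enters only through $q\mapsto q^k$. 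The bookkeeping you flag at the end (well-definedness of the specialisations $t=1$ and $q=1$ and their compatibility with adjoints) is precisely what Proposition \ref{pro:restriction1} and the corollary following Definition \ref{df:restriction} provide, so there is no gap.
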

\noindent For $k=1$ the above theorem is \cite[Theorem 5.1]{Boissiere}.
Using  $\omega\E_{F}\omega=\nabla_F^*$ (Definition \ref{df:restriction}) and $\omega(p_\lambda)=(-1)^{l(\lambda)}p_\lambda$ (Definition \ref{df:w}), the above theorem can be reformulated in terms of the operator $\nabla_k^*$, as follows.  
\begin{pro} \label{pro:main1}
	Let $k\in \NN$ be a natural number. We have
	\begin{align} \label{w:8}
		\nabla_k^*=\sum_{\lambda}\frac{(-k)^{l(\lambda)-1}s(\lambda)}{\prod_{i\in\NN} a_i(\lambda)!} p_{s(\lambda)}\partial_{\lambda}\,.
	\end{align}
\end{pro}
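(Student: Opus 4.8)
The plan is to prove the equivalent statement for $\nabla_k^*$ by passing to the one-dimensional subtorus $\Tq$, that is, by first setting $t=1$. The structural fact I would establish is that \emph{after this restriction $\nabla_k^q$ becomes a derivation of $\Lambda_q$}. Indeed, by Proposition \ref{pro:t=1} we have $\tilde H_\lambda(q,1)=c_\lambda(q)\,v_\lambda$, so the defining relation $\nabla_F^{q,t}\tilde H_\lambda=F_\lambda\tilde H_\lambda$ restricts to $\nabla_k^q v_\lambda=(\psi^kB)^q_\lambda\,v_\lambda$. Two facts now conspire: the eigenvalues are additive, $(\psi^kB)^q_{\lambda\cdot\mu}=(\psi^kB)^q_\lambda+(\psi^kB)^q_\mu$ (Corollary \ref{cor:Adams}), while the eigenvectors are multiplicative, $v_{\lambda\cdot\mu}=v_\lambda v_\mu$ (Proposition \ref{pro:v}). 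Since $v_\lambda=\prod_i v_{(\lambda_i)}$ and the $v_{(j)}$ generate $\Lambda_q$ as an algebra, the derivation $\delta$ determined by $\delta(v_{(j)})=[j]_{q^k}v_{(j)}$ satisfies $\delta(v_\lambda)=\big(\sum_i[\lambda_i]_{q^k}\big)v_\lambda=\nabla_k^q(v_\lambda)$ on the whole basis $\{v_\lambda\}$; hence $\nabla_k^q=\delta$ is a derivation. This is the Leibniz rule that makes the computation effective.

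A derivation is determined by its values on the generators $p_m$, which I extract with a generating function. Using the standard plethystic identity $p_r[\tfrac{X}{1-q}]=\tfrac{p_r}{1-q^r}$ one gets
$$\sum_{j\ge0}v_{(j)}z^j=\exp\!\Big(\textstyle\sum_{r\ge1}\frac{p_r z^r}{r(1-q^r)}\Big)=:\exp(G(z)).$$
On one hand $\nabla_k^q$ multiplies the coefficient of $z^j$ by $[j]_{q^k}=\frac{1-q^{kj}}{1-q^k}$; since multiplication by $q^{kj}$ is the substitution $z\mapsto q^kz$, this gives $\nabla_k^q\exp(G(z))=\frac{1}{1-q^k}\big(\exp G(z)-\exp G(q^kz)\big)$. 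On the other hand the derivation property gives $\nabla_k^q\exp(G(z))=\nabla_k^q(G(z))\,\exp G(z)$ with $\nabla_k^q(G(z))=\sum_r\frac{z^r}{r(1-q^r)}\nabla_k^q(p_r)$. Dividing and using $G(q^kz)-G(z)=-\sum_r\frac{p_rz^r}{r}[k]_{q^r}$ yields
$$\sum_{r\ge1}\frac{\nabla_k^q(p_r)}{r(1-q^r)}z^r=\frac{1}{1-q^k}\Big(1-\exp\!\big(-\textstyle\sum_r\frac{p_rz^r}{r}[k]_{q^r}\big)\Big).$$
Expanding the exponential and grouping ordered tuples of parts into partitions (which produces the multinomial factor $\tfrac{l(\mu)!}{\prod_i a_i(\mu)!}$ cancelling the $\tfrac1{l(\mu)!}$) gives $\nabla_k^q(p_m)$ as an explicit sum over $\mu\vdash m$.

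Finally I specialize and dualize. The restriction compatibility recorded after Definition \ref{df:restriction} gives $\nabla_k(p_m)=\nabla_k^q(p_m)|_{q=1}$, and $\nabla_k$ is again a derivation. Using $[k]_{q^r}|_{q=1}=k$ and $\frac{1-q^m}{1-q^k}\big|_{q=1}=\frac mk$ collapses the sum to
$$\nabla_k(p_m)=m^2\sum_{\mu\vdash m}\frac{(-k)^{l(\mu)-1}}{\prod_i a_i(\mu)!\,\prod_j\mu_j}\,p_\mu.$$
Writing $\nabla_k=\sum_m\nabla_k(p_m)\frac{\partial}{\partial p_m}$ and taking the adjoint via Proposition \ref{pro:adjoint} — where $\frac{\partial}{\partial p_m}$ is adjoint to multiplication by $\frac{p_m}{m}$, and multiplication by $p_\mu$ is adjoint to $(\prod_j\mu_j)\partial_\mu$ — turns each $p_\mu$ into $(\prod_j\mu_j)\partial_\mu$, so the factors $\prod_j\mu_j$ cancel; after reindexing by $m=s(\lambda)$ this is exactly $\nabla_k^*=\sum_\lambda\frac{(-k)^{l(\lambda)-1}s(\lambda)}{\prod_i a_i(\lambda)!}p_{s(\lambda)}\partial_\lambda$.

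The conceptual heart is the first paragraph: recognizing that setting $t=1$ converts multiplication by $\psi^kB$ into a derivation, which is precisely the simplification the coordinate subtorus $\Tq$ provides. The main computational obstacle is the generating-function bookkeeping of the second paragraph, together with verifying that the $q\to1$ limit in the third is genuinely regular despite the apparent poles $\tfrac{1}{1-q^r}$ appearing in $G$.
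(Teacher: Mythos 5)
Your proof is correct, and its skeleton coincides with the paper's: restrict to the subtorus $\Tq$ by setting $t=1$, observe that the $v_\lambda$ form a multiplicative eigenbasis with additive eigenvalues so that $\nabla_k^q$ is a derivation, compute $\nabla_k^q$ on the generators $p_m$, pass to the limit $q\to 1$, and dualize via Proposition \ref{pro:adjoint}. Where you genuinely diverge is in the computation of $\nabla_k^q(p_m)$, i.e.\ the paper's Corollary \ref{cor:pn}. The paper first computes the $k=1$ case from the explicit plethystic eigenoperator $D$ of Equation \eqref{w:5} (Proposition \ref{pro:k=1}), then observes that $(\psi^kB)^q_\mu(q)=B^q_\mu(q^k)$ forces the answer for general $k$ to have the shape $\sum_{\lambda}(1-q^n)\,C_\lambda(q^k)\prod_i(1-q^{\lambda_i})^{-1}p_\lambda$ with $k$-independent polynomials $C_\lambda$ (Proposition \ref{pro:poly}), and finally solves for $C_\lambda$ by comparison with the $k=1$ case (Corollary \ref{cor:C}). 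You instead run the derivation through the generating function $\sum_j v_{(j)}z^j=\exp\bigl(\sum_r p_rz^r/(r(1-q^r))\bigr)$ and extract $\nabla_k^q(p_m)$ for all $k$ simultaneously; your expansion lands on exactly the expression of Corollary \ref{cor:pn} (with $z_\mu=\prod_i a_i(\mu)!\prod_j\mu_j$), the $q\to1$ limit is manifestly regular there since every factor is of the form $(1-q^a)/(1-q^b)$, and your adjoint step reproduces the paper's passage from Proposition \ref{pro:nabla} to Proposition \ref{pro:main1} verbatim. Your route is more self-contained --- it never invokes the operator $D$ or bootstraps from $k=1$ --- at the cost of the multinomial bookkeeping in expanding the exponential; the paper's route buys a shorter computation by importing the known Macdonald eigenoperator together with the clean observation that the Adams operation merely rescales $q$ in the eigenvalues.
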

The next section is devoted to the proof of the above result.
\begin{rem}
	Our proof is purely combinatorial, yet it has a geometric motivation. It is based on the substitution $t=1$, which corresponds to the restriction to $\Tq$-equivariant $\KTh$-theory. The fixed point set $\Hilb{n}^\Tq$ is still relatively simple. Proposition \ref{pro:localised} (2) provides an eigenbasis of the operator $-\otimes F$ acting on $S^{-1}_\Tq\KTh_\Tq(\Hilb{n})$. On the other hand, the fixed point set $\Hilb{n}^\Tq$ is much bigger than $\Hilb{n}^\TT$. This implies that the combinatorics of the tangent weights simplifies after the restriction to $\Tq$. 
	Therefore, the formulas for $\E^q_{F}$ and $\nabla_F^q$ should also simplify.  
\end{rem}
\begin{rem}
	The main step of our proof is showing that the maps $\nabla_k$ and $\nabla_k^q$ satisfy the Leibniz rule in Proposition \ref{pro:Leibniz}:
	$$\nabla_k^{q}(fg)=\nabla_k^{q}(f)\cdot g+f\cdot\nabla_k^{q}(g)\,. $$
	For the non-equivariant map $\nabla_k$ this can be predicted from the conjectured Formula \eqref{w:8}. Multiplication by $\frac{p_m}{m}$ and differentiation $\frac{\partial}{\partial p_m}$ are adjoint. Therefore, we expect that every summand of $\nabla_k$ contains a single differentiation.
\end{rem}
\begin{rem}
	The $\TT$-equivariant map $\nabla_k^{q,t}$ does not satisfy the Leibniz rule even for $p_1^2$.
	This confirms that restriction to the subtorus $\Tq$ simplifies the problem, not only the proofs.
\end{rem}
\begin{rem}
	In \cite{Boissiere}, Theorem \ref{tw:main1} for $k=1$ is stated in the equivalent version
	$$
	\E_B=\operatorname{Coeff}\left(t^0;\sum_{r\ge 1}rp_rt^r\cdot \exp\left(\sum_{r\ge 1}\frac{\partial}{\partial p_r}t^{-r}\right)\right)\,.
	$$
	It is possible to write the result for an arbitrary $k$ in this form
	$$
	\E_{\psi^kB}=k^{-1}\cdot\operatorname{Coeff}\left(t^0;\sum_{r\ge 1}rp_rt^r\cdot \exp\left(k\cdot\sum_{r\ge 1}\frac{\partial}{\partial p_r}t^{-r}\right)\right)\,.
	$$
\end{rem}

\begin{rem}
	Contrary to the cohomological case, the formula for $\E_{\psi^kB}$ does not follow directly from the formula for $\E_{B}$, as the K-theory is not graded. It is tempting to use the Chern character to transfer the results from cohomology to K-theory, yet the relation between the Chern character and the McKay correspondence is surprisingly complicated, \cite[Theorem 1.1]{Boissiere2}.
\end{rem}

\section{Restriction to the one-dimensional torus} \label{s:t=1}
\noindent In this section we study the restriction of the operators $\nabla^{q,t}_F$ to $t=1$. It turns out that the restricted operator is much simpler than the original one. For $F=\psi^kB$ it satisfies the Leibniz rule (Proposition \ref{pro:Leibniz}), which allows us to obtain a closed formula for the operators $\nabla_k$ (Proposition \ref{pro:nabla}). We use Proposition \ref{pro:adjoint} to compute the dual $\nabla^*_k$. Recall that we defined $v_\lambda = h_{\lambda}\left[\frac{X}{1-q}\right] $ (Definition \ref{df:v}).
\begin{pro}
	\label{pro:eigen}
	Let $\lambda$ be a partition. The element $v_\lambda$ is an eigenvector of $\nabla^q_F$ with eigenvalue $F^q_\lambda$.
\end{pro}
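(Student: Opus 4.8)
The plan is to derive the statement directly from the definition of $\nabla^{q,t}_F$ as the operator diagonalised by the modified Macdonald polynomials, combined with the restriction formula for $\tilde H_\lambda$ at $t=1$ (Proposition \ref{pro:t=1}) and the compatibility of $\nabla^{q,t}_F$ with the substitution $t=1$. The key observation is that, after setting $t=1$, the Macdonald eigenvector $\tilde H_\lambda$ collapses to a scalar multiple of $v_\lambda$, so the eigenrelation for $\nabla^{q,t}_F$ descends to an eigenrelation for $\nabla^q_F$ with eigenvector $v_\lambda$ and eigenvalue $F^q_\lambda$.

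First I would record the identity $\tilde H_\lambda(q,1)=c_\lambda(q)\,v_\lambda$. Since the complete homogeneous symmetric function satisfies $h_\lambda=\prod_{i=1}^{l(\lambda)} h_{\lambda_i}$ and the plethystic substitution $-\left[\frac{X}{1-q}\right]$ is a ring homomorphism, Definition \ref{df:v} gives $v_\lambda=\prod_{i=1}^{l(\lambda)} h_{\lambda_i}\!\left[\frac{X}{1-q}\right]$; comparing with Proposition \ref{pro:t=1} yields $\tilde H_\lambda(q,1)=c_\lambda(q)\,v_\lambda$, where $c_\lambda(q)=\prod_{i=1}^{l(\lambda)}\prod_{j=1}^{\lambda_i}(1-q^j)$ is a nonzero element of $\QQ(q)$.

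Next I would start from the defining eigenrelation $\nabla^{q,t}_F\tilde H_\lambda=F_\lambda\,\tilde H_\lambda$ and restrict to $t=1$. On the right, $(F_\lambda\,\tilde H_\lambda)_{|t=1}=F^q_\lambda\cdot c_\lambda(q)\,v_\lambda$, using $F^q_\lambda=(F_\lambda)_{|t=1}$ and the identity above. On the left, since $\tilde H_\lambda$ is a polynomial in $q,t$ its restriction to $t=1$ is defined, so the corollary relating $\nabla^{q,t}_F$ and $\nabla^q_F$ applies and gives $(\nabla^{q,t}_F\tilde H_\lambda)_{|t=1}=\nabla^q_F\big((\tilde H_\lambda)_{|t=1}\big)=c_\lambda(q)\,\nabla^q_F(v_\lambda)$. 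Equating the two sides and cancelling the nonzero scalar $c_\lambda(q)$ gives $\nabla^q_F(v_\lambda)=F^q_\lambda\,v_\lambda$, which is the claim.

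The argument is short and the only point requiring care is that every restriction to $t=1$ be legitimate: the scalar $c_\lambda(q)$ and the eigenvalue $F_\lambda\in\QQ[q^{\pm1},t^{\pm1}]$ have no pole at $t=1$, and $\tilde H_\lambda$ is polynomial, so all specialisations are well-defined and the cancellation of $c_\lambda(q)$ is valid in $\QQ(q)$. I therefore do not anticipate a genuine obstacle; the substance of the proposition is entirely contained in the collapse $\tilde H_\lambda(q,1)=c_\lambda(q)\,v_\lambda$ together with the fact that $\nabla^q_F$ is, by construction, the $t=1$ specialisation of $\nabla^{q,t}_F$.
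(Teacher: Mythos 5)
Your proof is correct and follows essentially the same route as the paper: both restrict the Macdonald eigenrelation $\nabla^{q,t}_F\tilde H_\lambda=F_\lambda\tilde H_\lambda$ to $t=1$, use Proposition \ref{pro:t=1} to identify $\tilde H_\lambda(q,1)$ with $c_\lambda(q)v_\lambda$, and cancel the nonzero scalar $c_\lambda(q)$. The paper's version is just a one-line chain of equalities; your added care about the legitimacy of the $t=1$ specialisations is implicit there.
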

\begin{proof}
	Let $c_\lambda(q) \in \QQ(q)$ be the constant from Proposition \ref{pro:t=1}. Then
	\begin{multline*}
		\nabla^q_F(c_\lambda(q)\cdot v_\lambda)=
		\nabla^q_F(\tilde{H}_\lambda(q,1))=
		\nabla^{q,t}_F(\tilde{H}_\lambda(q,t))_{|t=1}= \\ =
		(F_\lambda\cdot\tilde{H}_\lambda(q,t))_{|t=1}=
		F^q_\lambda\cdot \tilde{H}_\lambda(q,1)=
		F^q_\lambda\cdot c_\lambda(q)\cdot v_\lambda\,.
	\end{multline*}
	The first and the last equality follow from Proposition \ref{pro:t=1}. \\
	Thus, $c_\lambda(q)\cdot v_\lambda$ is an eigenvector of $\nabla^q_F$ with eigenvalue  $F^q_\lambda$. It follows that $v_\lambda$ is also an eigenvector with the same eigenvalue.
\end{proof}
\noindent The multiplicative behaviour of the basis $v_\lambda$ implies that the operators $\nabla_k^q$ satisfy the Leibniz rule.
\begin{pro} \label{pro:Leibniz}
	Fix a positive integer $k$. The operator $\nabla_k^{q}$ satisfies the Leibniz rule with regard to the multiplication in $\Lambda_q$, i.e. for $f,g\in \Lambda_q$, we have
	$$\nabla_k^{q}(fg)=\nabla_k^{q}(f)\cdot g+f\cdot\nabla_k^{q}(g)\,. $$
\end{pro}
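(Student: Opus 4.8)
The plan is to reduce the Leibniz rule to the multiplicativity of the basis $\{v_\lambda\}$ together with the additivity statement of Corollary \ref{cor:Adams}. By Proposition \ref{pro:v}, the set $\{v_\lambda\}_{\lambda\vdash n}$ is a $\QQ(q)$-basis of $\Lambda^n_q$, and these bases glue across all $n$ to a basis of $\Lambda_q$ that is multiplicatively closed, since $v_\lambda\cdot v_\mu=v_{\lambda\cdot\mu}$. Because both sides of the claimed identity $\nabla_k^q(fg)=\nabla_k^q(f)\cdot g+f\cdot\nabla_k^q(g)$ are $\QQ(q)$-bilinear in $(f,g)$, it suffices to verify the rule on pairs of basis elements $f=v_\lambda$, $g=v_\mu$.

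First I would recall from Proposition \ref{pro:eigen} that $v_\lambda$ is an eigenvector of $\nabla_k^q=\nabla^q_{\psi^kB}$ with eigenvalue $(\psi^kB_n)^q_\lambda$, where $n=s(\lambda)$; write this eigenvalue as $e_k(\lambda):=(\psi^kB_{s(\lambda)})^q_\lambda$ for brevity. Then the left-hand side is computed directly:
$$
\nabla_k^q(v_\lambda\cdot v_\mu)=\nabla_k^q(v_{\lambda\cdot\mu})=e_k(\lambda\cdot\mu)\,v_{\lambda\cdot\mu}\,.
$$
The right-hand side expands, again using that $v_\lambda,v_\mu$ are eigenvectors, to
$$
\nabla_k^q(v_\lambda)\cdot v_\mu+v_\lambda\cdot\nabla_k^q(v_\mu)=\bigl(e_k(\lambda)+e_k(\mu)\bigr)\,v_\lambda\cdot v_\mu=\bigl(e_k(\lambda)+e_k(\mu)\bigr)\,v_{\lambda\cdot\mu}\,.
$$
Thus the Leibniz rule on the pair $(v_\lambda,v_\mu)$ is equivalent to the scalar identity $e_k(\lambda\cdot\mu)=e_k(\lambda)+e_k(\mu)$, i.e. to the additivity of the eigenvalues.

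The key step, and the only substantive one, is precisely this additivity of eigenvalues, which is exactly the content of Corollary \ref{cor:Adams}:
$$
(\psi^kB_{n+m})^q_{\lambda\cdot\mu}=(\psi^kB_n)^q_\lambda+(\psi^kB_m)^q_\mu\,.
$$
I would emphasize that this is the crucial place where the restriction $t=1$ (i.e. passage to $\Tq$) is used: as the remark following Corollary \ref{cor:Adams} notes, the analogous additivity \emph{fails} after restricting to $q=1$, and indeed the full $\TT$-equivariant operator $\nabla_k^{q,t}$ does not satisfy the Leibniz rule. Geometrically, the eigenvalue attached to a fixed point is a sum over the boxes of the Young diagram of a monomial in $q$ alone once $t=1$, so concatenating diagrams via $\lambda\cdot\mu$ simply adds the two box-sums. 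I expect no real obstacle beyond correctly invoking Corollary \ref{cor:Adams} with the understanding that $F=\psi^kB$ here denotes the tautological bundle on the Hilbert scheme of the appropriate number of points; the operator $\nabla_k^q$ is defined uniformly across all $n$, so applying it to the product $v_\lambda v_\mu\in\Lambda^{s(\lambda)+s(\mu)}_q$ automatically refers to the eigenvalue on $\Hilb{s(\lambda)+s(\mu)}$, matching the left-hand side of Corollary \ref{cor:Adams}.
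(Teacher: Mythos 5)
Your proposal is correct and follows essentially the same route as the paper's proof: reduce by bilinearity to the multiplicative basis $v_\lambda$, use Proposition \ref{pro:eigen} to identify the eigenvalues, and invoke Corollary \ref{cor:Adams} for their additivity under $\lambda\cdot\mu$. No gaps.
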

\begin{proof}
	Let $F=\psi^kB$. It is enough to check the equation on the basis elements $v_\lambda$. We have 
	\begin{align*}
		\nabla_k^{q}(v_\lambda\cdot v_\mu)=
		\nabla_k^{q}(v_{\lambda\cdot\mu})=
		F^q_{\lambda\cdot\mu}v_{\lambda\cdot\mu}=&
		(F^q_{\lambda}+F^q_{\mu})\cdot v_{\lambda,\mu} \\
		=&F^q_{\lambda}v_\lambda\cdot v_\mu+v_\lambda\cdot F^q_{\mu}v_\mu \\
		=&\nabla_k^{q}(v_\lambda)\cdot v_\mu+ v_\lambda\cdot \nabla_k^{q}(v_\mu)\,.
	\end{align*}
	The first and the fourth equalities follow from Proposition \ref{pro:v}, the second and the fifth from Proposition \ref{pro:eigen} and the third from Corollary \ref{cor:Adams}.
\end{proof}
\begin{cor} \label{cor:split}
	Let $\lambda=(\lambda_1,\dots,\lambda_l)$ be a partition and $k\in \NN$ a natural number. 
	Denote by $\widehat{\lambda}_i$ the partition $\lambda$ without the $i$-th part. We have
		$$\nabla^q_k(p_\lambda)=\sum_{i=1}^{l} \nabla^q_k(p_{\lambda_i})p_{\widehat{\lambda}_i}\,. $$
\end{cor}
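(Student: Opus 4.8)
The plan is to deduce this directly from the Leibniz rule established in Proposition \ref{pro:Leibniz}. The only input needed is the elementary observation that the power-sum symmetric function factors as $p_\lambda = p_{\lambda_1}\,p_{\lambda_2}\cdots p_{\lambda_l}$, so that the statement is nothing but the familiar expansion of a derivation applied to a product of $l$ factors.

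First I would record the purely formal fact that any derivation $D$ on a commutative ring satisfies $D(a_1\cdots a_l)=\sum_{i=1}^l a_1\cdots D(a_i)\cdots a_l$. This follows by induction on $l$: the case $l=1$ is trivial, and for the inductive step one writes $p_\lambda = p_{\lambda_1}\cdot(p_{\lambda_2}\cdots p_{\lambda_l})$, applies Proposition \ref{pro:Leibniz} once to split off the first factor, and then applies the induction hypothesis to $\nabla_k^{q}(p_{\lambda_2}\cdots p_{\lambda_l})$. Since Proposition \ref{pro:Leibniz} asserts precisely that $\nabla_k^{q}$ is a derivation on $\Lambda_q$, this applies verbatim with $D=\nabla_k^{q}$ and $a_i=p_{\lambda_i}$.

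Then I would identify the surviving factors. Removing the $i$-th power sum from the product $p_{\lambda_1}\cdots p_{\lambda_l}$ leaves exactly $p_{\widehat{\lambda}_i}$ by the definition of power sums indexed by partitions, so each summand $p_{\lambda_1}\cdots\nabla_k^{q}(p_{\lambda_i})\cdots p_{\lambda_l}$ equals $\nabla_k^{q}(p_{\lambda_i})\,p_{\widehat{\lambda}_i}$. Summing over $i$ yields the claimed formula.

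There is no genuine obstacle here: once Proposition \ref{pro:Leibniz} is in hand, the corollary is entirely formal. The only point requiring minor care is the bookkeeping for repeated parts of $\lambda$. If $\lambda$ has equal parts, the terms sharing the same value $\lambda_i$ are nonetheless distinct summands indexed by the position $i$, and the notation $\widehat{\lambda}_i$ must be read as removal of one specific part at position $i$ rather than of all parts of that size. This reading is exactly what the stated sum over $i=1,\dots,l$ requires, so the formula holds as written.
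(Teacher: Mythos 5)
Your proof is correct and matches the paper's intent exactly: the corollary is stated without proof precisely because it is the immediate extension of the Leibniz rule of Proposition \ref{pro:Leibniz} to the product $p_\lambda=p_{\lambda_1}\cdots p_{\lambda_l}$ by induction on $l$. Your remark about reading $\widehat{\lambda}_i$ as removal of the part at position $i$ (not of all parts of that size) is the right reading and the only point needing care.
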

The above corollary implies that in order to provide a formula for the operator $\nabla^q_k$ it is enough to compute  $\nabla^q_k(p_n)$ for every $n\in \NN$. We start with the case $k=1$.
\begin{pro}\label{pro:k=1}
	We have
	$$\nabla^q_1(p_n)=\sum_{\lambda\vdash n}\znak{\lambda}\cdot \frac{1-q^n}{1-q}\cdot\frac{n}{z_\lambda}\cdot p_{\lambda}\,,\qquad \nabla_1(p_n)=\sum_{\lambda\vdash n}\znak{\lambda} \cdot \frac{n^2}{z_\lambda}\cdot p_{\lambda}\,.$$
\end{pro}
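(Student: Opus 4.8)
The plan is to exploit that $\nabla_1^q$ is a derivation with an explicit eigenbasis, and to package both facts into a single generating function in an auxiliary variable $w$. By Proposition~\ref{pro:Leibniz} the operator $\nabla_1^q$ is a degree-preserving derivation of $\Lambda_q$, and by Proposition~\ref{pro:eigen} each $v_{(m)}=h_m[\tfrac{X}{1-q}]$ (Definition~\ref{df:v}) is an eigenvector. The relevant eigenvalue is read off from the restriction formula for the tautological bundle: $(\psi^1 B)^q_{(m)}=\sum_{j=1}^m q^{j-1}=[m]_q$. Thus $\nabla_1^q$ is completely determined by the relations $\nabla_1^q(v_{(m)})=[m]_q\,v_{(m)}$, and by Corollary~\ref{cor:split} it suffices to extract from these the values $\nabla_1^q(p_n)$.

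First I would introduce $V(w):=\sum_{m\ge 0}v_{(m)}w^m$ and rewrite it exponentially. Applying the plethysm $-[\tfrac{X}{1-q}]$ to the classical identity $\sum_{m\ge0}h_m w^m=\exp(\sum_{k\ge1}\tfrac{p_k}{k}w^k)$ and using $p_k[\tfrac{X}{1-q}]=\tfrac{p_k}{1-q^k}$ gives
$$V(w)=\exp\Big(\sum_{k\ge1}\frac{p_k}{k(1-q^k)}\,w^k\Big).$$
Now I would compute $\nabla_1^q V(w)$ in two ways. Since $\nabla_1^q$ is a derivation, the chain rule for the exponential of a series with no constant term yields $\nabla_1^q V(w)=\Phi(w)\,V(w)$, where $\Phi(w):=\sum_{k\ge1}\tfrac{\nabla_1^q(p_k)}{k(1-q^k)}w^k$; on the other hand the eigenrelation gives $\nabla_1^q V(w)=\sum_{m\ge0}[m]_q\,v_{(m)}w^m$. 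Equating the two, dividing by the invertible series $V(w)$, and using $[m]_q=\tfrac{1-q^m}{1-q}$ together with $\sum_m q^m v_{(m)}w^m=V(qw)$, I obtain
$$\Phi(w)=\frac{1}{1-q}\Big(1-\frac{V(qw)}{V(w)}\Big).$$

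The crux is the evaluation of $V(qw)/V(w)$: from the exponential form, $\log V(qw)-\log V(w)=\sum_{k}\tfrac{p_k(q^k-1)}{k(1-q^k)}w^k=-\sum_k\tfrac{p_k}{k}w^k$, so the $q$-dependence telescopes away and $V(qw)/V(w)=\exp(-\sum_k\tfrac{p_k}{k}w^k)=\sum_{m\ge0}(-1)^m e_m w^m$. Hence $\Phi(w)=\tfrac{1}{1-q}\sum_{m\ge1}(-1)^{m-1}e_m w^m$, and comparing coefficients of $w^n$ gives $\nabla_1^q(p_n)=n[n]_q(-1)^{n-1}e_n$. Rewriting $e_n$ in the power-sum basis via $(-1)^{n-1}e_n=\sum_{\lambda\vdash n}\znak{\lambda}\tfrac{p_\lambda}{z_\lambda}$ and $[n]_q=\tfrac{1-q^n}{1-q}$ produces exactly the first displayed formula. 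The second formula follows by specialising $q\to 1$, since $\nabla_1$ is the restriction of $\nabla_1^q$ and $[n]_q\to n$, so the factor $\tfrac{1-q^n}{1-q}\cdot n$ becomes $n^2$.

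I expect the only genuinely content-bearing step to be the telescoping identity $V(qw)/V(w)=\sum_m(-1)^m e_m w^m$; everything else is bookkeeping in the power-sum and $e_n$-expansions. The one technical point to watch is the legitimacy of the chain rule for $\nabla_1^q$ on the formal series $V(w)$: this is justified degree by degree, since each coefficient of $w^n$ lies in the finite-dimensional space $\Lambda^n_q$ preserved by $\nabla_1^q$, so no convergence issue arises.
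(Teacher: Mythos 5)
Your proof is correct, but it takes a genuinely different route from the paper. The paper's proof works in the full two-parameter setting: it invokes the Macdonald eigenoperator $D$ of Equation \eqref{w:5}, computes $D(p_n)$ by direct plethystic coefficient extraction (the coefficient of $z^0$ and of $z^n$ in $\exp(-\sum_k z^kp_k/k)$), uses $\nabla_1^{q,t}=\frac{1-D}{(1-q)(1-t)}$, and then specializes $t\to 1$ and $q\to 1$. You instead stay entirely at $t=1$ and use only the section's own toolkit --- Proposition \ref{pro:eigen} (eigenrelation $\nabla^q_1(v_{(m)})=[m]_q v_{(m)}$, which is indeed the Example in Section \ref{s:Hilb} at $\lambda=(m)$) and the Leibniz rule of Proposition \ref{pro:Leibniz} --- packaged via the generating function $V(w)=\sum_m v_{(m)}w^m=\exp\bigl(\sum_k \frac{p_k}{k(1-q^k)}w^k\bigr)$; the computation then reduces to the telescoping identity $V(qw)/V(w)=\exp\bigl(-\sum_k\frac{p_k}{k}w^k\bigr)=\sum_m(-1)^me_mw^m$, and the expansion $(-1)^{n-1}e_n=\sum_{\lambda\vdash n}\znak{\lambda}p_\lambda/z_\lambda$ gives the stated formula; the $q\to1$ specialization is harmless since $\frac{1-q^n}{1-q}$ is a polynomial. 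I checked there is no circularity: Propositions \ref{pro:eigen} and \ref{pro:Leibniz} precede Proposition \ref{pro:k=1} in the paper and do not depend on it. What each approach buys: the paper's argument is a two-line coefficient extraction but imports the explicit formula for $D$ from Macdonald--Haiman theory; yours is self-contained modulo Section \ref{s:t=1} and vividly illustrates the paper's own thesis that the $t=1$ restriction trivializes the combinatorics. Moreover, your method generalizes verbatim to arbitrary $k$: replacing the eigenvalues $[m]_q$ by $(\psi^kB)^q_{(m)}=\frac{1-q^{km}}{1-q^k}$ yields $V(q^kw)/V(w)=\exp\bigl(-\sum_j\frac{p_j}{j}\cdot\frac{1-q^{kj}}{1-q^j}w^j\bigr)$ and hence Corollary \ref{cor:pn} directly, bypassing the interpolation argument of Proposition \ref{pro:poly} and Corollary \ref{cor:C} altogether.
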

\begin{proof}
Let $D$ be the operator defined by Equation \eqref{w:5}.  By definition
\begin{align*}
	D(p_n)&=
	\operatorname{Coeff} \left(z^0; \left(p_n+\frac{(1-q^n)(1-t^n)}{z^n}\right)\cdot \exp\left(\sum_{k\ge 1}\frac{-z^kp_k}{k} \right)\right) \\
	&=p_n\operatorname{Coeff} \left(z^0;\exp\left(\sum_{k\ge 1}\frac{-z^kp_k}{k} \right)\right)+ (1-q^n)(1-t^n)\operatorname{Coeff} \left(z^n;\exp\left(\sum_{k\ge 1}\frac{-z^kp_k}{k} \right)\right) \\
	&=p_n + (1-q^n)(1-t^n)\cdot\sum_{\lambda\vdash n}\frac{(-1)^{l(\lambda)}}{z_\lambda}p_\lambda
\end{align*}
We have $\nabla_1^{q,t}=\frac{1-D}{(1-q)(1-t)}$, therefore
$$\nabla_1^{q,t}(p_n)= \frac{(1-q^n)(1-t^n)}{(1-q)(1-t)}\cdot\sum_{\lambda\vdash n} \frac{\znak{\lambda}}{z_\lambda} p_\lambda \,.$$
To obtain the result, we compute the limit at $t\to 1$ and then at $q\to 1$.
\end{proof}
\noindent It turns out that the coefficients of $\nabla^q_1(p_n)$ determine the operators $\nabla^q_k$ for an arbitrary $k\in\NN$, as the next two propositions show.
\begin{pro} \label{pro:poly}
	There exist polynomials $C_\lambda(q)\in\QQ[q]$ such that for every $k\in \NN$ we have
	$$\nabla^q_k(p_n)=\sum_{\lambda\vdash n}\frac{(1-q^n)\cdot C_\lambda(q^k)}{\prod_{i=1}^{l(\lambda)}(1-q^{\lambda_i})}p_\lambda \in \Lambda^q_n\,.  $$
\end{pro}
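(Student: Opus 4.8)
The plan is to diagonalize $\nabla^q_k$ in the basis $\{v_\lambda\}$ and to carry out the whole computation inside a single generating series, rather than chasing individual coefficients. By Proposition \ref{pro:eigen} together with the Example computing $(\psi^kB_n)^q_\lambda$, the operator $\nabla^q_k$ acts on $v_\lambda$ by the scalar $(\psi^kB)^q_\lambda=\sum_{i=1}^{l(\lambda)}[\lambda_i]_{q^k}$; in particular it scales a single-part vector $v_{(m)}$ by $[m]_{q^k}=\frac{1-q^{km}}{1-q^k}$. Since $\nabla^q_k$ is a $\QQ(q)$-linear derivation (Proposition \ref{pro:Leibniz}), it is determined by its values on the generators $p_r$, so it is enough to compute $\nabla^q_k(p_n)$.

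Next I would introduce $H(z):=\sum_{m\ge 0}v_{(m)}z^m$. Combining $\sum_m h_mz^m=\exp(\sum_{r\ge1}\frac{p_r}{r}z^r)$ with the plethystic identity $p_r[\frac{X}{1-q}]=\frac{p_r}{1-q^r}$ gives the closed form $H(z)=\exp(\sum_{r\ge1}\frac{p_r}{r(1-q^r)}z^r)$. Applying $\nabla^q_k$ termwise and using the eigenvalues, on one side $\nabla^q_k(H(z))=\sum_m[m]_{q^k}v_{(m)}z^m=\frac{1}{1-q^k}(H(z)-H(q^kz))$, while on the other side, since $\nabla^q_k$ is a derivation, $\nabla^q_k(H(z))=(\sum_{r\ge1}\frac{\nabla^q_k(p_r)}{r(1-q^r)}z^r)H(z)$. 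Dividing by $H(z)$ produces the key identity
$$\sum_{r\ge1}\frac{\nabla^q_k(p_r)}{r(1-q^r)}z^r=\frac{1}{1-q^k}\left(1-\exp\Big(\sum_{r\ge1}\frac{(q^{kr}-1)p_r}{r(1-q^r)}z^r\Big)\right).$$

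Finally I would extract the coefficient of $z^n$ on both sides. Expanding the exponential, the contribution of a partition $\lambda\vdash n$ carries the factor $\prod_i\frac{q^{k\lambda_i}-1}{\lambda_i(1-q^{\lambda_i})}$, and the factorials produced by the exponential combine with the $\frac1{\lambda_i}$ factors into $z_\lambda=\prod_i a_i(\lambda)!\cdot\prod_i\lambda_i$; the $\frac{1}{r(1-q^r)}$ on the left-hand side contributes the factor $n(1-q^n)$. This yields
$$\nabla^q_k(p_n)=\sum_{\lambda\vdash n}(-1)^{l(\lambda)-1}\frac{n\,(1-q^n)\prod_i(1-q^{k\lambda_i})}{(1-q^k)\,z_\lambda\,\prod_i(1-q^{\lambda_i})}\,p_\lambda,$$
which is exactly the asserted form with $C_\lambda(q)=(-1)^{l(\lambda)-1}\frac{n}{z_\lambda}\cdot\frac{\prod_i(1-q^{\lambda_i})}{1-q}$. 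This $C_\lambda$ is a genuine polynomial because each of the $l(\lambda)\ge1$ factors $1-q^{\lambda_i}$ is divisible by $1-q$.

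The delicate point is the generating-function manipulation in the second step: one must justify applying $\nabla^q_k$ termwise to the exponential series $H(z)$—which is legitimate because $\nabla^q_k$ preserves the grading and acts on each finite-dimensional piece $\Lambda^n_q$, so every coefficient of $z^n$ is a finite expression—and verify that the eigenvalue sum repackages as $\frac{1}{1-q^k}(H(z)-H(q^kz))$. I would stress that this route is necessary: a purely abstract diagonalization argument (the eigenvalues are polynomials in $q^k$ and the transition matrices between $\{v_\mu\}$ and $\{p_\lambda\}$ are independent of $k$) only shows that each $p_\lambda$-coefficient is a polynomial in $q^k$ with coefficients in $\QQ(q)$; it does not by itself force the common factor $\frac{1-q^n}{\prod_i(1-q^{\lambda_i})}$ nor that the residual dependence is a $q$-independent polynomial $C_\lambda$. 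The generating function is what supplies this finer structure. As a consistency check, specializing $k=1$ recovers Proposition \ref{pro:k=1}.
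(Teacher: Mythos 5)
Your proof is correct, but it follows a genuinely different route from the paper's. The paper's own proof of Proposition \ref{pro:poly} is purely structural: it writes $p_n=\sum_\mu a_\mu h_\mu$, applies the plethysm $f\mapsto f[X/(1-q)]$ to land in the eigenbasis $\{v_\mu\}$, uses $\nabla^q_k v_\mu=B^q_\mu(q^k)v_\mu$, and expands back into power sums, setting $C_\lambda(q)=\sum_\mu a_\mu b_{\mu\lambda}B^q_\mu(q)$; this $C_\lambda$ is left implicit and is only identified later (Corollary \ref{cor:C}) by comparison with the $k=1$ case, which the paper computes separately from the Macdonald operator $D$ of Equation \eqref{w:5} (Proposition \ref{pro:k=1}). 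You instead invoke the Leibniz rule (Proposition \ref{pro:Leibniz}) and the generating function $H(z)=\sum_m v_{(m)}z^m=\exp\bigl(\sum_r \frac{p_r}{r(1-q^r)}z^r\bigr)$ to compute $\nabla^q_k(p_n)$ in closed form for all $k$ at once; your final formula is exactly Corollary \ref{cor:pn}, so you prove more than the proposition asks and make Proposition \ref{pro:k=1} and Corollary \ref{cor:C} unnecessary as inputs. All steps check out: the eigenvalue $[m]_{q^k}$ of $v_{(m)}$, the identity $\nabla^q_k H(z)=\frac{1}{1-q^k}\bigl(H(z)-H(q^kz)\bigr)$, the coefficient extraction via $z_\lambda=\prod_i a_i(\lambda)!\cdot\prod_i\lambda_i$, and the termwise application of the derivation (legitimate since each $z^n$-coefficient lies in the finite-dimensional piece $\Lambda^n_q$). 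One quibble with your closing remark: the generating function is not actually \emph{necessary} for the proposition as stated. The abstract diagonalization does force the claimed shape, because the transition coefficients $a_\mu, b_{\mu\lambda}$ are $q$-independent rationals, the eigenvalue $B^q_\mu(q^k)$ is a polynomial in $q^k$ with integer coefficients, and the plethysm $[X/(1-q)]$ is diagonal on power sums with eigenvalue $\prod_i(1-q^{\lambda_i})^{-1}$ --- this is exactly how the factor $(1-q^n)/\prod_i(1-q^{\lambda_i})$ and the $q$-independence of $C_\lambda$ arise in the paper. What your generating function genuinely buys is the explicit value of $C_\lambda$ without a separate $k=1$ computation.
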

\begin{proof}
	There exist rational numbers $a_{\mu}, b_{\mu\lambda}$ such that
	$$p_n=\sum_{\mu\vdash n}a_{\mu}h_{\mu}\,, \qquad h_{\mu}=\sum_{\lambda\vdash n}b_{\mu\lambda}p_{\lambda}\,. $$
	Applying the plethystic substitution $f\to f[\frac{X}{1-q}]$, we get
	$$\frac{p_n}{1-q^n}=\sum_{\mu\vdash n}a_{\mu}v_{\mu}\,.$$
	It follows that
	$$\nabla_k(p_n)=
	(1-q^n)\sum_{\mu\vdash n}a_{\mu}\nabla_k(v_{\mu})=
	(1-q^n)\sum_{\mu\vdash n}a_{\mu}B^q_{\mu}(q^k) v_{\mu}=
	(1-q^n)\sum_{\mu\vdash n}\sum_{\lambda\vdash n}\frac{a_{\mu}b_{\mu\lambda}B^q_{\mu}(q^k)}{\prod_{i=1}^{l(\lambda)}(1-q^{\lambda_i})} p_\lambda
	\,.
	$$
	Here we used $(\psi^kB)^q_{\mu}(q)=B^q_{\mu}(q^k)$. To obtain the result we set
	\[C_\lambda(q)=\sum_{\mu\vdash n}a_{\mu}b_{\mu\lambda}B^q_{\mu}(q)\,.  \qedhere \] 
\end{proof}
\begin{cor} \label{cor:C}
The polynomial $C_\lambda(q)$ is given by the formula
	$$C_\lambda(q)
	=\znak{\lambda}\cdot \frac{\prod_{i=1}^{l(\lambda)}(1-q^{\lambda_i})}{(1-q)}\cdot \frac{n}{z_\lambda}\,.$$
\end{cor}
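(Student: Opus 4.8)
The plan is to pin down the polynomial $C_\lambda$ by evaluating the general formula of Proposition \ref{pro:poly} at the single value $k=1$ and comparing the result with the explicit computation of $\nabla^q_1(p_n)$ obtained in Proposition \ref{pro:k=1}. The point is that $k$ enters Proposition \ref{pro:poly} only through the substitution $q\mapsto q^k$ into a \emph{fixed} polynomial $C_\lambda$, so knowing $\nabla^q_k(p_n)$ for the one value $k=1$ already determines $C_\lambda$ completely.

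First I would specialise Proposition \ref{pro:poly} to $k=1$, which gives
$$\nabla^q_1(p_n)=\sum_{\lambda\vdash n}\frac{(1-q^n)\cdot C_\lambda(q)}{\prod_{i=1}^{l(\lambda)}(1-q^{\lambda_i})}\,p_\lambda\,.$$
On the other hand, Proposition \ref{pro:k=1} provides the independent expression
$$\nabla^q_1(p_n)=\sum_{\lambda\vdash n}\znak{\lambda}\cdot\frac{1-q^n}{1-q}\cdot\frac{n}{z_\lambda}\cdot p_\lambda\,.$$
Since the power sums $\{p_\lambda\}_{\lambda\vdash n}$ form a $\QQ(q)$-basis of $\Lambda^n_q$, the coefficients of $p_\lambda$ in these two expressions must agree.

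Next I would equate the coefficient of each $p_\lambda$, obtaining the identity
$$\frac{(1-q^n)\cdot C_\lambda(q)}{\prod_{i=1}^{l(\lambda)}(1-q^{\lambda_i})}=\znak{\lambda}\cdot\frac{1-q^n}{1-q}\cdot\frac{n}{z_\lambda}$$
in the field $\QQ(q)$. Cancelling the nonzero factor $1-q^n$ and multiplying both sides by $\prod_{i=1}^{l(\lambda)}(1-q^{\lambda_i})$ yields exactly the asserted formula for $C_\lambda(q)$.

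There is essentially no obstacle here: the argument is a direct coefficient comparison made legitimate by the linear independence of the $p_\lambda$ and by working in $\QQ(q)$, where $1-q^n$ is invertible. The only point worth a passing remark is that the resulting right-hand side is genuinely an element of $\QQ[q]$, consistent with Proposition \ref{pro:poly}, because each factor $1-q^{\lambda_i}$ is divisible by $1-q$ (indeed $1-q^{\lambda_i}=(1-q)[\lambda_i]_q$), so the single division by $1-q$ leaves a polynomial.
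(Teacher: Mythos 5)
Your argument is correct and is exactly the paper's proof: specialise Proposition \ref{pro:poly} to $k=1$, compare coefficients of $p_\lambda$ with Proposition \ref{pro:k=1}, and solve for $C_\lambda(q)$. The closing remark that the result is indeed a polynomial because $1-q$ divides each $1-q^{\lambda_i}$ is a nice sanity check the paper leaves implicit.
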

\begin{proof}
	We compute the coefficient of $p_{\lambda}$ in $\nabla^q_{1}(p_n)$ using Propositions \ref{pro:k=1} and \ref{pro:poly}. Comparing the results we obtain
	$$	
	\frac{(1-q^n)\cdot C_\lambda(q)}{\prod_{i=1}^{l(\lambda)}(1-q^{\lambda_i})}
	=\znak{\lambda}\cdot\frac{1-q^n}{(1-q)}\cdot \frac{n}{z_\lambda}\,.
	$$
	This implies the statement. 
\end{proof}
\begin{cor}\label{cor:pn}
	We have
	$$\nabla^q_k(p_n)=
	\sum_{\lambda\vdash n}\znak{\lambda}\cdot\frac{1-q^{n}}{1-q^{k}}\cdot \left(\prod_{i=1}^{l(\lambda)}\frac{1-q^{k\lambda_i}}{1-q^{\lambda_i}}\right)\cdot \frac{n}{z_\lambda}\cdot p_{\lambda}\in \Lambda^q_n\,.$$
	Moreover,
	$$\nabla_k(p_n)=\sum_{\lambda\vdash n}\znak{\lambda}\cdot \frac{n^2\cdot k^{l(\lambda)-1}}{z_\lambda}\cdot p_{\lambda} \in \Lambda_n\,.$$
\end{cor}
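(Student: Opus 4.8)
The plan is to derive both formulas directly from Proposition \ref{pro:poly} together with the explicit value of $C_\lambda(q)$ recorded in Corollary \ref{cor:C}; no new geometric input is required, so the argument is essentially a bookkeeping step followed by a one-variable limit.

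First I would establish the formula for $\nabla^q_k(p_n)$. Since Proposition \ref{pro:poly} guarantees that $C_\lambda$ is a genuine polynomial in $q$, I may substitute $q^k$ for $q$ in the closed form of Corollary \ref{cor:C}, obtaining
$$C_\lambda(q^k)=\znak{\lambda}\cdot\frac{\prod_{i=1}^{l(\lambda)}(1-q^{k\lambda_i})}{1-q^k}\cdot\frac{n}{z_\lambda}\,.$$
Feeding this into the expression $\nabla^q_k(p_n)=\sum_{\lambda\vdash n}\frac{(1-q^n)C_\lambda(q^k)}{\prod_{i=1}^{l(\lambda)}(1-q^{\lambda_i})}p_\lambda$ of Proposition \ref{pro:poly}, the factor $\prod_i(1-q^{k\lambda_i})$ produced by $C_\lambda(q^k)$ pairs off against the denominator $\prod_i(1-q^{\lambda_i})$ to give the product $\prod_i\frac{1-q^{k\lambda_i}}{1-q^{\lambda_i}}$, and collecting the remaining scalars yields exactly the first displayed identity. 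This step is purely algebraic.

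For the second formula I would pass from $\nabla^q_k$ to its $q=1$ restriction $\nabla_k$. The corollary following Theorem \ref{tw:B1} ensures that $\nabla_k(p_n)=\nabla^q_k(p_n)_{|q=1}$, since $p_n\in\Lambda^n$ restricts to itself. It then suffices to take the limit $q\to 1$ of each coefficient in the first formula. Using the elementary limit $\lim_{q\to 1}\frac{1-q^m}{1-q}=m$, every factor $\frac{1-q^{k\lambda_i}}{1-q^{\lambda_i}}$ tends to $k$, so the product tends to $k^{l(\lambda)}$, while $\frac{1-q^n}{1-q^k}$ tends to $\frac{n}{k}$. Multiplying these together with $\frac{n}{z_\lambda}$ gives $\frac{n}{k}\cdot k^{l(\lambda)}\cdot\frac{n}{z_\lambda}=\frac{n^2 k^{l(\lambda)-1}}{z_\lambda}$, which is the asserted coefficient.

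The hard part is negligible here; the only subtlety worth checking is that the $q=1$ specialization is legitimate, i.e. that each coefficient, a priori a rational function in $q$, is regular at $q=1$ so that its limit agrees with the restriction. This is immediate because every factor $\frac{1-q^{k\lambda_i}}{1-q^{\lambda_i}}$ and $\frac{1-q^n}{1-q^k}$ has a removable singularity at $q=1$, in accordance with the general fact (again the corollary after Theorem \ref{tw:B1}) that $\nabla^q_k(p_n)$ admits a well-defined restriction to $q=1$.
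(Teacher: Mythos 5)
Your proof is correct and follows exactly the paper's route: the first identity is obtained by substituting the closed form of $C_\lambda(q^k)$ from Corollary \ref{cor:C} into Proposition \ref{pro:poly}, and the second by taking the limit $q\to 1$. The paper's own proof is a two-line version of the same argument, so your additional check that the specialization at $q=1$ is legitimate is a welcome but inessential elaboration.
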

\begin{proof}
	The first part is a direct consequence of Corollary \ref{cor:C} and Proposition \ref{pro:poly}. The~second is obtained by computing the limit at $q\to1$.
\end{proof}
\begin{pro}\label{pro:nabla}
	We have
	\begin{align} \label{w:9}
		\nabla_k=\sum_{\lambda}\frac{(-k)^{l(\lambda)-1}s(\lambda)^2}{z_\lambda} \cdot p_{\lambda} \frac{\partial}{\partial p_{s(\lambda)}}\,.
	\end{align}
\end{pro}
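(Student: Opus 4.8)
The plan is to exploit the fact that $\nabla_k$ is a derivation of the polynomial algebra $\Lambda=\QQ[p_1,p_2,\dots]$, so that it is completely determined by its values on the generators $p_n$, which have already been computed in Corollary \ref{cor:pn}. The whole argument is then a combination of the Leibniz rule, the universal property of derivations, and a re-indexing of a double sum.

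First I would establish that $\nabla_k\colon\Lambda\to\Lambda$ satisfies the Leibniz rule with respect to multiplication. Proposition \ref{pro:Leibniz} provides this for the equivariant operator $\nabla_k^q$ on $\Lambda_q$, and the corollary following Theorem \ref{tw:B1} identifies $\nabla_k$ with the specialisation of $\nabla_k^q$ at $q=1$. Hence, for $f,g\in\Lambda$, restricting the identity $\nabla_k^q(fg)=\nabla_k^q(f)\,g+f\,\nabla_k^q(g)$ to $q=1$ yields $\nabla_k(fg)=\nabla_k(f)\,g+f\,\nabla_k(g)$. Here it is essential to remember that $\nabla_k$ is a single operator on the whole graded ring $\Lambda=\bigoplus_n\Lambda^n$ assembled from the maps $\nabla_{\psi^kB_n}$ (the number of points being intentionally suppressed), so that the Leibniz rule genuinely relates different graded pieces.

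Next I would invoke the standard fact that a derivation $D$ of the polynomial ring $\QQ[p_1,p_2,\dots]$ is uniquely recovered from its values on the generators through $D=\sum_{n\ge1}D(p_n)\frac{\partial}{\partial p_n}$, which one checks on a monomial $p_\lambda$ and which is exactly the degree-preserving reformulation of Corollary \ref{cor:split}. Applying this to $D=\nabla_k$ gives $\nabla_k=\sum_{n\ge1}\nabla_k(p_n)\frac{\partial}{\partial p_n}$. Substituting the closed formula $\nabla_k(p_n)=\sum_{\lambda\vdash n}\znak{\lambda}\frac{n^2k^{l(\lambda)-1}}{z_\lambda}p_\lambda$ from Corollary \ref{cor:pn} and noting that every inner summand satisfies $s(\lambda)=n$, the double sum over $n$ and over $\lambda\vdash n$ collapses into a single sum over all partitions $\lambda$ with $n=s(\lambda)$ and $\frac{\partial}{\partial p_n}=\frac{\partial}{\partial p_{s(\lambda)}}$. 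Finally, using $(-1)^{l(\lambda)-1}k^{l(\lambda)-1}=(-k)^{l(\lambda)-1}$ transforms the expression into Formula \eqref{w:9}.

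The argument is essentially bookkeeping: all the genuine content is already packaged in the Leibniz rule of Proposition \ref{pro:Leibniz} and in the evaluation $\nabla_k(p_n)$ of Corollary \ref{cor:pn}. The only points demanding care are the transfer of the Leibniz identity from the equivariant operator $\nabla_k^q$ to the non-equivariant $\nabla_k$ via the $q\to1$ specialisation, and the correct collapsing of the indices; neither constitutes a real obstacle.
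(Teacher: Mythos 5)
Your proposal is correct and follows essentially the same route as the paper: the paper's proof also observes that both sides of \eqref{w:9} are derivations of $\Lambda$ (via Proposition \ref{pro:Leibniz} specialised at $q=1$), hence it suffices to compare them on the generators $p_n$, where Corollary \ref{cor:pn} gives agreement. Your version merely makes explicit the standard identity $D=\sum_n D(p_n)\frac{\partial}{\partial p_n}$ and the re-indexing of the double sum, which the paper leaves implicit.
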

\begin{proof}
	Both sides of Equation \eqref{w:9} are endomorphisms of $\Lambda$ which satisfy the Leibniz rule.  Therefore, it is enough to check this equality evaluated on the ring generators of $\Lambda$. Corollary \ref{cor:pn} imply that both sides have the same value on the elements $p_n$.
\end{proof}

As a consequence we obtain a proof of Proposition \ref{pro:main1}, which in turn is a reformulation of the main result, Theorem \ref{tw:main1}, i.e. for any $k\in \NN$ we have
$$
\nabla_k^*=\sum_{\lambda}\frac{(-k)^{l(\lambda)-1}s(\lambda)}{\prod_{i\in\NN} a_i(\lambda)!} p_{s(\lambda)}\partial_{\lambda}\,.
$$
\begin{proof}[Proof of Proposition \ref{pro:main1}]
	By Proposition \ref{pro:adjoint} (2), for any partition $\lambda$ we have
	$$
	\left(\frac{s(\lambda)^2}{z_\lambda} \cdot p_{\lambda}\frac{\partial}{\partial p_{s(\lambda)}}\right)^*=
	\frac{s(\lambda)^2}{z_\lambda} \cdot \frac{\prod_{i=1}^{l(\lambda)}\lambda_i}{s(\lambda)}\cdot p_{s(\lambda)}\partial_{\lambda}=
	\frac{s(\lambda)}{\prod_{i\in\NN} a_i(\lambda)!} \cdot  p_{s(\lambda)}\partial_{\lambda}\,,
	$$
	here we treat $p_\lambda$ as a multiplication map $\Lambda \to \Lambda$. Multiply the above expression by $(-k)^{l(\lambda)-1}$ and sum over all partitions $\lambda$. Using Proposition \ref{pro:nabla}, the left hand side is equal to $\nabla_k^*$. The right hand side gives the desired formula.
\end{proof}

\begin{rem}
	An analogous reasoning may be repeated for the operator $\nabla^q_k$. We obtain formulas
	\begin{align*}
	\nabla^q_k&=\sum_{\lambda} \frac{(-1)^{l(\lambda)-1}s(\lambda)}{z_\lambda}
	\cdot \frac{1-q^{s(\lambda)}}{1-q^{k}}\cdot \left(\prod_{i=1}^{l(\lambda)}\frac{1-q^{k\lambda_i}}{1-q^{\lambda_i}}\right)
	\cdot
	p_{\lambda}\frac{\partial}{\partial p_{s(\lambda)}}\,,
	\\
	(\nabla^q_k)^*&=\sum_{\lambda} \frac{(-1)^{l(\lambda)-1}}{\prod_{i\in\NN} a_i(\lambda)!}
	\cdot \frac{1-q^{s(\lambda)}}{1-q^{k}}\cdot \left(\prod_{i=1}^{l(\lambda)}\frac{1-q^{k\lambda_i}}{1-q^{\lambda_i}}\right)
	\cdot p_{s(\lambda)}\partial_{\lambda}\,.
	\end{align*}
\end{rem}

\section{Multiplicative structure} \label{s:mult}
The tensor product in the K-theory of the Hilbert scheme induces a product on the space of symmetric functions. In this section we provide an explicit formula for this product in the power sum basis. 
\begin{df}
	The multiplication $\odot:\Lambda^n\times\Lambda^n\to \Lambda^n$ is defined as the map induced by the tensor product in $\KTh(\Hilb{n})$, i.e.
	$$ f \odot g = \Theta^{-1}(\Theta(f)\otimes\Theta(g))\,.$$ 
	The multiplication $\odot_q:\Lambda_q^n\times\Lambda_q^n\to \Lambda_q^n$ is defined as the map induced by the tensor product in $S_\Tq^{-1}\KTh^\Tq(\Hilb{n})$, i.e.
	$$ f \odot_q g = \Theta_q^{-1}(\Theta_q(f)\otimes\Theta_q(g))\,.$$
\end{df}
 \begin{df}
 	Let $\lambda_1,\lambda_2$ and $\mu$ be partitions of $n$. The structure constants $c^\mu_{\lambda_1,\lambda_2} \in \QQ$ and the equivariant structure constants $c^\mu_{\lambda_1,\lambda_2}(q) \in \QQ(q)$ are defined by 
 	$$
 	p_{\lambda_1}\odot p_{\lambda_2}= \sum_{\mu\vdash n} c^\mu_{\lambda_1,\lambda_2} p_\mu\,,\qquad
 	p_{\lambda_1}\odot_q p_{\lambda_2}= \sum_{\mu\vdash n} c^\mu_{\lambda_1,\lambda_2}(q) p_\mu\,.
 	$$
 \end{df}
The product $\odot$ is fully determined by its structure constants. We give a formula for the constant $c^\mu_{\lambda_1,\lambda_2}$ as a coefficient of a certain power series. First we introduce some notation.
For a power series $W(q)\in \QQ[[q-1]]$ and a natural number $m$, we write
$\operatorname{Coeff}_{q-1}(m;W(q))$
for the coefficient of $W(q)$ at $(q-1)^m$. We use the function
$$W_\lambda(q)= (q-1)^n\left\langle s_n, h_\lambda\left[\frac{X}{q-1}\right]\right\rangle\,.$$
An easy computation (cf. Example \ref{ex:plethysm} (4)) shows that
$$W_\lambda(q)=\sum_{\nu\vdash n} (q-1)^{n-l(\nu)} \frac{\left\langle p_{\nu}, h_\lambda\right\rangle}{z_\nu} \cdot  \Partq{\nu}^{-1}\,.$$
 See Definition \ref{df:partq} for the symbol $\Partq{\nu}$.
 The inner product $\langle p_{(1)^n}, h_\lambda\rangle$ is non-zero for any $\lambda$, thus $W_\lambda(q)$ has a non-zero restriction at $q=1$.

\begin{tw} \label{tw:main2}
	Let $\lambda_1,\lambda_2$ and $\mu$ be partitions of $n$. Then
	$$ c^\mu_{\lambda_1,\lambda_2}=\operatorname{Coeff}_{q-1}
	\left(
	{l(\lambda_1)+l(\lambda_2)-n-l(\mu);
	\frac{\Partq{\mu}}
	{z_\mu\Partq{\lambda_1}\Partq{\lambda_2}}\cdot\sum_{\nu\vdash n} \frac{\langle p_{\lambda_1},h_\nu\rangle\langle p_{\lambda_2},h_\nu\rangle\langle p_\mu,m_\nu\rangle}{W_\nu(q)} 
	}
	\right)
	$$
\end{tw}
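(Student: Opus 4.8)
The plan is to compute the \emph{equivariant} structure constants $c^\mu_{\lambda_1,\lambda_2}(q)$ by diagonalizing $\odot_q$ in the basis of fixed components, and then to recover $c^\mu_{\lambda_1,\lambda_2}$ by specializing $q\to 1$. By Proposition \ref{pro:localised}(2) (via Corollary \ref{cor:fixed}(2)), the fundamental classes $[\nu]$ of the components of $\Hilb{n}^{\Tq}$ form a basis of $S^{-1}_{\Tq}\KTh_{\Tq}(\Hilb{n})$; since these components are affine spaces, the self-intersection formula gives $[\nu]\otimes[\nu']=\delta_{\nu\nu'}\eu_\nu[\nu]$ with $\eu_\nu\in\QQ(q)$ the K-theoretic Euler class of the normal bundle. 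Hence $w_\nu:=\Theta_q^{-1}([\nu])$ diagonalizes $\odot_q$ and $\{w_\nu/\eu_\nu\}$ are orthogonal idempotents. Since $\E^q_F=\omega(\nabla^q_F)^*\omega$ (Definition \ref{df:restriction}, $\omega$ being a self-adjoint involution), Proposition \ref{pro:eigen} together with $\tilde H_\nu(q,1)=c_\nu(q)v_\nu$ (Proposition \ref{pro:t=1}) identifies $w_\nu$, up to a scalar $c'_\nu$, with $\omega(u_\nu)$, where $u_\nu$ is the basis dual to $v_\nu$. By self-adjointness of the plethysm $f\mapsto f[\tfrac{X}{1-q}]$ (Example \ref{ex:plethysm}(3)) and $\langle h_\nu,m_\mu\rangle=\delta_{\nu\mu}$, one gets $u_\nu=m_\nu[X(1-q)]$.

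First I would record the transition coefficients. Using $p_\kappa[\tfrac{X}{q-1}]=p_\kappa/((q-1)^{l(\kappa)}\Partq{\kappa})$ and $p_\kappa[X(1-q)]=(1-q)^{l(\kappa)}\Partq{\kappa}\,p_\kappa$, and the fact that $\omega(v_\nu)=v_\nu[-X]=h_\nu[\tfrac{X}{q-1}]$, expansion in the power-sum basis gives
$$\langle p_\lambda,\omega(v_\nu)\rangle=\frac{\langle p_\lambda,h_\nu\rangle}{(q-1)^{l(\lambda)}\Partq{\lambda}}\,,\qquad \langle \omega(u_\nu),p_\mu\rangle=(q-1)^{l(\mu)}\Partq{\mu}\,\langle p_\mu,m_\nu\rangle\,.$$
Writing $p_{\lambda_i}=\sum_\nu r_\nu(\lambda_i)\,\tfrac{w_\nu}{\eu_\nu}$ with $r_\nu(\lambda_i)=\tfrac{\eu_\nu}{c'_\nu}\langle p_{\lambda_i},\omega(v_\nu)\rangle$, multiplying the idempotent expansions, and pairing with $p_\mu$ then yields
$$c^\mu_{\lambda_1,\lambda_2}(q)=\frac{\Partq{\mu}}{z_\mu\Partq{\lambda_1}\Partq{\lambda_2}}\,(q-1)^{l(\mu)-l(\lambda_1)-l(\lambda_2)}\sum_{\nu\vdash n}\frac{\eu_\nu}{c'_\nu}\,\langle p_{\lambda_1},h_\nu\rangle\langle p_{\lambda_2},h_\nu\rangle\langle p_\mu,m_\nu\rangle\,,$$
where all $\nu$-independent signs and powers of $(q-1)$ have been collected into the explicit prefactor.

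The remaining input is the normalization $\eu_\nu/c'_\nu$, which I would pin down through the unit of $\odot_q$, namely $\Theta_q^{-1}(\O_{\Hilb{n}})=\sum_\nu w_\nu/\eu_\nu$. Under the McKay correspondence the structure sheaf corresponds to the trivial $\Perm_n$-representation $\mathbf 1$ at the origin, whose Frobenius characteristic is $\Fr(\mathbf 1)=h_n=s_n$; thus the unit is $h_n$. Pairing it with $\omega(v_\nu)$ gives $\langle h_n,\omega(v_\nu)\rangle=\langle s_n,h_\nu[\tfrac{X}{q-1}]\rangle=W_\nu(q)/(q-1)^n$, while the idempotent decomposition of the unit gives $\langle h_n,\omega(v_\nu)\rangle=c'_\nu/\eu_\nu$. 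Hence $\eu_\nu/c'_\nu=(q-1)^n/W_\nu(q)$, and substituting this collapses the previous display to
$$c^\mu_{\lambda_1,\lambda_2}(q)=(q-1)^{\,n+l(\mu)-l(\lambda_1)-l(\lambda_2)}\,\Phi(q)\,,\qquad \Phi(q)=\frac{\Partq{\mu}}{z_\mu\Partq{\lambda_1}\Partq{\lambda_2}}\sum_{\nu\vdash n}\frac{\langle p_{\lambda_1},h_\nu\rangle\langle p_{\lambda_2},h_\nu\rangle\langle p_\mu,m_\nu\rangle}{W_\nu(q)}\,.$$

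Finally I would specialize to $q=1$. The non-equivariant constant is the restriction $c^\mu_{\lambda_1,\lambda_2}=c^\mu_{\lambda_1,\lambda_2}(q)|_{q=1}$ (the analogue of Proposition \ref{pro:restriction1} for the product $\odot$), and since $W_\nu(q)$ has a nonzero value at $q=1$ the expression $\Phi$ lies in $\QQ[[q-1]]$. Writing $d=l(\lambda_1)+l(\lambda_2)-n-l(\mu)$, the identity $c^\mu_{\lambda_1,\lambda_2}(q)=(q-1)^{-d}\Phi(q)$ together with regularity of the left-hand side at $q=1$ forces $\Phi$ to vanish to order at least $d$, so the limit extracts precisely $\operatorname{Coeff}_{q-1}(d;\Phi)$ (and yields $0$ when $d<0$), which is the asserted formula. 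I expect the main obstacle to be the two geometric identifications underlying the computation: that the components of $\Hilb{n}^{\Tq}$ and the induced eigenbasis match the combinatorial basis $\{v_\nu\}$ with the correct normalization $c'_\nu$, and that $\Theta_q^{-1}(\O_{\Hilb{n}})=h_n$; once these are in place the rest is the plethystic bookkeeping above.
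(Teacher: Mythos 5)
Your proposal is correct and follows essentially the same route as the paper: diagonalizing $\odot_q$ on the $\Tq$-fixed-component classes, identifying the resulting eigenbasis (using the pairwise distinct eigenvalues of $\E^q_{B_n}$ and duality with $\{v_\nu\}$) as $\{m_\nu[X(q-1)]\}$, fixing the normalization through the unit $s_n=\Theta_q^{-1}(\O_{\Hilb{n}})$ to get the factor $(q-1)^n/W_\nu(q)$, and extracting $c^\mu_{\lambda_1,\lambda_2}$ by regularity at $q=1$. Your $\eu_\nu/c'_\nu$ is exactly the paper's scalar $b_\nu$ from Propositions \ref{pro:odot2} and \ref{pro:odot3}, determined there in the same way, so the two arguments coincide up to packaging.
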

Before we start the proof of the above theorem let us note a corollary and two remarks.
\begin{cor} \label{cor:coh}
	Suppose that $l(\lambda_1)+l(\lambda_2)-n-l(\mu)$ is negative, then $c^\mu_{\lambda_1,\lambda_2}=0$.
\end{cor}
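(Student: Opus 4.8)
The plan is to read the corollary as an immediate vanishing consequence of the explicit formula in Theorem \ref{tw:main2}, so the real work has already been done in establishing that theorem; the corollary only requires understanding the structure of the power series whose coefficient computes $c^\mu_{\lambda_1,\lambda_2}$. The key observation is that the target exponent $l(\lambda_1)+l(\lambda_2)-n-l(\mu)$ appears as the index of the coefficient $\operatorname{Coeff}_{q-1}$, and a coefficient of $(q-1)^m$ for $m<0$ is automatically zero provided the series in question is an honest power series in $(q-1)$, i.e. lies in $\QQ[[q-1]]$ with no negative powers.

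\emph{First} I would verify that the expression
$$
\frac{\Partq{\mu}}{z_\mu\Partq{\lambda_1}\Partq{\lambda_2}}\cdot\sum_{\nu\vdash n} \frac{\langle p_{\lambda_1},h_\nu\rangle\langle p_{\lambda_2},h_\nu\rangle\langle p_\mu,m_\nu\rangle}{W_\nu(q)}
$$
genuinely belongs to $\QQ[[q-1]]$, i.e. has no pole at $q=1$ and no principal part in the variable $(q-1)$. The quantities $\Partq{\mu}$, $\Partq{\lambda_1}$, $\Partq{\lambda_2}$ are polynomials in $q$ that are nonzero at $q=1$ (since $[\lambda_i]_q|_{q=1}=\lambda_i\neq 0$), so the prefactor is a unit in $\QQ[[q-1]]$. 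The denominators $W_\nu(q)$ were shown, in the paragraph preceding the theorem, to have a nonzero restriction at $q=1$ (because $\langle p_{(1)^n},h_\nu\rangle\neq 0$), hence each $1/W_\nu(q)$ is a well-defined element of $\QQ[[q-1]]$. Thus the whole expression is a power series in $(q-1)$ with no negative-degree terms.

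\emph{Then} the corollary is immediate: if $m:=l(\lambda_1)+l(\lambda_2)-n-l(\mu)<0$, the coefficient $\operatorname{Coeff}_{q-1}(m;W(q))$ of any element $W(q)\in\QQ[[q-1]]$ at the negative power $(q-1)^m$ vanishes identically. Applying this with $W(q)$ equal to the series above yields $c^\mu_{\lambda_1,\lambda_2}=0$, as claimed.

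\emph{The main obstacle} is purely the bookkeeping of the first step, namely confirming that no cancellation or hidden pole pushes the series out of $\QQ[[q-1]]$; once the regularity at $q=1$ of the prefactor and of each $W_\nu(q)^{-1}$ is in hand, the argument is formal. I do not expect any genuine difficulty beyond citing the nonvanishing of $W_\nu(q)|_{q=1}$ established just before the theorem statement and the elementary fact that $[\lambda_i]_q|_{q=1}=\lambda_i$.
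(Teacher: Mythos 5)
Your proposal is correct and follows exactly the paper's argument: the paper's one-line proof likewise observes that the power series in Theorem \ref{tw:main2} has a well-defined restriction at $q=1$ (equivalently, lies in $\QQ[[q-1]]$), so its coefficient at a negative power of $(q-1)$ vanishes. Your extra verification that the prefactor $\Partq{\mu}/(z_\mu\Partq{\lambda_1}\Partq{\lambda_2})$ and each $W_\nu(q)^{-1}$ are regular at $q=1$ just makes explicit what the paper leaves implicit.
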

\begin{proof}
	The power series in Theorem \ref{tw:main2} has a well-defined restriction at $q=1$.
\end{proof}
\begin{rem} \label{rem:odot_dim}
	The above corollary agrees with \cite[Theorem 1.1]{Boissiere2}. The condition that $l(\lambda_1)+l(\lambda_2)-n-l(\mu)$ is negative is equivalent to
	$$(n-l(\lambda_1))+(n-l(\lambda_2))>n-l(\mu)\,.$$
	By \cite[Theorem 1.1]{Boissiere2}, for every partition $\lambda$ the class $\ch(\Theta(p_\lambda))$ has non-zero graded components in $\coh^{2(n-l(\lambda))}(\Hilb{n})$ and is equal to zero in smaller gradations. Corollary \ref{cor:coh} may be deduced from this theorem and the fact that the cup product in cohomology preserves gradation.
\end{rem}
\begin{rem}
	 For small $n$ the formula from Theorem \ref{tw:main2} can be implemented using computer algebra software. We used SageMath to compare our results with computations of \cite[Section 9]{Boissiere}. They agree with one exception: we obtained that $p_{22}\odot p_{22}=0$ instead of $8p_4$. We believe that this is a misprint in \cite{Boissiere}, the product $p_{22}\odot p_{22}$ has to be zero due to dimensional reasons: Remark \ref{rem:odot_dim}, or \cite[second dot in Section 8.1]{Boissiere}.
\end{rem}
The rest of this section is devoted to the proof of Theorem \ref{tw:main2}. First, we study the $\Tq$-equivariant case. The tensor product in $S^{-1}_\Tq\KTh_{\Tq}(\Hilb{n})$ has an easy presentation in the fixed point basis, cf. Proposition \ref{pro:localised} (2). For a partition $\lambda$, let $[X_\lambda]$ be the class of the corresponding component of the fixed point set $\Hilb{n}^\Tq$. Then
\begin{align} \label{w:6}
	[X_\lambda]\otimes [X_\mu]=\delta_{\lambda,\mu}a_\lambda[X_\lambda] \,,
\end{align}
for some rational functions $a_\lambda \in \QQ(q)$. This observation allows us to describe the product $\odot_q$ in Propositions \ref{pro:odot2} and \ref{pro:odot3}. Next, we study the connection between the products $\odot_q$ and $\odot$ in Proposition \ref{pro:odot1}. To obtain a formula for $\odot$ we need to change the basis. We use the following elementary linear algebra lemma.
\begin{lemma} \label{lem:odot1}
	Let $V$ be a finite-dimensional vector space equipped with a non-degenerate bilinear form $\langle-,-\rangle$. Let $A$ be an endomorphism of $V$. Suppose that $\alpha_1,\dots,\alpha_k$ is an eigenbasis for $A$ with eigenvalues $a_1,\dots,a_k$. Then $\alpha^*_1,\dots,\alpha^*_k$ is an eigenbasis for the adjoint map $A^*$ with the same eigenvalues. 
\end{lemma}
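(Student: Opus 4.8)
The plan is to read $\alpha^*_1,\dots,\alpha^*_k$ as the basis dual to $\alpha_1,\dots,\alpha_k$ with respect to the given form, characterised by $\langle \alpha_i,\alpha_j^*\rangle = \delta_{ij}$, and then to verify the eigenvector property of $A^*$ directly, by pairing $A^*\alpha_j^*$ against the original basis and using the defining relation of the adjoint together with non-degeneracy.

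First I would check that such a dual basis exists, is unique, and is again a basis of $V$. Since the form is non-degenerate and $V$ is finite-dimensional, the Gram matrix $G_{ij}=\langle \alpha_i,\alpha_j\rangle$ is invertible; setting $\alpha_j^* = \sum_{l} (G^{-1})_{lj}\,\alpha_l$ gives $\langle \alpha_i,\alpha_j^*\rangle = \sum_l G_{il}(G^{-1})_{lj} = \delta_{ij}$, so $\{\alpha_j^*\}$ is determined uniquely by this property and, being obtained from $\{\alpha_l\}$ by the invertible matrix $G^{-1}$, is a basis of $V$.

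Next, using the defining relation $\langle Ax, y\rangle = \langle x, A^* y\rangle$ of the adjoint, I would compute, for all $i,j$,
$$\langle \alpha_i, A^*\alpha_j^*\rangle = \langle A\alpha_i, \alpha_j^*\rangle = a_i\,\langle \alpha_i,\alpha_j^*\rangle = a_i\delta_{ij} = a_j\delta_{ij} = \langle \alpha_i, a_j\alpha_j^*\rangle.$$
Subtracting gives $\langle \alpha_i,\, A^*\alpha_j^* - a_j\alpha_j^*\rangle = 0$ for every $i$. Since the $\alpha_i$ span $V$ and the form is non-degenerate, the vector $A^*\alpha_j^* - a_j\alpha_j^*$ pairs to zero against all of $V$ and must therefore vanish, i.e. $A^*\alpha_j^* = a_j\alpha_j^*$. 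As $\{\alpha_j^*\}$ is a basis, this exhibits it as an eigenbasis of $A^*$ with eigenvalues $a_1,\dots,a_k$, exactly as claimed.

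The only point requiring a little care, rather than a genuine obstacle, is the placement of the slots: one must pair $A^*\alpha_j^*$ in the second argument (matching the convention $\langle Ax,y\rangle = \langle x, A^*y\rangle$) and test against the $\alpha_i$ in the first argument, so that non-degeneracy can be applied on the correct side without assuming symmetry of the form. In the intended application the form is the standard symmetric inner product on $\Lambda^n$ from Definition \ref{df:w}, so this is automatic; but the argument above is valid verbatim for any non-degenerate bilinear form, symmetric or not.
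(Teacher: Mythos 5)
Your proposal is correct and follows essentially the same route as the paper: pair $A^*\alpha_j^*$ against the eigenbasis, move $A$ across using the adjoint relation, and invoke $\langle\alpha_i,\alpha_j^*\rangle=\delta_{ij}$ together with non-degeneracy. The only difference is that you spell out the existence of the dual basis via the Gram matrix and the final non-degeneracy step, which the paper leaves implicit.
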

\begin{proof}
	It is enough to show that for any $i,j\in\{1,\dots, k\}$ we have
	$\langle A^*\alpha^*_i,\alpha_j\rangle=\langle a_i\alpha^*_i,\alpha_j\rangle$. This is straightforward, as
	\[
	\langle A^*\alpha^*_i,\alpha_j\rangle=\langle \alpha^*_i,A\alpha_j\rangle=
	a_j \cdot \langle \alpha^*_i,\alpha_j\rangle =a_j\delta_{ij}=a_i\delta_{ij}= \langle a_i\alpha^*_i,\alpha_j\rangle\,.
	\] \qedhere
\end{proof}
\begin{lemma} \label{lem:odot2}
	The set $\{m_\lambda[X(q-1)]\}_{\lambda\vdash n}$ is an eigenbasis of the map $\E^q_{B_n}$.
\end{lemma}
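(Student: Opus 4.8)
The plan is to transport the eigenbasis of $\nabla^q_{B_n}$ through the adjoint and through $\omega$, and then to identify the resulting vectors plethystically. By Proposition \ref{pro:eigen}, the basis $\{v_\lambda\}_{\lambda\vdash n}=\{h_\lambda[\tfrac{X}{1-q}]\}_{\lambda\vdash n}$ of $\Lambda^n_q$ (Proposition \ref{pro:v}) is an eigenbasis of $\nabla^q_{B_n}$, with $\nabla^q_{B_n}(v_\lambda)=(B_n)^q_\lambda\, v_\lambda$. Since $\Lambda^n_q$ is finite dimensional over $\QQ(q)$ and the inner product $\langle-,-\rangle$ is non-degenerate, Lemma \ref{lem:odot1} shows that the dual basis $\{v_\lambda^*\}$ is an eigenbasis of $(\nabla^q_{B_n})^*$ with the same eigenvalues.

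Next I would unwind Definition \ref{df:restriction}. As $\omega$ is a self-adjoint involution (immediate from $\omega(p_\lambda)=(-1)^{l(\lambda)}p_\lambda$ and $\langle p_\lambda,p_\mu\rangle=\delta_{\lambda\mu}z_\lambda$), the identity $\nabla^q_F=(\omega\E^q_F\omega)^*$ rearranges to $\E^q_{B_n}=\omega(\nabla^q_{B_n})^*\omega$. Hence, if $v_\lambda^*$ is an eigenvector of $(\nabla^q_{B_n})^*$ with eigenvalue $(B_n)^q_\lambda$, then $\E^q_{B_n}(\omega v_\lambda^*)=\omega(\nabla^q_{B_n})^*v_\lambda^*=(B_n)^q_\lambda\,\omega v_\lambda^*$, and $\{\omega v_\lambda^*\}$ is again a basis because $\omega$ is invertible.

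It then remains to compute $\omega v_\lambda^*$. First I would identify the dual basis: using that the plethysm $f\mapsto f[\tfrac{X}{1-q}]$ is self-adjoint (Example \ref{ex:plethysm} (3)) together with the classical duality $\langle h_\lambda,m_\mu\rangle=\delta_{\lambda\mu}$, one checks $\langle v_\lambda, m_\mu[X(1-q)]\rangle=\langle h_\lambda,m_\mu\rangle=\delta_{\lambda\mu}$, since the composite substitution $\tfrac{X}{1-q}$ followed by $X(1-q)$ is the identity. Thus $v_\lambda^*=m_\lambda[X(1-q)]$. Expanding $m_\lambda$ in power sums and using $p_k[X(1-q)]=(1-q^k)p_k$ together with $\omega(p_\mu)=(-1)^{l(\mu)}p_\mu$, the signs combine each factor $(1-q^{\mu_i})$ into $(q^{\mu_i}-1)$, so that $\omega\big(m_\lambda[X(1-q)]\big)=m_\lambda[X(q-1)]$. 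This identifies the eigenbasis $\{\omega v_\lambda^*\}=\{m_\lambda[X(q-1)]\}$ (with eigenvalues $(B_n)^q_\lambda$); that it is indeed a basis also follows from the invertibility of the plethysm $f\mapsto f[X(q-1)]$ (Example \ref{ex:plethysm} (2)).

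The only delicate point is the bookkeeping of the plethystic and $\omega$ conventions in the last step: one must consistently use $\omega(f)=f[-X]$ in the Boissière convention $\omega(p_k)=-p_k$ and keep $q$ fixed under $\omega$, so that $X\mapsto -X$ turns $X(1-q)$ into $X(q-1)$ without introducing spurious powers of $q$. I expect this sign verification, rather than any structural step, to be the main obstacle.
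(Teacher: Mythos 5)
Your proposal is correct and follows essentially the same route as the paper: both rest on Proposition \ref{pro:eigen}, Lemma \ref{lem:odot1}, the identity $\E^q_{B_n}=\omega(\nabla^q_{B_n})^*\omega$ from Definition \ref{df:restriction}, and the plethystic duality between the $h$- and $m$-bases. The only (cosmetic) difference is the order of operations — you dualize $\{v_\lambda\}$ first to get $m_\lambda[X(1-q)]$ and then apply $\omega$, whereas the paper applies $\omega$ first to get $\{h_\lambda[\tfrac{X}{q-1}]\}$ and then checks duality with $\{m_\lambda[X(q-1)]\}$ — and your sign bookkeeping in the final step is indeed the correct one.
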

\begin{proof}
	The set $\{h_\lambda[\frac{X}{1-q}]\}_{\lambda\vdash n}$ is an eigenbasis of $\nabla^q_{1}$ by Proposition \ref{pro:eigen}. Therefore,
	$$\left\{\omega\left(h_\lambda\left[\frac{X}{1-q}\right]\right)\right\}_{\lambda\vdash n}=
	\left\{h_\lambda\left[\frac{X}{q-1}\right]\right\}_{\lambda\vdash n}$$
	is an eigenbasis of $\omega\nabla^q_{1}\omega$. We have that $(\E^q_{B})^*=\omega\nabla^q_{1}\omega$, so by Lemma \ref{lem:odot1} it is enough to prove that the bases $\{h_\lambda[\frac{X}{q-1}]\}_{\lambda\vdash n}$ and $\{m_\lambda[X(q-1)]\}_{\lambda\vdash n}$ are mutually dual. This is the case, as
	$$
	\left\langle h_\lambda\left[\frac{X}{q-1}\right], m_\mu[X(q-1)]\right\rangle=
	\left\langle h_\lambda, (m_\mu[X(q-1)])\left[\frac{X}{q-1}\right]\right\rangle=
	\left\langle h_\lambda, m_\mu\right\rangle =\delta_{\lambda\mu}\,.
	$$
	The first equality follows from Example \ref{ex:plethysm} (3),
	the second from Example \ref{ex:plethysm} (2).
\end{proof}

\begin{pro} \label{pro:odot2}
	Let $\lambda$ and $\mu$ be partitions of $n$. We have
	$$
	m_\lambda[X(q-1)]\odot_q m_\mu[X(q-1)]=\delta_{\lambda\mu}b_\lambda m_\mu[X(q-1)]
	$$
	for some rational function $b_\lambda \in \QQ(q)$.
\end{pro}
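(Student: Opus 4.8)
The plan is to exploit the fact that the fundamental classes $[X_\lambda]$ of the $\Tq$-fixed components diagonalise the tensor product, by \eqref{w:6}. Transporting through $\Theta_q$, the classes $\Theta_q^{-1}([X_\lambda])$ become orthogonal idempotents (up to scaling) for $\odot_q$. Hence it suffices to show that each $m_\lambda[X(q-1)]$ is a nonzero scalar multiple of $\Theta_q^{-1}([X_\lambda])$; the asserted diagonal form then follows immediately from \eqref{w:6}.

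To prove this proportionality I would compare the two families as eigenvectors of $\E^q_{B_n}$. On one hand, Lemma \ref{lem:odot2} gives that $\{m_\lambda[X(q-1)]\}$ is an eigenbasis of $\E^q_{B_n}$; tracing the eigenvalue of $v_\lambda$ (which is $B^q_\lambda$ by Proposition \ref{pro:eigen}) through $\omega$ and through the duality of Lemma \ref{lem:odot1}, as in the proof of Lemma \ref{lem:odot2}, shows that $m_\lambda[X(q-1)]$ has eigenvalue $B^q_\lambda$. On the other hand, since $\E^q_{B_n}=\Theta_q^{-1}\circ(-\otimes B_n)\circ\Theta_q$ and each $[X_\lambda]$ is an eigenvector of $-\otimes B_n$, the classes $\Theta_q^{-1}([X_\lambda])$ also form an eigenbasis of $\E^q_{B_n}$. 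Its eigenvalue is the restriction $i_\lambda^*B_n$; because the component $X_\lambda$ is an affine space we have $\KTh_\Tq(X_\lambda)\cong\KTh_\Tq(\pt)$, so this restriction is the scalar obtained at the $\TT$-fixed point $\lambda\in X_\lambda$, namely $B^q_\lambda$.

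The crux is then to show that these eigenvalues are pairwise distinct, so that every eigenline of $\E^q_{B_n}$ is one-dimensional and the two eigenbases must coincide up to scaling: $m_\lambda[X(q-1)]=\gamma_\lambda\,\Theta_q^{-1}([X_\lambda])$ with $\gamma_\lambda\in\QQ(q)^\times$. This is the combinatorial heart of the argument. Writing $B^q_\lambda=\sum_{i}[\lambda_i]_q=\sum_{k\ge 0}c_k\,q^k$, the coefficient $c_k$ counts the parts of $\lambda$ exceeding $k$, i.e.\ $c_k=\lambda'_{k+1}$, the $(k+1)$-st part of the conjugate partition. Since the conjugate partition determines $\lambda$, distinct partitions yield distinct polynomials $B^q_\lambda$, hence distinct eigenvalues. (Even if one did not match the two indexings, distinct eigenvalues already force $m_\lambda[X(q-1)]$ to be proportional to $\Theta_q^{-1}([X_{\tau(\lambda)}])$ for some bijection $\tau$, which suffices for the diagonal statement.)

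With the proportionality in hand, I would conclude via \eqref{w:6}: for $\lambda=\mu$,
$$
m_\lambda[X(q-1)]\odot_q m_\lambda[X(q-1)]
=\gamma_\lambda^2\,\Theta_q^{-1}\!\big([X_\lambda]\otimes[X_\lambda]\big)
=\gamma_\lambda^2 a_\lambda\,\Theta_q^{-1}([X_\lambda])
=\gamma_\lambda a_\lambda\,m_\lambda[X(q-1)]\,,
$$
so that $b_\lambda=\gamma_\lambda a_\lambda\in\QQ(q)$, while for $\lambda\ne\mu$ the orthogonality $[X_\lambda]\otimes[X_\mu]=0$ makes the product vanish. The main obstacle is the distinctness of the eigenvalues $B^q_\lambda$; once that is secured, everything else is a formal consequence of localisation and Lemma \ref{lem:odot2}.
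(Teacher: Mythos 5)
Your proof is correct and follows essentially the same route as the paper: identify $m_\lambda[X(q-1)]$ with a nonzero scalar multiple of $\Theta_q^{-1}[X_\lambda]$ by comparing the two eigenbases of $\E^q_{B_n}$ and using that its eigenvalues are pairwise distinct, then conclude from \eqref{w:6}. The only difference is that you explicitly verify the distinctness of the eigenvalues $B^q_\lambda$ (reading off the conjugate partition from the coefficients), a point the paper asserts without proof.
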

\begin{proof}
	The fundamental classes $[X_\lambda]\in \KTh^\Tq(\Hilb{n})$ form an eigenbasis of the map \hbox{$-\otimes B_n$.} Therefore, the elements $\Theta^{-1}[X_\lambda]$ form an eigenbasis of $\E^q_{B_n}$. The eigenvalues of $\E^q_{B_n}$ are pairwise different, therefore Lemma \ref{lem:odot2} implies that
	$$\Theta^{-1}[X_\lambda]=d_\lambda\cdot m_\lambda[X(q-1)]$$
	for some nonzero $d_\lambda \in \QQ(q)$. The proposition now follows from Equation \eqref{w:6}.
\end{proof}

\begin{pro} \label{pro:odot3}
	For arbitrary symmetric functions $f,g\in\Lambda_q^n$, we have
	$$
		f\odot_q g
		=
		\sum_{\mu\vdash n}p_\mu\cdot\frac{(q-1)^n}{z_\mu}\cdot\sum_{\nu\vdash n}
		\frac{\langle f,h_\nu[\frac{X}{q-1}]\rangle\langle g,h_\nu[\frac{X}{q-1}]\rangle\langle p_\mu,m_\nu[X(q-1)]\rangle}{W_\nu(q)} 
		\,.
	$$
\end{pro}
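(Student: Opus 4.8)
The plan is to exploit the diagonal form of $\odot_q$ recorded in Proposition \ref{pro:odot2} together with the duality established in the proof of Lemma \ref{lem:odot2}, thereby reducing the whole statement to the determination of a single sequence of eigenvalues. Recall that $\{m_\nu[X(q-1)]\}_{\nu\vdash n}$ and $\{h_\nu[\frac{X}{q-1}]\}_{\nu\vdash n}$ are mutually dual bases, so every $f\in\Lambda_q^n$ expands as
\[
f=\sum_{\nu\vdash n}\left\langle f, h_\nu\left[\tfrac{X}{q-1}\right]\right\rangle m_\nu[X(q-1)].
\]
By Proposition \ref{pro:odot2} there are rational functions $b_\nu\in\QQ(q)$ with $m_\lambda[X(q-1)]\odot_q m_\mu[X(q-1)]=\delta_{\lambda\mu}b_\lambda m_\mu[X(q-1)]$. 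Substituting the expansions of $f$ and $g$ and using bilinearity of $\odot_q$ collapses the double sum to its diagonal, giving
\[
f\odot_q g=\sum_{\nu\vdash n} b_\nu \left\langle f, h_\nu\left[\tfrac{X}{q-1}\right]\right\rangle\left\langle g, h_\nu\left[\tfrac{X}{q-1}\right]\right\rangle m_\nu[X(q-1)].
\]
To match the statement I would then expand each eigenvector in the power sum basis, using $m_\nu[X(q-1)]=\sum_{\mu\vdash n}\frac{\langle p_\mu, m_\nu[X(q-1)]\rangle}{z_\mu}p_\mu$, and interchange the order of summation. This reduces the entire proposition to the single identity $b_\nu=\frac{(q-1)^n}{W_\nu(q)}$.

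The computation of the eigenvalues $b_\nu$ is the heart of the argument, and I would extract them from the unit. Since $\odot_q$ is induced by the tensor product on $S^{-1}_\Tq\KTh_\Tq(\Hilb{n})$, it is commutative, associative and unital, with unit $\Theta_q^{-1}([\mathcal{O}_{\Hilb{n}}])$. For a diagonal product the unit is forced to be $\sum_\nu b_\nu^{-1}m_\nu[X(q-1)]$, so identifying the unit as a symmetric function pins down all the $b_\nu$ simultaneously. Unwinding the definition of $W_\nu(q)$ gives $\frac{(q-1)^n}{W_\nu(q)}=\langle s_n, h_\nu[\frac{X}{q-1}]\rangle^{-1}$, while the dual-basis expansion above yields $\sum_\nu \langle s_n, h_\nu[\frac{X}{q-1}]\rangle m_\nu[X(q-1)]=s_n$. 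Hence the required eigenvalues are precisely equivalent to the statement that the unit of $\odot_q$ equals $s_n$, that is $\Theta_q^{-1}([\mathcal{O}_{\Hilb{n}}])=s_n$.

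I expect this last identification to be the main obstacle, as it is the only genuinely geometric input: one must show that the structure sheaf corresponds under the $\Tq$-equivariant McKay correspondence to the Schur function $s_n=h_n$. I would prove it by localization, writing $[\mathcal{O}_{\Hilb{n}}]=\sum_\lambda e_\lambda^{-1}[X_\lambda]$ with $e_\lambda$ the K-theoretic Euler class of the normal bundle of the fixed component $X_\lambda$, and combining this with the normalization $\Theta^{-1}[X_\lambda]=d_\lambda m_\lambda[X(q-1)]$ obtained in the proof of Proposition \ref{pro:odot2}. Equivalently, one computes the self-intersection constants $a_\lambda$ defined by $[X_\lambda]\otimes[X_\lambda]=a_\lambda[X_\lambda]$ and the scalars $d_\lambda$, and verifies $b_\lambda=a_\lambda/d_\lambda=(q-1)^n/W_\lambda(q)$. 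The tangent-weight combinatorics at the $\Tq$-fixed components, which is markedly simpler than at the isolated $\TT$-fixed points, is exactly what makes these Euler classes tractable.

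Once the identity $b_\nu=(q-1)^n/W_\nu(q)$ is established, the remaining steps are the routine rearrangements indicated in the first paragraph, so no further difficulty is anticipated there. I would emphasize in the write-up that the \emph{entire} nontrivial content of the proposition is concentrated in the value of the unit, and that the dual-basis formalism of Lemma \ref{lem:odot2} is what converts this single geometric fact into the explicit power-series coefficient formula.
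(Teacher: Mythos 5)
Your reduction is exactly the one the paper uses: expand $f$ and $g$ in the basis $\{m_\nu[X(q-1)]\}_{\nu\vdash n}$ dual to $\{h_\nu[\frac{X}{q-1}]\}_{\nu\vdash n}$, invoke Proposition \ref{pro:odot2} to diagonalize $\odot_q$, re-expand in power sums, and reduce the whole statement to the identity $b_\nu=(q-1)^n/W_\nu(q)$, which you correctly recognize as equivalent to the unit of $\odot_q$ being $s_n$. Up to that point the two arguments coincide step for step.

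The divergence --- and the one genuine gap --- is in how that last fact is established. The paper disposes of it in one line: it is a standard fact that $\Theta_q(s_n)$ is the class of the trivial bundle $[\mathcal{O}_{\Hilb{n}}]$ (under the Frobenius map the trivial $\Perm_n$-representation corresponds to $h_n=s_n$, and $R\rho_*p^*\mathcal{O}_{\Hilb{n}}=R\rho_*\mathcal{O}_{X_n}=\mathcal{O}_{\CC^{2n}}$ with the trivial linearization, by Haiman's results on the isospectral Hilbert scheme); hence $s_n$ is the neutral element, and setting $g=s_n$, $f=m_\lambda[X(q-1)]$ in the diagonalized product immediately gives $b_\lambda=\langle s_n,h_\lambda[\frac{X}{q-1}]\rangle^{-1}$. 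You instead propose to compute the unit by localization, via the K-theoretic Euler classes $e_\lambda$ of the normal bundles of the components $X_\lambda\subset\Hilb{n}^{\Tq}$ together with the normalization constants $d_\lambda$ from the proof of Proposition \ref{pro:odot2}. As written this is not a closed loop: the $d_\lambda$ appear there only as unspecified nonzero scalars, nothing elsewhere in the argument determines them, and the $e_\lambda$ are never computed in the paper, so verifying $b_\lambda=a_\lambda/d_\lambda=(q-1)^n/W_\lambda(q)$ along your route would require a substantial independent computation (explicit normal weights of the $X_\lambda$ and an explicit evaluation of $\Theta_q^{-1}[X_\lambda]$) that you only sketch. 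The step you yourself flag as the main obstacle is therefore left unproved; the gap closes immediately, and much more cheaply, once the localization sketch is replaced by the standard identification $\Theta_q(s_n)=[\mathcal{O}_{\Hilb{n}}]$.
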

\begin{proof}
	Proposition \ref{pro:odot2} implies that
	\begin{align} \label{w:7}
		f\odot_q g=\sum_{\nu\vdash n}
		\left\langle f, h_\nu\left[\frac{X}{q-1}\right]\right\rangle
		\left\langle g, h_\nu\left[\frac{X}{q-1}\right]\right\rangle
		b_\nu \cdot m_\nu[X(q-1)]\,
	\end{align}
	for some $b_\nu \in \QQ(q)$. 
	It is a standard fact that $\Theta_q(s_n)$ is the trivial bundle, see Proposition \ref{pro:sn}. 
	Therefore $s_n$ is the neutral element for $\odot_q$. For $g=s_n$ and $f=m_\lambda[X(q-1)]$, Equation \eqref{w:7} gives
	$$m_\lambda[X(q-1)]=
	\sum_{\nu\vdash n}
	\delta_{\lambda\nu}
	\left\langle s_n, h_\nu\left[\frac{X}{q-1}\right]\right\rangle
	b_\nu \cdot m_\nu[X(q-1)]\,.
	$$
	It follows that $$b_\lambda=\left\langle s_n; h_\lambda\left[\frac{X}{q-1}\right]\right\rangle^{-1}=\frac{(q-1)^n}{W_\lambda(q)}\,.$$
	Moreover, $m_\nu[X(q-1)]=\sum_{\mu\vdash n} \langle p_\mu,m_\nu[X(q-1)]\rangle \frac{p_\mu}{z_\mu}\,. $ After substituting these two equations into \eqref{w:7} we obtain the desired formula.
\end{proof}

\begin{pro} \label{pro:odot1}
	Suppose that the symmetric functions $f, g\in \Lambda_q^n$ have well-defined restrictions at $q=1$. Then $f \odot_q g$ also has a well-defined restriction at $q=1$. Moreover,
	$$ (f \odot_q g)_{|q=1}=f_{|q=1} \odot g_{|q=1}\,.$$
\end{pro}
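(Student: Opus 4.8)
Suppose $f,g\in\Lambda_q^n$ have well-defined restrictions at $q=1$. Then $f\odot_q g$ has a well-defined restriction at $q=1$, and $(f\odot_q g)_{|q=1}=f_{|q=1}\odot g_{|q=1}$.

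The plan is to compare the two products through a compatibility of the Frobenius/McKay identifications $\Theta_q$ and $\Theta$ under the restriction $q=1$, which geometrically is the further restriction from $\Tq$-equivariant K-theory to non-equivariant K-theory. First I would set up, exactly as in Proposition \ref{pro:restriction1} and the corollary following Theorem \ref{tw:B1}, the statement that $\Theta_q$ and $\Theta$ are compatible with the specialisation $q=1$: if $x\in\Lambda_q^n$ has a well-defined restriction $x_{|q=1}\in\Lambda^n$, then $\Theta_q(x)$ restricts to $\Theta(x_{|q=1})$ under the natural map $S^{-1}_\Tq\KTh_\Tq(\Hilb{n})\to\KTh(\Hilb{n})$ (specialising $q=1$). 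This is the analogue, for the pair $(\Theta_q,\Theta)$, of the $t=1$ compatibility already invoked repeatedly in Section~\ref{s:McKay}, and it follows from the functoriality of $\rho_*\circ p^*$, the Frobenius map, and the inclusion of the origin with respect to restricting the torus action.

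Granting that compatibility, the argument is formal. By definition $f\odot_q g=\Theta_q^{-1}(\Theta_q(f)\otimes\Theta_q(g))$ and $f_{|q=1}\odot g_{|q=1}=\Theta^{-1}(\Theta(f_{|q=1})\otimes\Theta(g_{|q=1}))$. The tensor product in $\KTh(\Hilb{n})$ is the specialisation at $q=1$ of the tensor product in $S^{-1}_\Tq\KTh_\Tq(\Hilb{n})$, because the latter is a $\KTh_\Tq(\pt)=\QQ[q^{\pm1}]$-algebra map and $\otimes$ commutes with base change along $q=1$. Hence, using the compatibility on $f$ and on $g$ to write $\Theta_q(f)_{|q=1}=\Theta(f_{|q=1})$ and likewise for $g$, the product $\Theta_q(f)\otimes\Theta_q(g)$ restricts at $q=1$ to $\Theta(f_{|q=1})\otimes\Theta(g_{|q=1})$. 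Applying $\Theta_q^{-1}$ and invoking compatibility once more, now in the form that $\Theta_q^{-1}$ of a class with well-defined restriction specialises to $\Theta^{-1}$ of the restricted class, gives that $f\odot_q g$ has a well-defined restriction at $q=1$ equal to $f_{|q=1}\odot g_{|q=1}$.

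The main obstacle is the very first step: establishing cleanly that taking the class at a $\Tq$-fixed component and then specialising $q=1$ lands one in the non-equivariant theory compatibly with $\Theta$, i.e. that the localisation denominators introduced by $S^{-1}_\Tq$ do not obstruct the restriction. One must check that the relevant classes $\Theta_q(f),\Theta_q(g)$ and their tensor product lie in the (non-localised) image where $q=1$ makes sense; this is exactly guaranteed by the hypothesis that $f,g$ have well-defined restrictions together with the fact, established via Proposition \ref{pro:localised} and the explicit eigenvalues $a_\lambda,b_\lambda$ appearing in Equation \eqref{w:6} and Proposition \ref{pro:odot2}, that the structure constants in the $m_\lambda[X(q-1)]$-basis are genuine elements of $\QQ(q)$ with controlled poles. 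A slicker alternative, avoiding explicit geometry, is to argue purely algebraically: one shows directly from the closed formula of Proposition \ref{pro:odot3} that every coefficient of $f\odot_q g$ is a rational function in $q$ regular at $q=1$ whenever $f,g$ are, and that its value there is computed by the same formula with the $q=1$ specialisations of $f$ and $g$; this reduces the whole statement to the regularity at $q=1$ of $W_\nu(q)^{-1}$, which holds because $W_\nu(q)$ has non-zero restriction at $q=1$ as already noted before Theorem \ref{tw:main2}.
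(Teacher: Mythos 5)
Your main argument is correct and is essentially the paper's own proof, which compresses it to a single line: the analogous restriction property at $q=1$ holds for the tensor products in $S^{-1}_{\Tq}\KTh_{\Tq}(\Hilb{n})$ and $\KTh(\Hilb{n})$, and conjugating by the compatible isomorphisms $\Theta_q$ and $\Theta$ transports it to $\odot_q$ and $\odot$, exactly as in your first two paragraphs. One caution about your proposed ``slicker alternative'': the formula of Proposition \ref{pro:odot3} cannot be specialised term by term (each $h_\nu\left[\frac{X}{q-1}\right]$ is singular at $q=1$), and it would at best give regularity of $f\odot_q g$ at $q=1$, not the identification of the value with $f_{|q=1}\odot g_{|q=1}$, since $\odot$ is defined through $\Theta$ --- so the geometric compatibility of your first paragraph cannot actually be avoided.
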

\begin{proof}
	An analogous property is satisfied by the products in $S_\Tq^{-1}\KTh^\Tq(\Hilb{n})$ and $\KTh(\Hilb{n})$.
\end{proof}

\begin{proof}[Proof of Theorem \ref{tw:main2}]
	Proposition \ref{pro:odot3} and Example \ref{ex:plethysm} (4)
	imply that the formula for the equivariant structure constants is the following:
	\begin{align*}
		c^\mu_{\lambda_1,\lambda_2}(q)
		&=(q-1)^{n+l(\mu)-l(\lambda_1)-l(\lambda_2)}\frac{\Partq{\mu}}
		{z_\mu\Partq{\lambda_1}\Partq{\lambda_2}}\cdot\sum_{\nu\vdash n} \frac{\langle p_{\lambda_1},h_\nu\rangle\langle p_{\lambda_2},h_\nu\rangle\langle p_\mu,m_\nu\rangle}{W_\nu(q)}.
	\end{align*}
	By Proposition \ref{pro:odot3} the above expression has a well-defined restriction at $q=1$. 
\end{proof}

\appendix
\section{Equivariant McKay correspondence} \label{s:appendix}
\begin{df}
	Let $k$ be a field and $V$ be a $k$-vector space of finite dimension. The restriction
	$$\res_{q=1}\colon V\otimes_k k(q) \dashrightarrow V$$
	is defined on a submodule $V\otimes k[q]_\mathfrak{m}$, where $\mathfrak{m}$ is the ideal spanned by $(q-1)$.
\end{df}
\begin{df}
	Let $k$ be a field, $V$ and $W$ be $k$-vector spaces of finite dimension and $\psi:V\to W$ be a $k$-linear map. We say that a $k(q)$-linear map $\psi_q:V\otimes_k k(q)\to W\otimes_k k(q)$ restricts to $\psi$ if the following diagram commutes:
	$$
	\begin{tikzcd}
		{V\otimes_k k(q)} & {W\otimes_k k(q)} \\
		V & W
		\arrow["{\psi_q}", from=1-1, to=1-2]
		\arrow["\psi", from=2-1, to=2-2]
		\arrow["{\res_{q=1}}"', dashed, from=1-1, to=2-1]
		\arrow["{\res_{q=1}}", dashed, from=1-2, to=2-2]
	\end{tikzcd},
$$
in the sense that if for $\vv\in V\otimes_k k(q)$ the restriction $\res_{q=1}\vv$ is defined, then the restriction $\res_{q=1}\psi_q(\vv)$ is also defined and equal to $\psi(\res_{q=1}\vv)$.
\end{df}
\begin{lemma}
	Let $k$ be a field, let $V$ and $W$ be $k$-vector spaces of finite dimension, and let \hbox{$\psi:V\to W$} be a $k$-linear map. Let $\psi_q:V\otimes_k k(q)\to W\otimes_k k(q)$ be a $k(q)$-linear map which restricts to $\psi$. Suppose that $\psi$ is an isomorphism. Then $\psi_q$ is also isomorphism.
\end{lemma}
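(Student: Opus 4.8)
The plan is to reduce the statement to a determinant computation carried out in coordinates. First I would fix a $k$-basis $v_1,\dots,v_n$ of $V$ and a $k$-basis $w_1,\dots,w_n$ of $W$; note that $n=\dim_k V=\dim_k W$ precisely because $\psi$ is an isomorphism. These bases induce $k(q)$-bases of $V\otimes_k k(q)$ and $W\otimes_k k(q)$, so $\psi_q$ becomes a square matrix over $k(q)$ and it suffices to prove that this matrix is invertible. The key observation is that each $v_i$ lies in $V\otimes_k k[q]_{\mathfrak m}$ and satisfies $\res_{q=1}v_i=v_i$, so the restriction is defined on every basis vector.

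Next I would read off the coefficient matrix. Writing $\psi_q(v_i)=\sum_j A_{ji}(q)\,w_j$ with $A_{ji}(q)\in k(q)$, I apply the hypothesis that $\psi_q$ restricts to $\psi$ to the vector $v=v_i$. Since $\res_{q=1}v_i$ is defined, the definition guarantees that $\res_{q=1}\psi_q(v_i)$ is also defined and equals $\psi(v_i)$. Being defined forces every entry $A_{ji}(q)$ into the localization $k[q]_{\mathfrak m}$, i.e. to be regular at $q=1$; and the equality $\res_{q=1}\psi_q(v_i)=\psi(v_i)$ identifies the evaluation $A(1)$ with the matrix of $\psi$ in the chosen bases.

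Then I would finish with a one-line determinant argument. Because every entry of $A(q)$ is regular at $q=1$, the polynomial $\det A(q)$ is regular at $q=1$ as well, and its value there is $\det A(1)=\det(\text{matrix of }\psi)$, which is nonzero since $\psi$ is an isomorphism. An element of $k(q)$ that is regular and nonvanishing at $q=1$ is a nonzero, hence invertible, element of the field $k(q)$. Therefore $A(q)\in\mathrm{GL}_n(k(q))$, and $\psi_q$ is an isomorphism.

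I expect no serious obstacle here; the only point that requires a little care is the bookkeeping that translates the coordinate-free hypothesis ``$\psi_q$ restricts to $\psi$'' into the concrete two facts that the coefficient matrix is regular at $q=1$ and that its evaluation there equals the matrix of $\psi$. Once this is in place, the conclusion rests solely on the elementary fact that nonvanishing of a rational function at a point implies its invertibility in $k(q)$.
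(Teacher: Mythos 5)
Your proof is correct, and it takes a somewhat different route from the paper's. You choose bases of both $V$ and $W$, observe that the hypothesis applied to the basis vectors $v_i\otimes 1$ (whose restrictions at $q=1$ are defined) forces the coefficient matrix $A(q)$ of $\psi_q$ to have entries in $k[q]_{\mathfrak m}$ with $A(1)$ equal to the matrix of $\psi$, and conclude by specializing the determinant: $\det A(q)$ is regular at $q=1$ with nonzero value $\det A(1)$, hence nonzero in $k(q)$. (Minor wording point: $\det A(q)$ is an element of $k[q]_{\mathfrak m}$, not literally a polynomial, but the argument is unaffected.) The paper instead argues by contradiction that the images $\psi_q(v_i\otimes 1)$ are $k(q)$-linearly independent: given a hypothetical dependence relation, it normalizes the coefficients by $(q-1)^{-t}$ where $t$ is the minimal $(q-1)$-adic valuation, so that applying $\res_{q=1}$ produces a nontrivial $k$-linear relation among the $\psi(v_i)$, contradicting injectivity of $\psi$; invertibility then follows from equality of dimensions. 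Both arguments are elementary and both exploit the same input, namely that the restriction hypothesis controls $\psi_q$ on the basis $v_i\otimes 1$. Your determinant version is slightly more explicit (it exhibits $\psi_q^{-1}$ via $A(q)^{-1}$ and needs no valuation bookkeeping), while the paper's version avoids choosing a basis of $W$ and isolates the normalization trick, which is the reusable idea when one wants to pass statements between the generic and special fibres.
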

\begin{proof}
	Let $v_1,\dots,v_k$ be a basis of $V$. Then $v_1\otimes 1,\dots,v_k\otimes 1$ is a $k(q)$-basis of $V\otimes k(q)$. It is enough to prove that the vectors $\psi_q(v_1\otimes 1),\dots,\psi_q(v_k\otimes 1)$ are $k(q)$-linearly independent. Suppose otherwise that there exist rational functions $a_k(q) \in k(q)$ such that
	\begin{align} \label{w:2}
		\sum_{i=1}^{k} a_k(q)\psi_q(v_i\otimes 1)=0\,. 
	\end{align}
	Without loss of generality we can assume that all $a_k(q)$ are non-zero. Let $\val_{q-1}$ be the valuation associated with $(q-1)$ and
	$$
	t=\min\{\val_{q-1}(a_1(q)), \dots, \val_{q-1}(a_k(q))\}\,.
	$$
	 Multiplying Equation \eqref{w:2} by $(q-1)^{-t}$ and applying $\res_{q=1}$ yields a non-trivial $k$-linear relation between elements $\res_{q=1}(\psi_q(v_i\otimes 1))$. The map $\psi_q$ restricts to  $\psi$, therefore $$\res_{q=1}(\psi_q(v_i\otimes 1))=\psi(v_i)\,.$$ This contradicts that $\psi$ is an  isomorphism.
\end{proof}
\begin{proof} [Proof of Corollary \ref{cor:eq}]
	 Let $p$ and $\rho$ denote the maps from the diagram \eqref{diagram2}. Let $\rho_*$, $\rho^\Tq_*$, $\rho^\TT_*$, $p^*$, $p^*_\Tq$, $p^*_\TT$ denote the induced maps on K-theory, $\Tq$-equivariant K-theory and $\TT$-equivariant K-theory, respectively. \\
	 Set $k=\QQ\,, V=\KTh(\Hilb{n})\,, W=\KTh_{\Perm_n}(\pt)$. We have
	 $$V\otimes_k k(q)=S^{-1}_\Tq\KTh_{\Tq}(\Hilb{n})\,, \qquad W\otimes_k k(q)=S^{-1}_\Tq\KTh_{\Tq\times\Perm_n}(\pt)\,. $$
	 The  map $\rho^{\Tq}_*\circ p_{\Tq}^*$ restricts to $\rho_*\circ p^*$ which is an isomorphism. Therefore $\rho^{\Tq}_*\circ p_{\Tq}^*$ is also an isomorphism. \\
	 Set $k=\QQ(q)\,, V=S^{-1}_\Tq\KTh_{\Tq}(\Hilb{n})\,, W=S^{-1}_\Tq\KTh_{\Tq\times\Perm_n}(\pt)$. We have
	 $$V\otimes_k k(t)=S^{-1}_\TT\KTh_{\TT}(\Hilb{n})\,, \qquad W\otimes_k k(t)=S^{-1}_\TT\KTh_{\TT\times\Perm_n}(\pt)\,. $$
	 The  map $\rho^{\TT}_*\circ p_{\TT}^*$ restricts to $\rho^{\Tq}_*\circ p_{\Tq}^*$ which is an isomorphism. Therefore $\rho^{\TT}_*\circ p_{\TT}^*$ is also an isomorphism.
\end{proof}

%\bibliography{Hilb2}
%\bibliographystyle{alphaabbr} 

\end{document}